\documentclass{amsart}
\usepackage[margin=1.2in]{geometry}
\usepackage{amssymb}
\usepackage{amscd}
\usepackage[all]{xy}
\usepackage{bbm}
\usepackage{mathrsfs}
\usepackage{enumerate}
\usepackage{tikz}
\usepackage{stmaryrd}
\usepackage{etoolbox}
\usepackage{array}
\usepackage{verbatim}
\usepackage{hyperref}
\usepackage{stmaryrd}
\usepackage{bbding}

\newcommand{\Z}{\mathbb{Z}}

\newcommand{\R}{\mathbb{R}}

\newcommand{\eps}{\varepsilon}

\newcommand{\del}{\nabla}

\newcommand{\lap}{\Delta}

\newcommand{\bd}{\partial}
\newcommand{\cl}{\overline}

\newcommand{\tr}{\operatorname{tr}}

\newcommand{\la}{\langle}
\newcommand{\ra}{\rangle}

\renewcommand{\div}{\operatorname{div}}
\newcommand{\weak}{\rightharpoonup}

\newcommand{\grad}{\del}

\newcommand{\an}{\operatorname{An}}

\theoremstyle{plain}
\newtheorem{theorem}{Theorem}
 
\newtheorem{corollary}[theorem]{Corollary}
\newtheorem{prop}[theorem]{Proposition}
\newtheorem{lemma}[theorem]{Lemma}
\newtheorem{conj}[theorem]{Conjecture}

\theoremstyle{definition}
\newtheorem{defn}[theorem]{Definition}
\newtheorem{rem}[theorem]{Remark}

\author{Liam Mazurowski}
\address{Department of Mathematics, Lehigh University, Bethlehem, Pennsylvania, 18015, United States}
\email{lim624@lehigh.edu}

\author{Xuan Yao}
\address{Department of Mathematics, Princeton University, Princeton, NJ 08540}
\email{xy1216@princeton.edu}

\title{A Positive Mass Theorem for Continuous Metrics}

\begin{document}

\begin{abstract}
    Let $g$ be a continuous metric on $\R^3$ which is asymptotically flat in the sense that $\vert g_{ij}(x) - \delta_{ij}\vert = O(\vert x\vert^{-\tau})$ for some $\tau > \frac 1 2$. Further assume that $g$ can be uniformly approximated on compact sets by smooth metrics with almost non-negative scalar curvature. For such a metric $g$, we define a synthetic ADM mass $m(g)$ using harmonic functions. The harmonic mass $m(g)$ coincides with the usual ADM mass whenever $g$ is smooth and decays rapidly enough that the latter is defined. The harmonic mass can also be computed as a limit of the $C^0$ local mass introduced by Burkhardt-Guim.  Our main result is a positive mass theorem: the harmonic mass satisfies $m(g)\ge 0$ and if $m(g) = 0$ then $g$ is flat.  
\end{abstract}

\maketitle

\section{Introduction} 
Although scalar curvature is defined using two derivatives of the metric, there is strong evidence to suggest that non-negativity of the scalar curvature is essentially a $C^0$ property.  For example, Gromov \cite{gromov2014dirac,gromov2019four} showed that non-negative scalar curvature is preserved under $C^0$ convergence of smooth manifolds to a smooth limit.  Gromov's proof relies on the non-existence of mean convex cubes with acute dihedral angles in manifolds with non-negative scalar curvature.  In fact, angles and mean convexity can be defined for $C^0$ metrics, and the associated prism rigidity theorem \cite{brendle2024scalar,li2020polyhedron,wang2021gromov} may be taken as a possible definition of non-negative scalar curvature for $C^0$ metrics.  Bamler \cite{bamler2016ricci} gave another proof of Gromov's $C^0$-convergence theorem using Ricci flow, and  Burkhardt-Guim \cite{burkhardt2019pointwise} extended these ideas to define a synthetic notion of  non-negative scalar curvature for $C^0$ metrics based on Ricci flow. Also see the  work of Lee \cite{lee2026quantification} building on Bamler's approach. We  refer to \cite{mazurowski2026quantification} and \cite{mazurowski2026scalar} for recent extensions of Gromov's $C^0$-convergence theorem proven using harmonic functions and $\mu$-bubbles, respectively. 

The positive mass theorem \cite{schoen1979proof,schoen1981proof,witten1981new} asserts that a smooth asymptotically flat manifold $(M,g)$ with non-negative scalar curvature must have non-negative ADM mass. Moreover, there is an associated rigidity statement: if the mass is zero then $M$ is flat.  The asymptotically flat condition in this theorem requires that $g$ converges suitably quickly in $C^1$ to the Euclidean metric at infinity, and indeed this type of decay is required to even define the classical ADM mass \cite{bartnik1986mass}.  In light of the above discussion, it is natural to wonder whether there is an analog of the positive mass theorem for continuous metrics. 

In this paper, we consider continuous metrics $g$ on $\R^3$ which are asymptotically flat in the sense that $\vert g_{ij}(x) - \delta_{ij}\vert = O(\vert x\vert^{-\tau})$ for some $\tau > \frac 1 2$. We further require that $g$ has non-negative scalar curvature in the sense of approximations. This means that $g$ can be uniformly approximated on compact sets by smooth metrics with almost non-negative scalar curvature. For any such metric $g$, we define a synthetic ADM mass $m(g)$ using harmonic functions. See Definition \ref{defn-harmonic-mass-I} for the precise formulation.    Our main result is the following positive mass theorem: 

\begin{theorem}
\label{main-theorem}
Assume that $g$ is a continuous asymptotically flat metric on $\R^3$ with non-negative scalar curvature in the sense of approximations. Then the harmonic mass $m(g)$ is a well-defined number in $[0,\infty]$. Moreover, if $m(g) = 0$ then $g$ is flat. 
\end{theorem}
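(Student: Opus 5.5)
The plan is to use the harmonic-function mass formula of Bray--Kazaras--Khuri--Stern (BKKS) for smooth approximants and pass to the limit. Fix a continuous asymptotically flat $g$ and smooth metrics $g_k \to g$ uniformly on compact sets with $R(g_k) \ge -\eps_k$, $\eps_k \to 0$. I expect Definition~\ref{defn-harmonic-mass-I} to express $m(g)$ as a limit over large radii $r$ of a quantity computed from an asymptotically linear $g$-harmonic function $u$, say $u \sim x_1$. Because $g$ is only $C^0$, $u$ is merely a weak solution of a divergence-form equation with continuous coefficients. Still, De Giorgi--Nash--Moser and $L^p$ theory give $u \in W^{2,p}_{loc}$ only if $g\in W^{1,p}$, so I would instead rely on $C^{1,\alpha}$-type control from continuity of coefficients (Cordes/Campanato: $u\in W^{1,p}$ for all $p$, $C^{0,\alpha}$).

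The key identity is the BKKS integral formula on a smooth manifold. For $u$ harmonic on the region $\Omega_r = \{ |x|<r\}$,
\[
\int_{\partial \Omega_r} \Big(\partial_\nu |\nabla u| - \ldots\Big) \ge \frac12\int_{\Omega_r}\Big(\frac{|\nabla^2 u|^2}{|\nabla u|} + R|\nabla u|\Big).
\]
Equivalently, in the level-set form,
\[
\int \frac{|\nabla^2 u|^2}{|\nabla u|}+R|\nabla u| \le 8\pi\cdot(\text{boundary flux term}) - \ldots
\]
For $g_k$ I would build $g_k$-harmonic $u_k$ on large balls $B_{r}$ with boundary data $x_1$ and apply the formula. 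The left side is then $\ge -\eps_k\int|\nabla u_k|$, and the boundary term is exactly the quantity that defines the local/harmonic mass (presumably Burkhardt-Guim's $C^0$ local mass). The boundary term should be expressible using only zeroth-order information, via integration by parts against cutoffs in an annulus $r<|x|<2r$, so it passes to the limit $k\to\infty$ under $C^0$ convergence. Here $u_k \to u$ in $W^{1,2}$ and $C^{0,\alpha}$, by stability of weak solutions under uniform convergence of coefficients. This gives that the local mass in annuli is bounded below by $-o(1)$. Sending $r\to\infty$ and using the decay $\tau>\frac12$ shows the mass is well defined (possibly $+\infty$) and $\ge 0$. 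The condition $\tau > 1/2$ enters in showing that the annular quantities form an almost monotone, hence convergent, family: the error terms have size $r^{1-2\tau}\to 0$.

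For rigidity, suppose $m(g)=0$. The lower-semicontinuous quantity $\int_{\Omega_r}|\nabla^2 u_k|^2/|\nabla u_k|$ then tends to zero as $k\to\infty$ for each fixed $r$, for three choices of asymptotically linear data $x_1,x_2,x_3$. I would then show that the limits $u^1,u^2,u^3$ give an isometry. The maps $U_k=(u_k^1,u_k^2,u_k^3)$ satisfy $\nabla^2 U_k\to 0$ in a weighted $L^2$ sense and $\nabla U_k$ is close to orthonormal at infinity. Hence $g_k$ is close to $U_k^*\delta$ in $L^1_{loc}$, and in the limit $U^*\delta = g$ a.e. with $U$ a $W^{1,p}$ bi-Lipschitz homeomorphism. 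Since $g$ is continuous, $U$ is then a metric isometry onto $\R^3$, so $g$ is flat.

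The main obstacle is this rigidity step, especially handling the weight $1/|\nabla u_k|$ near critical points and proving that $\nabla u_k^i$ converge strongly rather than weakly. First I would use the Hessian smallness to obtain local $W^{1,2}$-precompactness of $\nabla u_k$, together with a Kato-type bound on $\nabla|\nabla u_k|$ showing that $|\nabla u_k|$ is nearly constant, so nondegenerate. From that I would derive that $\langle\nabla u_k^i,\nabla u_k^j\rangle_{g_k}\to\delta_{ij}$ in $L^1_{loc}$.
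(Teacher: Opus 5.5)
There is a genuine gap, and it sits at the step you describe as ``should be expressible using only zeroth-order information.'' First, the statement concerns the paper's $m(g)$. That is the normalized limit of $rD(r)$, with $D(r)=c_\psi+\int\theta(r,u)|\nabla^g u|^3\,dv_g$ for a Green's function $u$ with a pole. A BKKS-type quantity built from asymptotically linear functions is a different object, so even a completed version of your plan would still need an identification of the two.

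More seriously, the plan does not complete. The BKKS boundary term $\int\partial_\nu|\nabla u_k|$ is second order in $u_k$. If you average it against a radial cutoff $\chi$ and integrate by parts to take the derivative off $|\nabla u_k|$, you get $\int|\nabla u_k|\,\partial_i\bigl(\sqrt{\det g_k}\,g_k^{ij}\partial_j\chi\bigr)\,dx$. This contains $\partial g_k$ and has no limit when $g_k\to g$ only uniformly. The Euler-characteristic and boundary geodesic-curvature terms hidden in your ``$\ldots$'' (for level sets meeting $\partial B_r$) are individually of leading order and have the same problem. So ``local mass $\ge -C\eps_k$'' cannot be passed to $k\to\infty$.

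The paper avoids this by averaging over level sets of the Green's function itself. By Proposition \ref{ac-gradient}, the mean-curvature term of the AMO function $F_k$ is a $t$-derivative of $\int_{\{u_k=1/t\}}|\nabla u_k|^2$. Hence a $t$-average of $F_k$ is the purely first-order quantity $rD_k(r)$ (Proposition \ref{DvsF}), which is continuous under $W^{1,p}$ convergence for $p>3$. The almost-monotonicity $F_k(t_2)-F_k(t_1)\ge -C\eps_k$ then survives the limit.

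Second, your scheme has no monotone quantity, and nothing that yields positivity or smallness at a fixed scale. Your $u_k$ solve Dirichlet problems on $B_r$ and change with $r$, so nothing shows that $\lim_{r\to\infty}$ exists. The $r^{1-2\tau}$ remark is not a substitute: in the paper, $\tau>\frac12$ is not what makes the limit exist. It makes the quadratic remainder in the linearization of $\mathcal D$ negligible, which gives the coordinate formula and hence independence of $u$ and $\psi$.

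In the paper, one fixed Green's function makes $r\mapsto rD(r)$ non-decreasing. Positivity comes from the \emph{small}-scale end: since $g$ is continuous at the pole, $ru(rx)\to|x|^{-1}$ in $W^{1,p}$, so $rD(r)\to 0$ as $r\to 0$. The same sandwich drives rigidity: $m(g)=0$ forces $D\equiv 0$ at every scale. That is what makes the stability integral of the smooth approximants vanish on a fixed region. Your claim that $\int_{\Omega_r}|\nabla^2u_k|^2/|\nabla u_k|\to 0$ for fixed $r$ has no such source.

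Within the rigidity step there are two further problems. A small $g_k$-Hessian does not give $W^{1,2}$-precompactness of $\nabla u_k$, because the Christoffel symbols of $g_k$ are unbounded. Only scalar quantities such as $\la\nabla u_k^i,\nabla u_k^j\ra_{g_k}$ can be controlled, through a Poincar\'e inequality uniform in $k$; the paper does exactly this with $|\nabla u_k|/u_k^2$. Also, $DU^TDU=g$ a.e.\ does not by itself give injectivity: $(|x_1|,x_2,x_3)$ satisfies it for $g=\delta$. The paper gets injectivity from $\rho_y(x)=|W(x)-W(y)|$. That identity comes from varying the pole, proving $\lap_g(\rho_y^2)=6$ across the pole, and classifying $g$-harmonic functions of subquadratic growth.
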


\begin{rem}
Since $g$ is only continuous, one needs to be slightly careful with the definition of flatness.  Here we mean that there is a bi-Lipschitz homeomorphism $W\colon\R^3\to \R^3$ such that $W \colon (\R^3,d_{g}) \to (\R^3,d_{\text{euc}})$ is an isometry of metric spaces. Examples show that this bi-Lipschitz regularity for $W$ is optimal \cite{calabi1970smoothness}.
\end{rem}

The harmonic mass $m(g)$ coincides with the usual ADM mass whenever $g$ is smooth and decays rapidly enough that the latter is well-defined.  We can also show that the harmonic mass is computable as a limit of the $C^0$ local mass introduced by Burkhardt-Guim \cite{burkhardt2024adm}. In particular, there is a formula for the harmonic mass purely in terms of the metric coefficients near infinity. 

\begin{theorem} \label{theorem-coordinates} Let $g$ be a continuous asymptotically flat metric on $\R^3$ with non-negative scalar curvature in the sense of approximations. 
Let $\an(r) = B(0,4r)-B(0,r)$. There is a function $\eta  \in C^\infty_c((1,4))$  (that does not depend on $g$) such that the harmonic mass is given by
    \begin{align*}
        m(g) &= c \lim_{r\to \infty}\bigg[ \frac 1 r \int_{\an(r)} \left(\frac{1}{\vert x\vert} \eta\left(\frac{\vert x\vert}{r}\right) + \frac 1 r \eta'\left(\frac{\vert x\vert}{r}\right) \right)  \delta^{ij}(g_{ij}-\delta_{ij}) \,dx  \\
&\qquad \qquad  + \frac 1 r \int_{\an(r)} \left(\frac{1}{\vert x\vert} \eta\left(\frac{\vert x\vert}{r}\right) - \frac 1 r \eta'\left(\frac{\vert x\vert}{r}\right)\right) (g_{ij}-\delta_{ij}) \frac{x^ix^j}{\vert x\vert^2}\,dx\bigg].
\end{align*} 
Here $c = c(\eta) > 0$ is a fixed normalization constant. 
\end{theorem}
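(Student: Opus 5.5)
The plan is to compare, at each scale $r$, the bracketed expression in Theorem \ref{theorem-coordinates} (call it $\mathcal{M}_\eta(g,r)$) with the scale-$r$ quantity whose limit defines the harmonic mass in Definition \ref{defn-harmonic-mass-I}, and to show that the two differ by an error that tends to zero as $r \to \infty$. I would organize this in three steps: a smooth computation, a passage to continuous metrics by approximation, and a comparison of the two limits. I do not have the exact form of Definition \ref{defn-harmonic-mass-I} in front of me, so I take it to be a limit over annuli of a quantity built from harmonic (coordinate-like) functions that is stable under $C^0$ approximation on compact sets; Step 2 is stated on that assumption.

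\textbf{Step 1 (smooth metrics).} First I would treat a smooth metric whose decay is fast enough that the ADM mass is defined. Write the ADM flux on the sphere of radius $s$ as $\int_{|x|=s}(\partial_j g_{ij}-\partial_i g_{jj})\frac{x^i}{|x|}$. Average this over $s$ with the weight $\frac1r\eta(s/r)$, normalized so that $\int \eta = 1$; this turns the flux into a volume integral over $\an(r)$. Then integrate by parts so that every derivative falls on the vector field $\frac{1}{r}\eta(|x|/r)\,x^i/|x|$. A direct computation of $\partial_j\big(\eta(|x|/r)x^i/|x|\big)$ gives the terms $\frac{1}{|x|}\eta$, $\frac1r\eta'$ and $\frac{x^ix^j}{|x|^2}$, which produces exactly the two integrands in the statement, with $c$ absorbing $16\pi$ and the normalization of $\eta$. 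Because the spherical fluxes converge to the ADM mass, their weighted average does too. Hence $c\,\mathcal{M}_\eta(g,r) \to m_{ADM}(g)$, which equals $m(g)$ by the consistency statement for smooth metrics.

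\textbf{Step 2 (continuous metrics).} Next I would handle a general continuous $g$ with non-negative scalar curvature in the sense of approximations. Let $g_k \to g$ uniformly on compact sets, with $R(g_k) \ge -\eps_k$. For each fixed $r$, the quantity $\mathcal{M}_\eta(\cdot,r)$ is an integral of $g-\delta$ over a compact annulus, so it is $C^0$-continuous and $\mathcal{M}_\eta(g_k,r)\to\mathcal{M}_\eta(g,r)$. The harmonic-mass quantity at scale $r$ should likewise converge as $k\to\infty$, since harmonic functions on a fixed compact region depend continuously on $g$ in $C^0$ by De Giorgi--Nash--Moser and $W^{1,2}$ stability; this is where the assumed form of Definition \ref{defn-harmonic-mass-I} enters. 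It therefore suffices to prove a comparison for each smooth $g_k$ that is uniform in $k$.

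\textbf{Step 3 (comparison; main obstacle).} The comparison I would aim for is
\[
\big|\text{(harmonic quantity at scale } r) - c\,\mathcal{M}_\eta(g_k,r)\big| \le C\big(r^{1-2\tau} + \eps_k r^{\alpha} + o_k(1)\big),
\]
with $C$ and the $O(|x|^{-\tau})$ constants independent of $k$. To prove it, I would pass to harmonic coordinates $u^1,u^2,u^3$ of $g_k$ on $\an(r)$. Under this change of variables, the linear expression $\mathcal{M}_\eta$ changes only by the integral of a divergence, which vanishes because $\eta$ is compactly supported, plus terms quadratic in $g-\delta$ and in $\nabla(u-x)$. Since $|g-\delta| = O(r^{-\tau})$ and the annulus has volume of order $r^3$, these quadratic terms after the $r^{-2}$ normalization are of size $r^{3-2}\cdot r^{-2\tau}=r^{1-2\tau}$, which tends to zero exactly because $\tau>\frac12$. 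The hard part is controlling $\nabla(u-x)$ in $L^2$ with only $C^0$ closeness of the metric and no derivative bounds. I would first try energy estimates for $\Delta_{g_k}(u-x) = \div$(something of size $|g_k-\delta|$). Integrating this against the $\eta$-cutoff makes every error quadratic in $g-\delta$ and $\nabla(u-x)$, so no derivatives of $g_k$ appear and the estimate is uniform in $k$. Letting $k\to\infty$ and then $r\to\infty$ completes the argument.
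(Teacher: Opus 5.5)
Your proposal has a genuine gap: it never uses what the harmonic mass actually is, so nothing connects the bracket $\mathcal M_\eta(g,r)$ to $m(g)$. The harmonic mass is a normalization of $\lim_{r\to\infty} rD(r)$, where $D(r)=c_\psi+\int\theta(r,u)\vert\nabla^g u\vert^3\,dv_g$ is built from a Green's function $u$ with a pole, not from harmonic coordinates.

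Your Step 1 computation is correct: the bracket is exactly the weak form of the ADM flux averaged with weight $\frac1r\eta(s/r)$. But you then invoke ``harmonic mass $=$ ADM mass for smooth metrics.'' The paper does not prove this independently; it is essentially a corollary of the theorem you are proving, combined with your Step 1. So the step is circular. Step 1 also cannot feed into Step 2. The approximants in Definition \ref{defn-non-negative-scalar} live only on balls and have no decay at infinity. Moreover, the bound $R(g_k)\ge-\eps_k$ plays no role in the identity; it is only used to show that $\lim rD(r)$ exists. Step 3 then compares $\mathcal M_\eta$ with an unspecified ``harmonic quantity,'' so the central claim is never actually formulated.

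A further warning sign is that your argument, if it worked, would give the formula for every $\eta$ with $\int\eta=1$. Take $\int\zeta=0$. Then, distributionally, $\mathcal M_\zeta(g,r)=-\int\phi_r\,\dot R(g-\delta)\,dx$, with $\phi_r(x)=\int_0^{\vert x\vert/r}\zeta$ and $\dot R(h)=\partial_i\partial_j h_{ij}-\Delta\tr h$ the linearized scalar curvature. Nothing in your argument forces this to vanish when only $\vert g-\delta\vert=O(\vert x\vert^{-\tau})$ is known. The theorem holds for a specific $\eta$ produced by the structure of $D$.

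The missing ingredients are exactly the paper's proof.

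\textbf{Decay rate.} First, you need a rate for the Green's function at infinity: $\Vert ru(rx)-\vert x\vert^{-1}\Vert_{W^{1,p}(\an)}=O(r^{-\beta})$ with $\beta>\frac12$ and $p>3$ (Proposition \ref{prop-decay}). This comes from the global equation $\Delta(u-\vert x\vert^{-1})=\div((I-A)\nabla u)$ on $\R^3$ via the weighted estimates of Appendix \ref{Appendix-Mapping}, not from local energy estimates on $\an(r)$.

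\textbf{Linearization.} Second, write $D(r)=\mathcal D(g_r,u_r)$ and use Fréchet differentiability of $\mathcal D$ at $(\delta,\vert x\vert^{-1})$ in $C^0\times W^{1,p}$ with quadratic remainder. After multiplying by $r$ the remainder is $O(r^{1-2\beta})\to 0$; this is where $\tau>\frac12$ enters.

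\textbf{Eliminating the $u_r-\rho$ term.} Third, remove the part of the linearization that is linear in $u_r-\rho$. It equals $\int\Theta(\vert x\vert)\,r(u_r-\rho)\,dx$. One solves $\Delta\chi=\Theta$ with $\chi$ radial and compactly supported in $\an$. This is possible only because $\int_0^\infty t^2\Theta\,dt=\int_0^\infty t\Theta\,dt=0$ for this particular $\theta$. Writing $A_r=I+B_r$ and testing the weak equation $\div(A_r\nabla u_r)=0$ against $\chi$ turns the term into $\int\langle\nabla\chi,rB_r\nabla\rho\rangle\,dx$ plus an error $O(r^{1-\tau-\beta})=o(1)$. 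The result is linear in $g_r-\delta$, and collecting terms yields the bracket with $\eta(s)=-\frac12 s^{-5}\varphi(s^{-1})+\frac14 s^{-1}\chi'(s)$.

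Your instinct that testing against a compactly supported cutoff makes the errors quadratic is the right one for this third step. It has to be applied to the Green's function equation, together with the first two ingredients.
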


\begin{rem} 
Up to  normalization and shifting the annulus, the quantity inside the brackets on the right hand side above is exactly the $C^0$ local mass $M_{C^0}(g,\text{Id},\eta,r)$ defined by Burkhardt-Guim \cite[Definition 2.1]{burkhardt2024adm}. However, it is not quite clear whether the harmonic mass coincides with Burkhardt-Guim's global mass which is defined to be 
\[
\lim_{r\to\infty} M_{C^0}(g,\text{Id},\eta^r,r)
\]
for a certain family of functions $\eta^r$ converging smoothly to $\eta$ as $r\to \infty$. In Appendix \ref{Appendix-eta} we calculate an explicit formula for $\eta$, and we  show that 
\[
\lim_{r\to \infty} M_{C^0}(g,\text{Id},\eta^r,r) = \lim_{r\to\infty} M_{C^0}(g,\text{Id},\eta,r)
\]
when the decay rate satisfies $\tau > \frac 2 3$. This answers \cite[Question 2]{burkhardt2024adm} for metrics $g$ on $\R^3$ satisfying the faster decay rate $\tau > \frac 2 3$. 
\end{rem}

\begin{rem}
    The harmonic mass is manifestly independent of any choice of asymptotically flat coordinate system since it can be defined purely in terms of a harmonic function $u$ which is intrinsic to the $g$ metric. 
\end{rem}

\begin{rem} There is no conceptual obstruction to extending the results of this paper to complete manifolds $M^3$ satisfying $H_2(M;\Z)=0$, or to asymptotically flat manifolds with boundary satisfying $H_2(M,\bd M; \Z) = 0$.  We plan to address this in future work. 
\end{rem} 

As a further application of these results, we can resolve a conjecture of Gromov \cite[Section 3.11]{gromov2019four} on metrics with non-negative scalar curvature that decay rapidly at infinity. 

\begin{conj}[Euclidean $C^0$-Rigidity Conjecture (a)] 
\label{gromov-conj} Assume that $g$ is a smooth metric on $\R^3$ with non-negative scalar curvature. If $g$ satisfies $\vert g_{ij}(x)-\delta_{ij} \vert = o(\vert x\vert^{-1})$ then $g$ is flat. 
\end{conj}

Indeed,  if $g$ satisfies $\vert g_{ij}(x) - \delta_{ij}\vert = o(\vert x\vert^{-1})$ then the formula in Theorem \ref{theorem-coordinates} immediately implies that $m(g) = 0$. Conjecture \ref{gromov-conj} then follows from the rigidity statement in Theorem \ref{main-theorem}. In fact, we can even obtain the conclusion when $g$ is merely continuous rather than smooth. 

\begin{corollary}
\label{gromov-corollary}
    Assume that $g$ is a continuous metric on $\R^3$ with non-negative scalar curvature in the sense of approximations. If $g$ satisfies $\vert g_{ij}(x) - \delta_{ij}\vert = o(\vert x\vert^{-1})$ then $g$ is flat. 
\end{corollary}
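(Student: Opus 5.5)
The plan is to combine Theorem \ref{theorem-coordinates} with the rigidity part of Theorem \ref{main-theorem}.

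First, check that $g$ satisfies the hypotheses of those theorems. Since $\vert g_{ij}(x)-\delta_{ij}\vert = o(\vert x\vert^{-1})$, in particular $\vert g_{ij}-\delta_{ij}\vert = O(\vert x\vert^{-\tau})$ for every $\tau<1$; fix, say, $\tau = \frac 3 4 > \frac 1 2$. So $g$ is a continuous asymptotically flat metric on $\R^3$, and by assumption it has non-negative scalar curvature in the sense of approximations. Hence the harmonic mass $m(g)\in[0,\infty]$ is well defined, and Theorem \ref{theorem-coordinates} applies.

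Next, show that the limit in Theorem \ref{theorem-coordinates} is zero. Fix the function $\eta\in C^\infty_c((1,4))$ and set $C_\eta = \sup(\vert\eta\vert+\vert\eta'\vert)$. On $\an(r)$ we have $r\le \vert x\vert\le 4r$. Hence each integrand is bounded by a constant times
\[
\Big(\tfrac{1}{\vert x\vert} + \tfrac 1 r\Big)\, C_\eta \sum_{i,j}\vert g_{ij}-\delta_{ij}\vert \le \frac{2C_\eta}{r}\,\eps(r)\,\frac{1}{r},
\]
where $\eps(r) := \sup_{\vert x\vert\ge r} \vert x\vert \sum_{i,j}\vert g_{ij}(x)-\delta_{ij}\vert \to 0$ as $r\to\infty$. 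We have also used $\vert x^ix^j/\vert x\vert^2\vert\le 1$. Since $\mathrm{vol}(\an(r)) \le C r^3$, each bracketed term is bounded by
\[
\frac 1 r \cdot C r^3 \cdot \frac{C\,\eps(r)}{r^2} = C\,\eps(r) \to 0 .
\]
Therefore $m(g) = c\cdot 0 = 0$.

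Finally, the rigidity statement of Theorem \ref{main-theorem} gives that $g$ is flat, in the sense of the bi-Lipschitz isometry described in the remark following that theorem.

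There is no real obstacle here. The only point to watch is the scaling bookkeeping: the $o(\vert x\vert^{-1})$ decay exactly cancels the factor $r^{-1}\cdot r^{-1}\cdot r^3 = r$, leaving an $o(1)$ quantity. The substance of the corollary lies entirely in the two main theorems.
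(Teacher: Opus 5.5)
Your proposal is correct and follows the paper's argument: the paper also reduces the corollary to showing that the coordinate expression in Theorem \ref{theorem-coordinates} vanishes. It uses the same bound $C r \sup_{\an(r)}\vert g_{ij}-\delta_{ij}\vert = o(1)$ and then invokes the rigidity statement of Theorem \ref{main-theorem}. Your explicit check that the $o(\vert x\vert^{-1})$ hypothesis makes $g$ asymptotically flat is a detail the paper leaves implicit.
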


\begin{rem}
    We would like to note that Conjecture \ref{gromov-conj} (for smooth metrics) can also be deduced as a  corollary of the very nice isoperimetric mass rigidity theorem of Benatti-Fogagnolo-Mazzieri  \cite[Theorem 2.14]{benatti2025isoperimetric}. See \cite{you2026gromov} for another proof of Conjecture \ref{gromov-conj} (for smooth metrics) by extending the results in an earlier version of this work. 
\end{rem}

\subsection{Discussion} There is great interest in extending the positive mass theorem below the classical $C^2$ regularity threshold. Many different approaches to this problem have been considered.   One approach studies Sobolev metrics with non-negative scalar curvature in the sense of distributions. In this direction, Lee and LeFloch \cite{lee2015positive} proved a positive mass theorem with rigidity for $C^0 \cap W^{1,n}$ metrics using techniques from spin geometry. 

Another approach is based on Huisken's isoperimetric mass \cite{huisken2006isoperimetric}. The isoperimetric mass is appealing because it is easy to define on manifolds with only $C^0$ regularity.  However, it was observed by Jauregui, Lee, and Unger \cite{jauregui2024note} that the isoperimetric mass is always non-negative for elementary reasons, and that this does not require non-negativity of the scalar curvature.  Nevertheless, there is still an interesting rigidity question: if a manifold has zero isoperimetric mass does this imply that it is flat? For smooth metrics with $C^0$ decay, this isoperimetric rigidity question was  resolved in the affirmative by Bennati-Fogagnolo-Mazzieri \cite{benatti2025isoperimetric}. Their approach is based on the weak inverse mean curvature flow. In this direction, we note that Fogagnolo-Gatti-Pluda \cite{fogagnolo2026scalar} have just announced work extending the weak inverse mean curvature flow to continuous metrics.
Finally, we mention the related isocapacitary mass introduced by Jauregui \cite{jauregui2024adm}, and note that it also has applications to the low regularity setting \cite{benatti2023nonlinear}. 

A third approach is based on the Ricci flow. Burkhardt-Guim \cite{burkhardt2019pointwise} defined a synthetic notion of scalar curvature lower bounds for $C^0$ metrics using the Ricci flow.  Then in \cite{burkhardt2024adm}, Burkhardt-Guim defined a mass for $C^0$ asymptotically flat manifolds with non-negative scalar curvature in the Ricci flow sense, and showed that this mass is a well-defined geometric quantity.  It is still an open problem whether this mass must be non-negative in general \cite[Question 2]{burkhardt2024adm}.  

In this paper, we consider continuous metrics which have non-negative scalar curvature in the sense of approximations. Again, this means that the metric  can be uniformly approximated on compact sets by smooth metrics with almost non-negative scalar curvature; see Definition \ref{defn-non-negative-scalar}. This is a relatively general notion of non-negative scalar curvature for continuous metrics. For example, any continuous metric $g$ with non-negative scalar curvature in the Ricci flow sense also satisfies this approximation property \cite[Lemma 7.2]{burkhardt2019pointwise}.  To the authors' knowledge, Theorem \ref{main-theorem} is the first positive mass theorem with rigidity for continuous metrics.

\subsection{Sketch of Proof} The ideas in this paper are based on the harmonic function method for studying the  scalar curvature of metrics with $C^0$ control developed by the authors in \cite{mazurowski2026quantification}.  Assume that $g$ is a continuous asymptotically flat metric on $\R^3$.   It is well-known that such a metric $g$ admits a Green's function $u$ for $\lap_g$ with a pole at any fixed point $x_0$. We study a quantity $D(r)$ associated to $u$ that captures information about the harmonic function $u$ and its gradient on the set $\{1/(4r) < u < 1/r\}$. This quantity $D(r)$ depends continuously on $u$ in $W^{1,p}$ for $p > 3$ and is therefore stable under small $C^0$ changes to the metric. When $g$ is smooth, $D(r)$ can be re-written as an integral of the $F$ function introduced in \cite{agostiniani2024green}. By the monotonicity formula for the $F$ function \cite{agostiniani2024green}, it follows that $r\mapsto r D(r)$ is almost non-decreasing when $g$ is smooth with almost non-negative scalar curvature. Passing to a limit, it follows that $r\mapsto r D(r)$ is non-decreasing whenever $g$ is continuous with non-negative scalar curvature in the sense of approximations. 

Now suppose $g$ is a continuous asymptotically flat metric on $\R^3$ with non-negative scalar curvature in the sense of approximations. By the above monotonicity, there is a well-defined limit 
\[
m(g) = c \lim_{r\to \infty}rD(r) \in (-\infty,\infty]
\]
which we call the {\it harmonic mass} of $g$. Here $c$ is a normalization constant chosen so that $m(g)$ coincides with the usual ADM mass when $g$ is smooth and decays suitably rapidly at infinity. By linearizing $D$ near infinity, we can show that $m(g)$ can be expressed purely in terms of the metric coefficients $g_{ij}$ near infinity. In particular, this implies that the harmonic mass $m(g)$ does not depend on any choices in the above construction. To show that $m(g)\ge 0$, we prove an asymptotic expansion for the Green's function $u$ near the pole $x_0$. More precisely, we show that (after a linear change of coordinates) the rescaled functions $r u(r(x-x_0))$ converge to $\vert x\vert^{-1}$ in $W^{1,p}$ on a fixed annulus as $r\to 0$. This is sufficient to show that $D(r) \to 0$ as $r\to 0$ and therefore that $rD(r) \ge 0$ for all $r$ by monotonicity. 

It remains to prove the rigidity when $m(g) = 0$. This is the most delicate part of the argument. By monotonicity, the equality $m(g) = 0$ implies that $D(r) = 0$ for all $r$. On a fixed compact annulus $\an$ around the pole, we choose smooth approximating metrics $g_k$ and approximating harmonic functions $u_k$. Then $D_k(r)\to D(r)$ as $k\to \infty$ and it follows that  $D_k(r_2) - D_k(r_1) \to 0$ as $k\to \infty$ for any fixed $r_1< r_2$. By the monotonicity formula for $F_k$ in the smooth case, this implies that 
\begin{equation}
\label{stable-eqn}
\int_{a}^{b} \int_{\{u_k=1/t\}} \frac{\vert \grad^{g_k,T} \vert \grad^{g_k} u_k\vert \vert^2}{\vert \grad^{g_k}u_k\vert^2} + \left(H_k - \frac{2\vert \grad^{g_k} u_k\vert}{u_k}\right)^2\, da_k\, dt \to 0
\end{equation}
where $\grad^{g_k,T}$ is the tangential gradient on $\{u_k=1/t\}$ and $H_k$ is the mean curvature of $\{u_k=1/t\}$. Next, observe that the metrics $g_k$ have uniform Poincaré inequalities since $g_k \to g$ uniformly. Hence there are constants $c_k$ such that 
\[
\int_{\an} \left\vert \frac{\vert \grad^{g_k} u_k\vert}{u_k^2} - c_k\right\vert\, dv_k \le C \int_{\an} \left\vert \grad^{g_k}\left(\frac{\vert \grad^{g_k} u_k\vert}{u_k^2}\right)\right\vert\, dv_k,
\]
where, crucially, $C$ does not depend on $k$. Estimate \eqref{stable-eqn} implies that the right hand side goes to 0 as $k\to \infty$, and so we can pass to a limit to deduce that 
\[
\vert \grad^g u\vert = cu^2
\]
almost-everywhere. Then using $D(r) = 0$, one can check that necessarily $c = 1$. 

Next, consider $\rho = u^{-1}$. The above calculations imply that $\vert \grad^g \rho\vert = 1$ almost-everywhere and that $\lap_g(\rho^2) = 6$ in the weak sense everywhere on $\R^3$ (even across the pole). We then vary the choice of the pole. Write $\rho_y$ for the $\rho$ function associated to a Green's function with pole at $y$. Then for any $y_1$ and $y_2$, we have 
\[
\lap_g(\rho_{y_1}^2 - \rho_{y_2}^2) = 0
\]
in the weak sense on all of $\R^3$. Moreover,  $\rho_{y_1}^2 - \rho_{y_2}^2$ has sub-quadratic growth at infinity.  Since $g$ is asymptotically flat, we can show that the space of $g$-harmonic functions with  sub-quadratic growth at infinity has a basis $\{1,w_1,w_2,w_3\}$ where $w_i$ is a $g$-harmonic function asymptotic to the coordinate function $x^{i}$. Proving this requires a thorough understanding of the analytic properties of the equation $\lap w = \div(X)$ on $\R^3$. In the literature, this equation is usually studied in conjunction with the Helmholtz decomposition; see for example \cite{farwig1997weighted}.  

Once this is established, we have 
\[
\rho_{y_1}^2 - \rho_{y_2}^2 = b_0 + b_1 w_1 + b_2 w_2 + b_3 w_3
\]
for some constants $b_i$ (depending on $y_1$ and $y_2$). Define a Lipschitz map $W\colon \R^3\to \R^3$ by $W(x) = (w_1(x),w_2(x),w_3(x))$. Using the fact that the above identity holds for all $y_1$ and $y_2$, one can check that necessarily 
\begin{gather*}
    g(\grad^g w_i,\grad^g w_j) = \delta_{ij},\quad 
    \rho_y(x) = \vert W(x)-W(y)\vert,
\end{gather*}
almost-everywhere. Finally, we use these properties to show that $W$ is a homeomorphism, that the inverse $W^{-1}$ is also Lipschitz, and finally that $W\colon(\R^3,d_g) \to (\R^3,d_{\text{euc}})$ is an isometry of metric spaces.

\subsection{Organization} The remainder of the paper is organized as follows. In Section \ref{section-prelim}, we define asymptotically flat metrics with non-negative scalar curvature in the sense of approximations. Then we discuss some background on Green's functions for such metrics and define the quantity $D(r)$ and the harmonic mass $m(g)$.  In Section \ref{section-monotonicity}, we prove that $r\mapsto rD(r)$ is non-decreasing when $g$ has non-negative scalar curvature in the sense of approximations. In Section \ref{section-pos}, we show the harmonic mass is non-negative. In Section \ref{section-choices}, we prove Theorem \ref{theorem-coordinates} and deduce that the harmonic mass is independent of any choices used in the construction. Finally, in Section \ref{section-rigidity} we complete the proof of Theorem \ref{main-theorem} by demonstrating the rigidity of metrics satisfying $m(g) = 0$. We then give the application to Gromov's Euclidean $C^0$-Rigidity Conjecture (Corollary \ref{gromov-corollary}).  The paper concludes with four appendices. In Appendix \ref{D-appendix}, we derive the linearization of $\mathcal D$ near infinity which is needed in the proof of Theorem \ref{theorem-coordinates}.  In Appendix \ref{Appendix-F}, we recall the basic properties of the $F$ function introduced in \cite{agostiniani2024green}. In Appendix \ref{Appendix-Mapping}, we discuss the analytic properties of the equation $\lap w = \div(X)$ on $\R^3$.  Finally, in Appendix \ref{Appendix-eta} we explore the relation between the harmonic mass and Burkhardt-Guim's global mass. 

\subsection{Acknowledgments}
We would like to thank Xin Zhou for his interest in this work. We are grateful to Mattia Fogagnolo for bringing several helpful references to our attention after the first version of this work appeared. 

\section{Preliminaries} 
\label{section-prelim}

In this section, we define continuous asymptotically flat metrics as well as the notion of non-negative scalar curvature in the sense of approximations. Then we discuss some preliminary results on the Laplace Green's function for such metrics. Finally, we give the definition of the harmonic mass $m(g)$.

\begin{defn} 
Let $g$ be a continuous metric on $\R^3$. We say that $g$ is {\it asymptotically flat} if 
\[
\vert g_{ij}(x) - \delta_{ij}\vert = O(\vert x\vert^{-\tau})
\]
for some $\tau > \frac 1 2$. 
\end{defn} 

\begin{defn}
\label{defn-non-negative-scalar}
Let $g$ be a continuous asymptotically flat metric on $\R^3$.  We say that $g$ has {\it non-negative scalar curvature in the sense of approximations} if for each $r > 0$ there is a sequence of smooth metrics $g_k$ on $B(0,r)$ such that 
\begin{itemize}
    \item[(i)] $g_k \to g$ uniformly on $B(0,r)$, and 
    \item[(ii)] $R(g_k)\ge - \eps_k$ on $B(0,r)$
\end{itemize}
for some numbers $\eps_k\to 0$. 
\end{defn}

\begin{rem}
    We have chosen to use a pointwise lower bound in (ii) for simplicity, but this condition is not sharp. For example, one can check that the arguments of the paper still hold assuming only an $L^p$ bound 
    \[
    \int_{B(0,r)} \vert R(g_k)_-\vert ^p \, dv_{g_k} \le \eps_k 
    \]
    for some $p > 1$. 
    Here $R(g_k)_-$ is the negative part of $R(g_k)$. 
\end{rem}

\subsection{Green's Functions}
Let $g$ be a continuous asymptotically flat metric on $\R^3$.  Let $u$ be a Green's function for $\lap_g$ with pole at some point $x_0\in \R^3$. It is well-known that such a function exists and satisfies two-sided bounds 
\[
\frac{C_1}{\vert x-x_0\vert} \le u(x) \le \frac{C_2}{\vert x-x_0\vert}
\]
for some constants $0 < C_1 < C_2$; see  for example \cite[Theorem (1.1)]{gruter1982green}. Moreover, $u$ belongs to $W^{1,p}_{\text{loc}}(\R^3 - \{x_0\})$ for all $1\le p < \infty$.  In particular, $u$ is continuous. By the co-area formula for Sobolev functions, the flux 
\[
\int_{\{u = 1/t\}} \vert \grad^g u\vert\, d\mathcal H^2_g
\]
is well-defined for almost every $t$. Moreover, this quantity is independent of $t$, and we call it the flux of $\grad u$ over a level set. We will always assume $u$ is normalized so that the flux of $\grad u$ over a level set is $4\pi$.

\subsection{The Harmonic Mass} Assume that $g$ is a continuous asymptotically flat metric on $\R^3$ with non-negative scalar curvature in the sense of approximations. 
Let $\psi$ be a smooth, non-negative bump function which is compactly supported in $(0,1)$. We will always assume that $\psi$ is not identically zero.  
Then define 
\begin{equation}
\label{eqn-theta}
\theta(r,t) = \frac{1}{t^3} \left[\frac 1 2 \psi\left(\frac{1}{2r t} -1\right) - \psi\left(\frac{1}{rt}-1\right)\right]. 
\end{equation}
We now define the $D$ function associated $u$ and $\psi$.  

\begin{defn} \label{defn-D} Let $u$ be a Green's function for $\lap_g$ with pole at $x_0$.  
For $r > 0$, define   $D(r)$ by 
\[
D(r) = c_\psi + \int \theta(r,u) \vert \grad^g u\vert^3\, dv_g
\]
where 
\[
c_\psi = 2\pi \int_0^1 \frac{\psi(s)}{1+s}\, ds
\]
is a constant that depends only on the choice of $\psi$.  
\end{defn} 

We note that $D(r)$ of course depends on both $u$ and $\psi$ but we will suppress this in the notation except where needed for clarity.  We also note that $\theta(r,u)$ vanishes outside the set $\{1/(4r) < u < 1/r\}$ since $\psi$ is supported in $(0,1)$.
In Section \ref{section-monotonicity}, we show that $r\mapsto rD(r)$ is non-decreasing.  This allows us to use $D(r)$ to define the harmonic mass. 

\begin{defn}
\label{defn-harmonic-mass-I}
Assume that $g$ is a continuous asymptotically flat metric on $\R^3$ which has non-negative scalar curvature in the sense of approximations. Let $u$ be a Green's function for $\lap_g$ and let $\psi$ be a smooth non-negative bump function supported in $(0,1)$.  Define the harmonic mass 
\[
m(g,u,\psi) = \left[3\pi \int_0^1 \frac{\psi(s)}{(1+s)^2}\, ds\right]^{-1} \lim_{r\to \infty} rD(r) \in (-\infty,\infty]
\]
where $D(r)$ is the function in Definition \ref{defn-D}.
\end{defn}

A priori, the harmonic mass $m(g,u,\psi)$ may depend on both the choice of $u$ and $\psi$.  In Section \ref{section-choices}, we will show that the value $m(g,u,\psi)$ is in fact independent of both $u$ and $\psi$. Thus we can make the following definition. 

\begin{defn}
    Assume that $g$ is a continuous asymptotically flat metric on $\R^3$ which has non-negative scalar curvature in the sense of approximations. The harmonic mass $m(g)$ is defined to be $m(g,u,\psi)$ for any choice of Green's function $u$ and bump function $\psi$. 
\end{defn}

\section{Monotonicity} \label{section-monotonicity}

Let $g$ be a continuous asymptotically flat metric on $\R^3$ with non-negative scalar curvature in the sense of approximations. Let $u$ be a Green's function for $\lap_g$ with pole at $x_0$. Fix a smooth, non-negative bump function $\psi$ which is supported in $(0,1)$ and consider the quantity $D(r)$ associated to $u$ and $\psi$.  The goal of this section is to prove the following monotonicity theorem. 

\begin{theorem}
\label{theorem-monotonicity} 
The function $r\mapsto rD(r)$ is non-decreasing. 
\end{theorem}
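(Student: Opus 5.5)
We prove the statement first for smooth metrics with almost non-negative scalar curvature on a large ball and then pass to the limit using the $W^{1,p}$-stability of $D$. The plan has three steps.

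\textbf{Step 1: rewrite $D(r)$ via the co-area formula (smooth case).} Assume first that $g$ is smooth near the relevant region. Apply the co-area formula with respect to $t = 1/u$, so that $\{u = 1/t\}$ are the level sets and $du = -t^{-2}\,dt$. Then
\[
\int \theta(r,u)\vert \grad u\vert^3\,dv = \int_0^\infty \frac{1}{t^2}\,\theta(r,1/t)\int_{\{u=1/t\}} \vert\grad u\vert^2\,da\,dt .
\]
Using $\theta(r,1/t) = t^3\big[\tfrac12\psi(\tfrac{t}{2r}-1) - \psi(\tfrac tr - 1)\big]$, this becomes a weighted integral in $t$ of
\[
t\int_{\{u=1/t\}}\vert\grad u\vert^2\,da = \frac{1}{t}\int_{\{u=1/t\}}\frac{\vert\grad u\vert^2}{u^2}\,da .
\]
This is essentially the quantity entering the Agostiniani--Mazzieri--Oronzio $F$ function. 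With the flux normalized to $4\pi$, one has $F(t) = 4\pi t - t^2\int_{\{u=1/t\}}\vert\grad u\vert^2\,da$, up to normalization, as recalled in Appendix \ref{Appendix-F}. The term $c_\psi$ is designed to cancel the leading $4\pi t$ contribution: by the substitution $s = t/r - 1$, the integrals $\int \tfrac{1}{t}\cdot 4\pi t\cdot(\ldots)\,dt$ reduce to $2\pi\int\psi(s)/(1+s)\,ds$ terms.

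After the change of variables $s = t/(2r)-1$ and $s = t/r - 1$, I expect to get
\[
rD(r) = c\int_0^1 \psi(s)\left[\frac{F(2r(1+s))}{2r(1+s)^2}\cdot(\ldots)\right]ds ,
\]
or more plainly, $rD(r)$ is a positive combination, with weights depending on $\psi(s)$ and $s$, of the differences $F(2r(1+s)) - F(r(1+s))$ divided by appropriate powers. I will track the constants so that $rD(r) = \int_0^1 \psi(s)\,w(s)\,\big[F(2r(1+s))-F(r(1+s))\big]\,ds$ with $w>0$, or an analogous expression in terms of $F(t)/t$.

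\textbf{Step 2: monotonicity in the smooth case.} For smooth $g$ with $R\ge 0$, the monotonicity formula for $F$ from \cite{agostiniani2024green} gives
\[
F'(t) = \int_{\{u=1/t\}}\left[\frac{\vert\grad^T\vert\grad u\vert\vert^2}{\vert\grad u\vert^2} + \tfrac12\Big(H - \frac{2\vert\grad u\vert}{u}\Big)^2 + \tfrac12 R\right] + (\text{topological term}),
\]
with the topological term non-negative because the level sets in $\R^3$ have no genus obstruction ($H_2=0$). Differentiating the expression from Step 1 in $r$ then shows that $\tfrac{d}{dr}(rD(r))$ is a positive average of $F'$ over $t\in[r,4r]$, so it is $\ge 0$ up to the $R$ term. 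When $R\ge -\eps$ instead, the error is bounded by $C\eps\int_{\{1/(4r)<u<1/r\}}\vert\grad u\vert\ldots$, which is $\le C(r)\eps$ on a fixed ball. Hence
\[
r_2D(r_2) - r_1D(r_1) \ge -C\eps .
\]

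\textbf{Step 3: approximation, the main obstacle.} Fix $r_1<r_2$, choose $R$ large, and take $g_k\to g$ uniformly on $B(0,R)$ with $R(g_k)\ge-\eps_k$. The smooth metrics are only defined locally, so one cannot simply take their global Green's functions. Instead, let $u_k$ solve $\lap_{g_k}u_k=0$ on a large annular region around the pole with boundary data $u$, plus an appropriate treatment near the pole: either use a local Green's function of $g_k$ on a ball with matching boundary data, or glue $g_k$ to $g$ smoothed outside. The smooth argument applies to $u_k$ only on the region $\{1/(4r_2)<u_k<1/r_1\}$. The monotonicity of $F$ is local in $t$ once the flux is constant and the level sets are regular enough; regular values are handled by Sard's theorem together with the standard $\vert\grad u\vert^{-1}$ regularization of \cite{agostiniani2024green}. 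Uniform ellipticity together with $C^0$ convergence of the coefficients gives $u_k\to u$ in $W^{1,p}$ on compact sets away from the pole for some $p>3$. This follows from Meyers/$L^p$ theory for divergence-form equations with continuous coefficients. Since $D$ involves $\vert\grad u\vert^3$ against a bounded compactly supported weight, $D_k(r)\to D(r)$. Letting $k\to\infty$ in $r_2D_k(r_2)-r_1D_k(r_1)\ge -C\eps_k$ yields the theorem.

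The delicate part is ensuring that the flux normalization and the non-negativity of the topological term survive for the locally defined $u_k$, in particular that the relevant level sets of $u_k$ are closed surfaces enclosing the pole. I would first try handling this by keeping $u_k$ a genuine Green's function of a global metric that equals $g_k$ on $B(0,R)$ and is asymptotically flat outside.
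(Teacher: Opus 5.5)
Your overall architecture matches the paper's: replace $u$ by the $g_k$-harmonic function $u_k$ on a fixed annulus with boundary values $u$, get $W^{1,p}_{\mathrm{loc}}$ convergence from Meyers' estimate, prove $r_2D_k(r_2)-r_1D_k(r_1)\ge -C\eps_k$, and let $k\to\infty$. The gap is in Step 1, which is where the monotonicity actually comes from.

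\textbf{The form of $F$.} The $F$ function is
\[
F(t) = 4\pi t - t^2\int_{\{u=1/t\}} H\vert\nabla u\vert\,da + t^3\int_{\{u=1/t\}}\vert\nabla u\vert^2\,da,
\]
not $4\pi t - t^2\int_{\{u=1/t\}}\vert\nabla u\vert^2\,da$. The mean-curvature term cannot be normalized away. Since $D$ involves no $H$, the co-area formula only produces $G(t)=\int_{\{u=1/t\}}\vert\nabla u\vert^2\,da$.

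\textbf{The missing identity.} What you need is the first-variation identity $G'(t)=-t^{-2}\int_{\{u=1/t\}}H\vert\nabla u\vert\,da$ (Proposition~\ref{ac-gradient}). Equivalently, $F(t)t^{-3}=4\pi t^{-2}+\frac{d}{dt}\big(tG(t)\big)$. With $\lambda=r(1+s)$, the difference $\frac12\psi(\frac{t}{2r}-1)-\psi(\frac{t}{r}-1)$ built into $\theta$ then becomes
\[
2\lambda G(2\lambda)-\lambda G(\lambda)=\int_\lambda^{2\lambda}\big(F(t)t^{-3}-4\pi t^{-2}\big)\,dt .
\]
The $4\pi t^{-2}$ part is cancelled exactly by $c_\psi$, and one obtains
\[
rD(r)=\int_0^1\frac{\psi(v)}{(1+v)^2}\left[\int_1^2\frac{F((1+v)rw)}{w^3}\,dw\right]dv.
\]
This is a positive, scale-covariant average of $F$ itself, so its $r$-derivative is a positive average of $F'$.

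\textbf{Why your anticipated form fails.} A positive combination of differences $F(2r(1+s))-F(r(1+s))$ (or of $F(t)/t$) has the wrong structure. For non-decreasing $F$ it only gives $rD(r)\ge 0$. Its $r$-derivative contains the term $-(1+s)F'(r(1+s))$, so your Step 2 claim does not follow from Step 1 as you set it up. The differencing happens at the level of $tG(t)$, an antiderivative of $F/t^3-4\pi/t^2$, not at the level of $F$.

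\textbf{Two lesser points.} First, non-negativity of the topological term is not a genus issue. One has $4\pi-\int_\Sigma\frac{R^\Sigma}{2}\,da=4\pi-2\pi\chi(\Sigma)$, and higher genus only helps; what is needed is that regular level sets are connected. This requires no gluing or global Green's function:
\begin{enumerate}
\item Two components of $\{u_k=1/t\}$ in the annulus would bound regions in $\R^3$.
\item Neither region can lie inside the annulus, since by the maximum principle $u_k$ would be constant there.
\item So both regions contain the inner ball and are nested.
\item Then $u_k$ would be harmonic with equal constant boundary values on the region between them, a contradiction.
\end{enumerate}
Second, the flux of $u_k$ is some $\Phi_k\to 4\pi$ rather than exactly $4\pi$. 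Rescaling $u_k$ by $4\pi/\Phi_k$ fixes the normalization without affecting $D_k(r)\to D(r)$.
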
 

This has the following immediate corollary. 

\begin{corollary}
\label{corollary-well-defined}
The limit 
\[
m(g,u,\psi) := \lim_{r\to \infty} \left[3\pi \int_0^1 \frac{\psi(s)}{(1+s)^2}\right]^{-1} r D(r)
\]
exists and belongs to $(-\infty,\infty]$. 
\end{corollary}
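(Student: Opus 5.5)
This is immediate from Theorem \ref{theorem-monotonicity}, which we take as given. The plan is to observe that a non-decreasing real-valued function on $(0,\infty)$ has a limit at $+\infty$ in $(-\infty,\infty]$. Multiplying by a positive constant preserves this.

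First check that $D(r)$ is a finite real number for each $r>0$. Since $\psi$ is supported in $(0,1)$, $\theta(r,u)$ vanishes outside the region $\{1/(4r)<u<1/r\}$. This region lies in a compact annulus around $x_0$, because of the two-sided bounds $C_1|x-x_0|^{-1}\le u\le C_2|x-x_0|^{-1}$. On that annulus $\theta(r,u)$ is bounded, since $t^{-3}$ is bounded for $t$ in $(1/(4r),1/r)$ and $\psi$ is bounded. Also $|\grad^g u|^3\in L^1_{\text{loc}}(\R^3-\{x_0\})$ because $u\in W^{1,p}_{\text{loc}}(\R^3-\{x_0\})$ for all $p<\infty$. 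Hence $D(r)\in\R$, and $f(r):=rD(r)$ is a real-valued function on $(0,\infty)$.

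By Theorem \ref{theorem-monotonicity}, $f$ is non-decreasing. So $\lim_{r\to\infty}f(r)=\sup_{r>0}f(r)$ exists in $(-\infty,\infty]$. The limit is not $-\infty$, since it is bounded below by $f(1)\in\R$.

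Next, the normalizing constant $3\pi\int_0^1\psi(s)(1+s)^{-2}\,ds$ is strictly positive and finite. Indeed $\psi\ge 0$ is continuous, not identically zero, and compactly supported in $(0,1)$, while $(1+s)^{-2}\in[1/4,1]$ there. Its inverse is therefore a positive real number. Multiplying the limit by this number gives a limit in $(-\infty,\infty]$, with the convention $c\cdot\infty=\infty$ for $c>0$. There is no real obstacle here; all the content lies in Theorem \ref{theorem-monotonicity}.
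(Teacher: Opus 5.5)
Your proposal is correct and matches the paper, which states this corollary as an immediate consequence of the monotonicity of $r\mapsto rD(r)$ in Theorem~\ref{theorem-monotonicity} without giving a separate argument. Your extra checks, that $D(r)$ is finite (via the support of $\theta(r,u)$ in a compact annulus) and that the normalizing constant is positive, are exactly the routine details the paper leaves implicit.
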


We now turn our attention to proving Theorem \ref{theorem-monotonicity}. Fix a number $a > 0$.  We can find an open Euclidean annulus $\an := B(x_0,r_2)- \overline B(x_0,r_1)$ so that $\{1/(8a) < u < 1/a\}$ is compactly contained in $\an$.  Since $g$ has non-negative scalar curvature in the sense of approximations, we can find smooth metrics $g_k$ with scalar curvature at least $-\eps_k$ converging uniformly to $g$ on $\an$.  Define $u_k$ to be the $g_k$-harmonic function on $\an$ which agrees with $u$ on the boundary of $\an$. 

\begin{prop}
\label{W1pConvergence}
The functions $u_k$ converge to $u$ in $W^{1,p}_{\operatorname{loc}}(\an)$ for every $1 < p < \infty$. 
\end{prop}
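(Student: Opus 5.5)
We write the operators in divergence form,
\[
\lap_{g} v = \frac{1}{\sqrt{\det g}}\,\bd_i\bigl(\sqrt{\det g}\, g^{ij}\bd_j v\bigr),
\]
and set $a^{ij} = \sqrt{\det g}\, g^{ij}$ and $a_k^{ij} = \sqrt{\det g_k}\, g_k^{ij}$. Since $g_k\to g$ uniformly on $\overline{\an}$ and $g$ is continuous and positive definite, the coefficient matrices $a_k$ are uniformly elliptic and uniformly continuous, with $a_k\to a$ uniformly. The function $u$ is $g$-harmonic and smooth enough on a neighborhood of $\overline{\an}$, since $x_0\notin\overline{\an}$ and $u\in W^{1,p}_{\mathrm{loc}}(\R^3-\{x_0\})$ for all $p<\infty$. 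We compare $u_k$ with $u$ through the difference $v_k = u_k - u$. This function vanishes on $\bd\an$ and satisfies
\[
\bd_i\bigl(a_k^{ij}\bd_j v_k\bigr) = -\bd_i\bigl((a_k^{ij}-a^{ij})\bd_j u\bigr) = \bd_i F_k^i,
\]
where $F_k = -(a_k-a)\grad u$. The hope is that $\|F_k\|_{L^p}\to 0$, with control up to the boundary, so that the right-hand side is small.

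The key tool is the $W^{1,p}$ theory for divergence-form equations with continuous coefficients. For uniformly elliptic coefficients with a uniform modulus of continuity there is an interior estimate
\[
\|\grad w\|_{L^p(\an')} \le C\bigl(\|F\|_{L^p(\an)} + \|w\|_{L^p(\an)}\bigr)
\]
for $\an'\Subset\an$ and $\bd_i(a_k^{ij}\bd_j w) = \bd_i F^i$, where $C$ is independent of $k$. This holds because the constants depend only on $p$, the ellipticity and the modulus of continuity, as in the Calder\'on--Zygmund perturbation argument or the results of Byun--Wang and Auscher--Qafsaoui. The remaining task is to control $\|v_k\|_{L^p(\an)}$. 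In the global $W^{1,2}_0$ energy estimate, testing with $v_k$ gives
\[
\int |\grad v_k|^2 \le C\int |F_k|^2 \le C\|a_k-a\|_{\infty}^2\int_{\an}|\grad u|^2 \to 0,
\]
provided $\grad u\in L^2(\an)$. The Poincar\'e inequality on $\an$ then gives $v_k\to 0$ in $W^{1,2}_0(\an)$, and hence in $L^p$ for $p\le 6$.

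To get interior $W^{1,p}$ convergence we combine these two estimates. Fix $\an'\Subset\an''\Subset\an$, using that $u\in W^{1,p}(\an'')$ for every $p$. Then $\|F_k\|_{L^p(\an'')}\le \|a_k-a\|_\infty\|\grad u\|_{L^p(\an'')}\to 0$. Applying the interior estimate once gives $\grad v_k\to0$ in $L^{\min(p,6)}$ on a slightly smaller set. By Sobolev embedding this upgrades to $v_k\to0$ in a higher $L^q$, and iterating on a finite chain of nested annuli gives convergence in $L^p$ for any $p<\infty$. For $1<p\le 2$ the statement follows from the $p=2$ case by H\"older's inequality.

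A technical point is that $u$ is only in $W^{1,p}_{\mathrm{loc}}$ near the set, so the boundary data must be interpreted correctly. We take $u_k$ to be the solution with $u_k-u\in W^{1,2}_0(\an)$, which is legitimate since $u\in W^{1,2}$ near $\overline{\an}$ away from the pole. The main obstacle is to make sure the interior $W^{1,p}$ constant is genuinely uniform in $k$. We justify this by noting that the equicontinuity of the $a_k$, which follows from uniform convergence to the continuous limit $a$, is exactly the quantity the constant depends on.
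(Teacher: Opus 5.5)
Your argument is correct. The first step, testing the equation for $v_k=u_k-u$ against $v_k$, is the paper's energy estimate in a slightly cleaner form. You put the fixed $\nabla u$ rather than $\nabla u_k$ in the error term, so you do not need the uniform Dirichlet bound on $u_k$ that the paper invokes.

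The two routes diverge in the upgrade to $W^{1,p}_{\mathrm{loc}}$. The paper applies Meyers' estimate to the \emph{homogeneous} equation for $u_k$, freezing $g(x)=I$ and using that $A_k$ is close to $I$ on small balls, followed by a covering. This gives a uniform bound on $\nabla u_k$ in $L^q_{\mathrm{loc}}$ for some $q>p$, which the paper interpolates against the $L^2$ convergence, so all the smallness comes from the $W^{1,2}$ step. You instead apply the \emph{inhomogeneous} interior estimate directly to $v_k$, with forcing $F_k=-(a_k-a)\nabla u$, and use $u\in W^{1,p}_{\mathrm{loc}}$ to get $\|F_k\|_{L^p}\to 0$. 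This yields smallness in $W^{1,p}$ directly, with a rate linear in $\|a_k-a\|_\infty$ rather than the fractional power that interpolation produces.

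The price is your Sobolev bootstrap to control the lower-order term $\|v_k\|_{L^p}$, and it is needed only because of the form in which you quoted the estimate. With $\|w\|_{L^2}$ on the right-hand side, the form the paper itself uses in Section~\ref{section-pos}, one application after the energy step suffices. Your justification of $k$-independent constants via equicontinuity of the $a_k$ (from uniform convergence to the continuous limit $a$) is the same point the paper makes by freezing coefficients and covering.
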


\begin{proof}
First we show that $u_k \to u$ in $W^{1,2}(\an)$.  Define $A_k = (g_k)^{ij} \sqrt{\det g_k}$ and $A = g^{ij}\sqrt{\det g}$. Then 
\[
\div(A_k(x) \grad u_k) = 0 \quad \text{and}\quad \div(A(x) \grad u) = 0
\]
in the weak sense. We test these equations against $u_k - u$ and use the fact that $u_k-u$ vanishes on the boundary of $\an$ to get 
\begin{gather*}
\int_{\an} \la A_k(x)\grad u_k, \grad u_k - \grad u\ra \, dx = 0,\\
\int_{\an} \la A(x) \grad u, \grad u_k - \grad u\ra\, dx = 0. 
\end{gather*}
This implies that 
\[
0 = \int_{\an} \la (A_k(x) - A(x))\grad u_k, \grad u_k-\grad u\ra\, dx + \int_{\an} \la A(x)(\grad u_k - \grad u), \grad u_k-\grad u\ra\, dx. 
\]
By the uniform ellipticity of $A$, we deduce that 
\[
\int_{\an} \vert \grad u_k - \grad u\vert^2\, dx \le C \|A_k - A\|_{L^\infty} \int_{\an} \vert \grad u_k\vert \vert \grad u_k - \grad u\vert\, dx. 
\]
By the Cauchy inequality with $\eps$, this implies that 
\[
\int_{\an} \vert \grad u_k-\grad u\vert^2\, dx \le C \|A_k - A\|^2_{L^\infty} \int_{\an} \vert \grad u_k\vert^2\, dx. 
\]
Finally, since $u_k$ minimizes the $g_k$-Dirichlet energy for the given boundary values and $g_k \to g$ uniformly, we have 
\[
\int_{\an} \vert \grad u_k\vert^2\, dx \le C. 
\]
It follows that 
\[
\int_{\an} \vert \grad u_k -\grad u\vert^2\, dx \to 0 
\]
as $k\to \infty$.  Since $u_k = u$ on the boundary of $\an$, it follows from the Poincaré inequality that $u_k \to u$ in $W^{1,2}(\an)$. 

Next we upgrade this to convergence in $W^{1,p}_{\text{loc}}(\an)$. Fix some $3 < p < q < \infty$.  Fix a point $x\in \an$. By a linear change of coordinates, we can suppose that $g(x) = I$. Then $A_k$ will be close to $I$ in a small ball $2B$ centered at $x$ for large $k$. This allows one to apply Meyer's $L^q$ gradient estimate \cite[Theorem 2]{meyers1963p} to deduce that 
\[
\left[\int_B \vert \grad u_k\vert^q\, dx\right]^{1/q} \le C \left[\int_{2B} \vert u_k\vert^2\, dx\right]^{1/2} \le C. 
\]
Changing back to the original coordinates and using a covering argument, this implies that 
\[
\int_K \vert \grad u_k\vert^q\, dx \le C(K)
\]
for every compact subset $K$ of $\an$. Then since $\grad u_k \to \grad u$ in $L^2$ and $\grad u_k$ and $\grad u$ are uniformly bounded in $L^q$, it follows from interpolation that $\grad u_k \to \grad u$ in $L^p$ since $p < q$. Finally, again by the Poincaré inequality it follows that $u_k \to u$ in $W^{1,p}(K)$ for every compact set $K\subset \an$. 
\end{proof}

Let $D_k(r)$ be the $D$ quantity associated to $u_k$ and $\psi$. More precisely, define 
\[
D_k(r) = c_\psi + \int \theta(r,u_k) \vert \grad^{g_k} u_k\vert^3 \, dv_{g_k}
\]
where $\theta$ is defined by \eqref{eqn-theta}.
Since $\psi$ is supported in $(0,1)$ and $u_k \to u$ in $C^0$, this is well-defined for all $r\in [a,2a]$ once $k$ is large enough. Since the metric $g_k$ is smooth, we can consider the $F_k$ function associated to $u_k$: 
\[
F_k(t) = 4\pi t - t^2 \int_{\{u_k=\frac 1 t\}} H_k \vert \grad^{g_k} u_k\vert\, da_k + t^3 \int_{\{u_k=\frac 1 t\}} \vert \grad^{g_k} u_k\vert^2\, da_k.
\]
See Appendix \ref{Appendix-F} for the basic properties of the $F$ function. 
Again since $g_k$ is smooth, we can re-write $r D_k(r)$ in terms of the $F_k$ function. 

\begin{prop} \label{DvsF} We have 
\[
rD_k(r) = \int_0^1 \frac{\psi(v)}{(1+v)^2} \left[\int_1^2 \frac{F_k((1+v)r w)}{w^3}\, dw\right]\, dv. 
\]
\end{prop}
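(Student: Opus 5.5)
The identity is a bookkeeping computation: the coarea formula on the left, integration by parts in the level parameter on the right. Throughout, write $u=u_k$, $g=g_k$, and set
\[
G(t)=\int_{\{u=1/t\}} \vert\grad u\vert^2\,da, \qquad \Phi(t)=\int_{\{u=1/t\}} H\vert\grad u\vert\,da,
\]
so that $F_k(t)=4\pi t - t^2\Phi(t)+t^3G(t)$.

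\textbf{Step 1: rewrite the left-hand side.} Since $g_k$ is smooth, Sard's theorem makes almost every level of $u_k$ regular, and the coarea formula applies. It gives
\[
\int\theta(r,u)\vert\grad u\vert^3\,dv=\int_0^\infty \theta(r,s)\int_{\{u=s\}}\vert\grad u\vert^2\,da\,ds .
\]
Substituting $s=1/t$, so that $ds=dt/t^2$ and $\theta(r,1/t)=t^3[\frac12\psi(\frac{t}{2r}-1)-\psi(\frac tr-1)]$, this becomes
\[
\int_0^\infty t\Big[\tfrac12\psi\big(\tfrac{t}{2r}-1\big)-\psi\big(\tfrac tr-1\big)\Big]G(t)\,dt .
\]
So $rD_k(r)=rc_\psi$ plus $r$ times this integral.

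\textbf{Step 2: express $\Phi$ as a derivative of $G$.} Use the first variation of level-set integrals for harmonic $u$, as recalled in Appendix \ref{Appendix-F}. Apply the divergence theorem to $X=\vert\grad u\vert\grad u$ between two levels; harmonicity gives $\div X=\la\grad\vert\grad u\vert,\grad u\ra$. The result is $\frac{d}{ds}\int_{\{u=s\}}\vert\grad u\vert^2\,da=-\int_{\{u=s\}}H\vert\grad u\vert\,da$, with the paper's sign convention for $H$. In the variable $t$ this reads $t^2\Phi(t)=t^4G'(t)$ in the distributional sense. Hence
\[
F_k(t)=4\pi t+t^3G(t)-t^4G'(t).
\]

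\textbf{Step 3: rewrite the right-hand side.} In the inner integral substitute $s=(1+v)rw$. Then
\[
\frac{\psi(v)}{(1+v)^2}\int_1^2\frac{F_k((1+v)rw)}{w^3}\,dw=r^2\psi(v)\int_{(1+v)r}^{2(1+v)r}\frac{F_k(s)}{s^3}\,ds .
\]
Apply Fubini to write the right-hand side as $r^2\int_0^\infty K(s)F_k(s)s^{-3}\,ds$, where
\[
K(s)=\int_0^1\psi(v)\mathbf 1_{[(1+v)r,\,2(1+v)r]}(s)\,dv=\int_{s/(2r)-1}^{s/r-1}\psi ,
\]
with $\psi$ extended by zero outside $(0,1)$. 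Then
\[
K'(s)=\tfrac1r\psi\big(\tfrac sr-1\big)-\tfrac1{2r}\psi\big(\tfrac s{2r}-1\big),
\]
which is exactly $-\frac1r$ times the bracket appearing in Step 1.

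Now insert Step 2 and integrate the $G'$ term by parts; $K$ is compactly supported in $(r,4r)$, so there are no boundary terms. This moves the derivative onto $K$ and reproduces the term $r\int t[\dots]G\,dt$ from Step 1. The $4\pi$ term gives $4\pi r^2\int K(s)s^{-2}\,ds$, and a direct computation reduces this to $r\cdot 2\pi\int_0^1\psi(v)(1+v)^{-1}\,dv=rc_\psi$.

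\textbf{Main obstacle.} The delicate point is the exact accounting of the powers of $t$ in Step 2 and Step 3. The weight $s^{-3}$ against $t^3G-t^4G'$ must combine with the $K'$ contribution so that no stray multiple of $\int KG$ survives. If such a term seems to appear, I would first recheck the sign convention for $H$ and the normalization of $F$ in Appendix \ref{Appendix-F} (e.g.\ whether the middle term carries $t^2$ or a different power), and then recombine as $\frac{d}{ds}(s^{\alpha}G)$ with the right $\alpha$.

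A secondary issue is justifying the integration by parts through critical levels of $u_k$. I would handle this by noting that $G$ is absolutely continuous and that the identity $G'=t^{-2}\Phi$ holds in $L^1_{\rm loc}$, using the integrability of $H\vert\grad u\vert$ from \cite{agostiniani2024green}.
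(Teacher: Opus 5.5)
Your strategy is the paper's: the coarea formula, the level-set derivative of $\int\vert\grad u\vert^2$ from Proposition \ref{ac-gradient}, and a change of variables. Your kernel $K$ is just the paper's intermediate expression $\int_0^r r\psi(s/r)\int_{r+s}^{2r+2s}F_k(t)t^{-3}\,dt\,ds$ after Fubini. However, Step 2 has the wrong sign, and as written the computation does not close.

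With $H=-\div(\grad u/\vert\grad u\vert)$ and $\lap u=0$ one has $H=\la\grad\vert\grad u\vert,\grad u\ra/\vert\grad u\vert^2$. The divergence theorem applied to $\vert\grad u\vert\grad u$ then gives $\frac{d}{ds}\int_{\{u=s\}}\vert\grad u\vert^2\,da=+\int_{\{u=s\}}H\vert\grad u\vert\,da$. In the variable $t$ this reads $G'(t)=-t^{-2}\Phi(t)$, which is exactly Proposition \ref{ac-gradient}. Hence $F_k=4\pi t+t^3G+t^4G'$, not $4\pi t+t^3G-t^4G'$. As a sanity check, take $u=\vert x\vert^{-1}$ in flat space, so $G=4\pi t^{-2}$: the correct formula gives $F\equiv 0$, while yours gives $16\pi t$.

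With your sign, Step 3 yields $rc_\psi+2r^2\int KG\,ds-r\int s[\dots]G\,ds$. That is a stray $\int KG$ term plus the wrong sign on the main term. So your claim that integration by parts "reproduces the term from Step 1" does not follow from the formulas you wrote.

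With the sign corrected, the computation closes as you hoped. First, $r^2\int K(s)\,s\,G'(s)\,ds=-r^2\int(sK'+K)G\,ds$. The resulting $-r^2\int KG$ cancels the $r^2\int KG$ coming from the $t^3G$ term. The remaining piece is
$-r^2\int sK'G\,ds=r\int s\big[\tfrac12\psi(\tfrac{s}{2r}-1)-\psi(\tfrac sr-1)\big]G\,ds$, which is precisely $r$ times your Step 1 integral. Your evaluation of the $4\pi$ term as $rc_\psi$ is correct.

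So the "main obstacle" you anticipated is real, but it comes from the sign of $H$ in Step 2, not from the powers of $t$ in $F$. Your treatment of critical levels via the absolute continuity in Proposition \ref{ac-gradient} is fine.
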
 

\begin{proof}
Using Proposition \ref{ac-gradient} and the co-area formula, one computes that 
\[
\int_0^r r \psi(s/r)\left[ \int_{r+s}^{2r+2s} \frac{F_k(t)}{t^3}\,dt\right]\, ds = r D_k(r). 
\]
The formula now follows by changing variables in the integrals.
\end{proof}

Crucially, the functions $D_k(r)
$ satisfy the following almost-monotonicity property. 

\begin{prop}
For any $a \le r_1 < r_2 \le 2a$, we have 
\[
r_2 D_k(r_2) - r_1 D_k(r_1) \ge - C \eps_k,
\]
where $C$ is a constant that does not depend on $k$. 
\end{prop}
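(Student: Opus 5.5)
The plan is to use the representation of $rD_k(r)$ from Proposition \ref{DvsF} together with the almost-monotonicity of the $F$ function from Appendix \ref{Appendix-F}. Since $\psi\ge 0$, Proposition \ref{DvsF} writes $rD_k(r)$ as a non-negative weighted average of the values $F_k((1+v)rw)$ with $(v,w)\in[0,1]\times[1,2]$. The weights are $\psi(v)(1+v)^{-2}w^{-3}$, and they do not depend on $r$. Hence
\[
r_2D_k(r_2)-r_1D_k(r_1)=\int_0^1\frac{\psi(v)}{(1+v)^2}\int_1^2\frac{F_k((1+v)r_2w)-F_k((1+v)r_1w)}{w^3}\,dw\,dv,
\]
and it suffices to bound $F_k(s_2)-F_k(s_1)$ from below by $-C\eps_k$ uniformly. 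Here $s_i=(1+v)r_iw$, and the range of $t$-values is the compact interval $t\in[a,8a]$. The relevant level sets $\{u_k=1/t\}$ lie in a fixed compact subset of $\an$ once $k$ is large, because $u_k\to u$ uniformly there (by Proposition \ref{W1pConvergence} with $p>3$) and $\{1/(8a)<u<1/a\}\Subset\an$.

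For the $F$ monotonicity, I would invoke the formula of Agostiniani--Mazzieri--Oronzio as recalled in Appendix \ref{Appendix-F}. For regular values,
\[
F_k'(t)=t^2\int_{\{u_k=1/t\}}\Big(\frac{|\grad^{T}|\grad u_k||^2}{|\grad u_k|^2}+\tfrac12\big(H_k-\tfrac{2|\grad u_k|}{u_k}\big)^2+\tfrac12(R_k-R_{\Sigma})\Big)\frac{|\grad u_k|}{?}\,da_k
\]
up to the exact weights. The Gauss--Bonnet term $\int_\Sigma R_\Sigma$ is controlled by $8\pi$ times the number of sphere components. All terms except the scalar curvature term are then non-negative. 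For $F_k$ one needs that the level sets are connected (or at least that each component has $\chi\le 2$ and that the $4\pi t$ term compensates). Since $H_2(\an)$ is nontrivial, I would use the standard argument: regular level sets of $u_k$ in the region have no closed components bounding regions where $u_k$ has no critical behaviour, by the maximum principle. This gives $\int_\Sigma R_\Sigma\le 8\pi$ on a.e. level set, in the form needed in the Appendix. The remaining term is $-\tfrac12 t^2\int_{\{u_k=1/t\}}(R_k)_-|\grad u_k|\,da_k\ge -\tfrac12\eps_k t^2\int|\grad u_k|\,da_k$. By the flux normalization, the last integral equals the flux, which is $4\pi$ up to a factor tending to $1$, since it is independent of $t$ for $u_k$ harmonic and converges to the flux of $u$. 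Thus $F_k'(t)\ge -C\eps_k a^2$ for a.e. $t\in[a,8a]$.

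Integrating gives $F_k(s_2)-F_k(s_1)\ge -C\eps_k(s_2-s_1)\ge -C'\eps_k$. One must justify this across critical values of $u_k$, where $F_k$ is only shown to be absolutely continuous or to have non-negative jumps. I would rely on the Appendix result that $F_k$ is locally absolutely continuous with the above derivative bound, using Sard's theorem and smoothness of $g_k$. Plugging this into the weighted integral and using $\int_0^1\psi\,dv<\infty$ yields the claim with $C$ independent of $k$. The main obstacle is the topological and critical-point issue: justifying the derivative formula and Gauss--Bonnet bound for $F_k$ on an annulus rather than on all of $\R^3$, including through critical levels. I expect this is exactly what the Appendix provides, with $C$ depending only on $a$, $\psi$ and the uniform flux bound.
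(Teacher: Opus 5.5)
Your overall route is the paper's: reduce via Proposition \ref{DvsF} to a lower bound for $F_k(s_2)-F_k(s_1)$ with $a\le s_1<s_2\le 8a$, then use Proposition \ref{F-derivative}, Gauss--Bonnet and $R(g_k)\ge-\eps_k$. The step where you claim $F_k'(t)\ge -C\eps_k a^2$ pointwise rests on a misremembered weight, however. In Proposition \ref{F-derivative} the scalar curvature enters as $\frac12\int_{\{u_k=1/t\}}R_{g_k}\,da_k$, with no factor $t^2\vert\grad^{g_k} u_k\vert$, so the flux normalization does not help. You only get $F_k'(t)\ge -\frac{\eps_k}{2}\mathcal H^2_{g_k}(\{u_k=1/t\})$, and the areas of individual level sets are not bounded uniformly in $k$ and $t$, since $\vert\grad u_k\vert$ can be small near critical points. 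The fix, as in the paper, is to integrate first and then apply the co-area formula:
\[
F_k(t_2)-F_k(t_1)\ge \frac12\int_{\{1/t_2<u_k<1/t_1\}} R_{g_k}\,u_k^{-2}\,\vert\grad^{g_k}u_k\vert\,dv_k\ge -C\eps_k .
\]
Here $C$ is independent of $k$ because $u_k^{-2}\le 64a^2$ on that set. Also, $\int_{\an}\vert\grad u_k\vert^2$ is uniformly bounded, since $u_k$ minimizes the $g_k$-energy with boundary data $u$ and $g_k\to g$ uniformly.

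Second, connectedness of the regular level sets is not provided by Appendix \ref{Appendix-F}. It is a standing hypothesis there, so it must be proved inside this proposition. Your sentence about components ``bounding regions where $u_k$ has no critical behaviour'' is not an argument. Connectedness is also genuinely necessary. If $\{u_k=1/t\}$ had $N$ sphere components, Gauss--Bonnet would give $-\frac12\int R^{\Sigma}=-4\pi N$, and the $+4\pi$ from $\frac{d}{dt}(4\pi t)$ cancels only one of these. So your fallback that ``the $4\pi t$ term compensates'' is false.

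The paper's argument runs as follows. For $t\in[a,8a]$ with $1/t$ a regular value, the level set is a smooth closed surface in the interior of $\an$ once $k$ is large. Suppose it had two components $\Sigma_1,\Sigma_2$, bounding regions $\Omega_1,\Omega_2\subset\R^3$. If some $\Omega_i\subset\an$, the maximum principle forces $u_k\equiv 1/t$ on $\Omega_i$, which contradicts $\grad u_k\neq 0$ on $\Sigma_i$. Hence both regions contain the inner hole, so they are nested, say $\Omega_1\subset\Omega_2$. Then $\Omega_2\setminus\overline{\Omega_1}\subset\an$ has $u_k=1/t$ on its entire boundary, the same contradiction. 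With connectedness you get $\int_\Sigma R^\Sigma=4\pi\chi(\Sigma)\le 8\pi$, and the rest of your argument goes through.
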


\begin{proof}
We continue to work with $\an$ defined as below Corollary \ref{corollary-well-defined}. First, we claim that the level set $\{u_k = 1/t\}$ is connected for every $t\in [a,8a]$ such that $1/t$ is a regular value of $u_k$.  Note that the $C^0$ convergence $u_k\to u$ implies that such a level set is compactly contained in the interior of $\an$ for large $k$. Suppose for contradiction that $\{u_k = 1/t\}$ is not connected.  Then it contains two distinct components $\Sigma_1$ and $\Sigma_2$, each of which is a smooth, closed surface contained in the interior of $\an$. Since we are in $\R^3$, these enclose unique bounded open regions $\Omega_1$ and $\Omega_2$. Neither of these can be contained in $\an$ by the maximum principle. The only other possibility is that $\Omega_1$ and $\Omega_2$ are nested, but this also violates the maximum principle. 

The formula for $F_k'(t)$ in Proposition \ref{F-derivative} implies that 
\[
F_k'(t) \ge \int_{\{u_k=1/t\}} \frac{R_{g_k}}{2}\, da_k.
\]
Since the level sets of $u_k$  are connected, one can integrate $F_k'$ and apply the co-area formula to deduce the following almost-monotonicity formula for $F_k$: 
\[
F_k(t_2) - F_k(t_1) \ge \int_{\{1/t_2 < u_k < 1/t_1\}} R_{g_k} \vert \grad^{g_k} u_k\vert\, dv_k \ge -C \eps_k, \quad a \le t_1 < t_2 \le 8a.
\]
Here $C$ is a constant that does not depend on $k$ since $\an$ is fixed and the $g_k$-Dirichlet energy of $u_k$ is uniformly controlled. By Proposition \ref{DvsF}, this implies 
\[
r_2 D_k(r_2) - r_1D_k(r_1) \ge - C\eps_k 
\]
for all $a\le r_1 < r_2 \le 2a$, as needed. 
\end{proof} 

Finally, the $W^{1,p}$ convergence of $u_k$ to $u$ implies that $D_k(r) \to D(r)$ as $k\to \infty$ for every $r\in [a,2a]$. Passing to the limit in the almost-monotonicity formula, it follows that $r \mapsto r D(r)$ is non-decreasing on the interval $[a,2a]$.  Theorem \ref{theorem-monotonicity} now follows since $a$ was arbitrary. 

\section{Positivity of the Mass} 
\label{section-pos} 

Let $g$ be a continuous asymptotically flat metric on $\R^3$ with non-negative scalar curvature in the sense of approximations. Let $u$ be a Green's function for $\lap_g$ with pole at $x_0$, normalized so that the flux of $\grad u$ over any level set is $4\pi$.  The goal of this section is to establish the following theorem. 

\begin{theorem}
The harmonic mass $m(g,u,\psi)$ is non-negative. 
\end{theorem}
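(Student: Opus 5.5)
By Theorem \ref{theorem-monotonicity}, $r\mapsto rD(r)$ is non-decreasing. It therefore suffices to show that $\liminf_{r\to 0} rD(r) \ge 0$. In fact I will aim to show $D(r)\to 0$ as $r\to 0$, so that $rD(r)\to 0$. Monotonicity then gives $rD(r)\ge 0$ for every $r>0$. Since the normalizing constant $3\pi\int_0^1 \psi(s)(1+s)^{-2}\,ds$ is positive, this yields $m(g,u,\psi)\ge 0$.

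Note that $r\to 0$ corresponds to the region $\{1/(4r)<u<1/r\}$, where $u$ is large, i.e.\ a small neighborhood of the pole $x_0$.

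\textbf{Step 1: blow-up of $u$ at the pole.} After a linear change of coordinates I assume $g(x_0)=\delta$. Set $u_r(x) = r\,u(x_0+rx)$ and $g_r(x) = g(x_0+rx)$. Then $u_r$ is $g_r$-harmonic away from $0$, and $g_r\to\delta$ locally uniformly by continuity of $g$. The Grüter--Widman bounds give $C_1/|x|\le u_r\le C_2/|x|$ uniformly in $r$.

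I would then argue as in Proposition \ref{W1pConvergence}. Interior Hölder estimates (De Giorgi) and Meyers' $L^q$ gradient estimates on compact subsets of $\R^3-\{0\}$ give compactness. Any subsequential limit $u_0$ is a positive harmonic function on $\R^3-\{0\}$ with $u_0\asymp |x|^{-1}$ and flux $4\pi$. The flux normalization passes to the limit by testing against cutoffs, using the weak formulation of $\lap_g u=-4\pi\delta_{x_0}$.

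By Bôcher's theorem, $u_0 = |x|^{-1}+c$ for some constant $c$. The bound $u_0\le C_2/|x|$ at infinity forces $c=0$. Hence $u_r\to |x|^{-1}$ in $W^{1,p}_{\mathrm{loc}}(\R^3-\{0\})$ for every $p<\infty$, along the full family. The upgrade from weak to strong gradient convergence uses the energy comparison argument of Proposition \ref{W1pConvergence}, applied with the Euclidean harmonic replacement.

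\textbf{Step 2: scaling and continuity of $D$.} The change of variables $y=x_0+rx$ gives $dv_g=r^3\,dv_{g_r}$ and $|\grad^g u|^3 = r^{-6}|\grad^{g_r}u_r|^3$. Since $u = u_r/r$, the definition of $\theta$ yields $\theta(r,u)=r^3\theta(1,u_r)$. Therefore
\[
D(r) = c_\psi + \int \theta(1,u_r)\,|\grad^{g_r}u_r|^3\,dv_{g_r}.
\]
The integrand is supported in $\{1/4<u_r<1\}$, which lies in a fixed compact annulus by the two-sided bounds. On that annulus, $W^{1,p}$ convergence with $p>3$, uniform convergence of $u_r$, and uniform convergence of $g_r$ give $D(r)\to D_{\mathrm{euc}}$. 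Here $D_{\mathrm{euc}}$ is the $D$ quantity of $|x|^{-1}$ for the flat metric.

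It remains to verify $D_{\mathrm{euc}}=0$. This is the calibration of $c_\psi$. For the flat Green's function, $F(t)\equiv 0$, since $H=2|\grad u|/u$ and $|\grad u|=u^2$ on spheres. Proposition \ref{DvsF}, applied to the smooth flat metric, then gives $rD_{\mathrm{euc}}(r)=0$. Alternatively, one can compute directly with $|\grad u|^3=u^6$ in polar coordinates. Both computations should recover exactly $-c_\psi$ for the integral.

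\textbf{Main obstacle.} I expect Step 1 to be the hardest part, for two reasons. First, one must identify the limit with the correct flux normalization. Second, one must obtain \emph{strong} $W^{1,p}$ convergence with $p>3$, because $D$ involves $|\grad u|^3$. The ingredients needed are:
\begin{itemize}
\item uniform Meyers estimates on a fixed annulus, which hold because $g_r$ is uniformly close to $\delta$;
\item strong $L^2$ gradient convergence, obtained by comparing $u_r$ with the $\delta$-harmonic function having the same boundary data, as in Proposition \ref{W1pConvergence};
\item interpolation between the $L^2$ convergence and the uniform $L^q$ bound.
\end{itemize}
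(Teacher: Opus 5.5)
Your proposal is correct and follows essentially the same route as the paper. Both arguments reduce via monotonicity to showing $D(r)\to 0$ as $r\to 0$, blow up at the pole to get $u_r\to |x|^{-1}$ strongly in $W^{1,p}_{\mathrm{loc}}(\R^3-\{0\})$ using Meyers estimates, B\^ocher/Liouville and the flux normalization, and then conclude from the scaling identity $D(r)=\mathcal D(g_r,u_r)$ and continuity of $\mathcal D$ in $C^0\times W^{1,p}$ for $p>3$. The differences are minor: you get strong convergence from harmonic replacement plus interpolation rather than applying Meyers directly to the difference equation, and you explicitly check $\mathcal D(g_{\mathrm{euc}},|x|^{-1})=0$ (via $F\equiv 0$), which the paper only asserts.
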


In light of the monotonicity formula in Theorem \ref{theorem-monotonicity} and the definition of $m(g,u,\psi)$, this will follow immediately from the following proposition. 

\begin{prop}
We have $\lim_{r\to 0} r D(r) = 0$. 
\end{prop}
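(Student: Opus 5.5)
Since $rD(r)$ is non-decreasing by Theorem \ref{theorem-monotonicity}, it suffices to show $D(r)\to 0$ as $r\to 0$; then $rD(r)\to 0$ as well, because $D(r)$ stays bounded. The proof has four steps: rescale around the pole, prove a blow-up limit for the Green's function, use continuity of $D$ under $W^{1,p}$ convergence, and compute $D$ exactly for the Euclidean Green's function.

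\textbf{Scaling.} The relevant region for $D(r)$ is $\{1/(4r)<u<1/r\}$. By the two-sided bounds $C_1/|x-x_0|\le u\le C_2/|x-x_0|$, as $r\to 0$ this region lies in a Euclidean annulus $\{c_1 r\le |x-x_0|\le c_2 r\}$, which shrinks to the pole.

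After a linear change of coordinates we take $g(x_0)=\delta$. Set $g_r(y)=g(x_0+ry)$ and $u_r(y)=r\,u(x_0+ry)$. The integrand $\theta(r,u)|\nabla u|^3\,dv$ is invariant under this rescaling. Indeed, $\theta(r,t)=t^{-3}\Phi(rt)$ with $\Phi(s)=\tfrac12\psi(\tfrac1{2s}-1)-\psi(\tfrac1s-1)$, and $|\nabla u|^3\,dv$ scales like $r^{-3}\cdot r^{-3}\cdot r^{3}$. Hence $D(r)$ for $(g,u)$ equals $D(1)$ for $(g_r,u_r)$. Each $u_r$ is a $g_r$-Green's function with pole $0$ and flux $4\pi$, and $g_r\to\delta$ uniformly on compact sets by continuity of $g$.

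\textbf{Blow-up limit.} The main step is to show $u_r\to |y|^{-1}$ in $W^{1,p}$ on a fixed annulus $\{c_1\le|y|\le c_2\}$ for some $p>3$. The two-sided bounds hold for $u_r$ with constants independent of $r$, since Grüter--Widman constants depend only on ellipticity. Combined with De Giorgi--Nash--Moser, this makes $u_r$ locally uniformly Hölder on $\R^3-\{0\}$. By Meyers' estimate, as in Proposition \ref{W1pConvergence}, $u_r$ is locally bounded in $W^{1,q}$ for some $q>p$.

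Along a subsequence $u_r\to v$ locally uniformly and in $W^{1,p}_{\rm loc}(\R^3-\{0\})$. This is by the same argument as Proposition \ref{W1pConvergence}: compare $u_r$ on an annulus with the Euclidean-harmonic function having the same boundary values, then use the $L^2$ gradient estimate plus interpolation. The limit $v$ is Euclidean harmonic on $\R^3-\{0\}$ and satisfies $C_1/|y|\le v\le C_2/|y|$.

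Flux passes to the limit, because the flux can be written weakly as $-\int A\nabla u\cdot\nabla\chi$ for a cutoff $\chi$ equal to $1$ near $0$. So $v$ has flux $4\pi$. By Bôcher's theorem, $v=a/|y|+h$ with $h$ harmonic on $\R^3$; the bounds force $h$ bounded near infinity and tending to $0$, so $h=0$. The flux then gives $a=1$. Since the limit is unique, the whole family converges. I expect this step to be the main obstacle, specifically the uniformity of the gradient estimates and justifying the flux in the limit.

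\textbf{Continuity of $D$.} $D(1)$ depends continuously on $(g,u)$ under uniform convergence of $g$ and $W^{1,p}$, $p\ge3$, convergence of $u$ on a compact annulus. Uniform convergence makes the level-set region eventually lie in a fixed annulus, $\theta$ is smooth there, and $|\nabla u|^3$ converges in $L^1$. Therefore $\lim_{r\to0}D(r)=D_{\rm euc}(1)$, computed with $u=1/|y|$.

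\textbf{Euclidean computation.} By Proposition \ref{DvsF}, applied to the smooth flat metric, $D_{\rm euc}$ is an integral of the $F$ function of $1/|y|$. Direct computation gives, on $\{|y|=1/t\}$: $|\nabla u|=t^2$, $H=2t$, and area $4\pi/t^2$. Then
\[
F(t)=4\pi t-t^2\cdot 2t\cdot t^2\cdot 4\pi t^{-2}+t^3\cdot t^4\cdot 4\pi t^{-2},
\]
which must vanish identically; I will double-check the powers, since $F\equiv0$ is the expected normalization. It follows that $D_{\rm euc}\equiv0$. Equivalently, $c_\psi$ is designed exactly to cancel the integral term in Definition \ref{defn-D} for $u=1/|y|$. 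Hence $D(r)\to0$, and so $rD(r)\to 0$.
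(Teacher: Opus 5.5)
Your proposal is correct and follows essentially the paper's proof. You rescale at the pole with $g(x_0)=\delta$, get $u_r\to |y|^{-1}$ in $W^{1,p}_{\mathrm{loc}}(\R^3-\{0\})$ from Meyers' estimate, B\^ocher/Liouville and flux convergence, then conclude by continuity of $\mathcal D$ and $\mathcal D(g_{\mathrm{euc}},|y|^{-1})=0$. The paper gets strong convergence by applying Meyers directly to $u_{r_k}-w$ rather than by harmonic replacement plus interpolation, which is an inessential difference, and the monotonicity you invoke at the start is not needed.

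One slip to fix: the level set $\{u=1/t\}$ is $\{|y|=t\}$, where $|\nabla u|=t^{-2}$, $H=2/t$ and the area is $4\pi t^2$. This gives $F(t)=4\pi t-8\pi t+4\pi t=0$; the values you wrote belong to $\{u=t\}$ and do not cancel.
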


\begin{proof}
The main step is to establish a suitable asymptotic expansion for $u$ near the pole. By an initial affine change of coordinates, we can assume without loss of generality that the pole is at the origin and $g(0) = I$.  Define $A(x) = g^{ij} \sqrt{\det g}$ so that 
\[
\div(A(x) \grad u) = 0
\]
away from the pole. 
Define rescaled metrics $g_r(x) = g(rx)$, and rescaled coefficients $A_r(x) = A(rx)$, and rescaled functions $u_r(x) = r u(rx)$. The rescaled functions $u_r$ solve 
\[
\div(A_r(x) \grad u_r) = 0
\]
in $\R^3 - \{0\}$. Also note that 
\[
\frac{C_1}{\vert x\vert} \le u_r(x) \le \frac{C_2}{\vert x\vert}
\]
for some constants $0 < C_1 < C_2$. 

Fix $a_1 < a_2 < b_2 < b_1$ and consider the annuli $\an_2 = B(0,b_2) - B(0,a_2) \subset \an_1 = B(0,b_1) - B(0,a_1)$.  Since $g$ is continuous and $g(0) = I$, we know that $A_r(x) \to I$ uniformly on $\an_1$ as $r\to 0$.  Fix any $p > 3$. By Meyer's $L^p$ gradient estimate \cite[Theorem 2]{meyers1963p}, we have 
\[
\|\grad u_r\|_{L^p(\an_2)} \le C \|u_r\|_{L^2(\an_1)}.
\]
The right hand side is uniformly bounded independent of $r$, and hence we see that $u_r$ is uniformly bounded in $W^{1,p}(\an_2)$.  Passing to a subsequence and using a diagonal argument, we can therefore suppose that $u_{r_k} \weak w$ weakly in $W^{1,p}_{\text{loc}}(\R^3 - \{0\})$ and $u_{r_k} \to w$ in $C^0_{\text{loc}}(\R^3 - \{0\})$. Since $A_r \to I$, the weak limit $w$ is a weak solution to $\div(\grad w) = 0$ in $\R^3 - \{0\}$ and therefore $w$ is actually a smooth harmonic function. Moreover, $w$ satisfies 
\[
\frac{C_1}{\vert x\vert} \le w(x) \le \frac{C_2}{\vert x\vert}
\]
for all $x\in \R^3 - \{0\}$. By B\^ocher's theorem, there is a constant $a > 0$ and a harmonic function $v(x)$ on $\R^3$ such that 
\[
w(x) = \frac{a}{\vert x\vert} + v(x). 
\]
Since $w(x) \to 0$ at infinity, the Liouville theorem implies that $v\equiv 0$ and therefore $w(x) = a \vert x\vert^{-1}$. 

Continuing to work with the same subsequence, we claim that $u_{r_k} \to w$ strongly in $W^{1,p}_{\text{loc}}(\R^3 - \{0\})$ for every $p > 3$.  Indeed, consider $\an_2 \subset \an_1$ as above. Write $A_r(x) = I - B_r(x)$ so that $B_r \to 0$ locally uniformly.  Let $w_{r_k} = u_{r_k} - w$.  Then we have 
\[
\div(A_{r_k}(x) \grad w_{r_k}) = \div(B_{r_k}(x)\grad w).
\]
Fix any $p > 3$. The coefficients $A_{r_k}(x)$ are converging to $I$, so Meyer's $L^p$ gradient estimate \cite[Theorem 2]{meyers1963p} implies that 
\[
\|\grad w_{r_k}\|_{L^p(\an_2)} \le C \|w_{r_k}\|_{L^2(\an_1)} + C \|B_{r_k}(x) \grad w\|_{L^p(\an_1)}. 
\]
The right hand side goes to 0 as $r_k\to 0$, and therefore we obtain $w_{r_k} \to 0$ in $W^{1,p}_{\text{loc}}(\R^3 - \{0\})$, as claimed.  We note that strong convergence in $W^{1,p}$ is enough to obtain the convergence of the fluxes and therefore $w = \vert x\vert^{-1}$. Since the limit is independent of the subsequence, we have $u_r \to \vert x\vert^{-1}$ in $W^{1,p}_{\text{loc}}(\R^3 - \{0\})$ as $r\to 0$.

Now observe that by a change of variables we have  $D(r) = \mathcal D(g_r,u_r)$ where 
\begin{align*}
\mathcal D(h,f) = c_\psi + \int_{\an} \varphi(f) \vert \grad^h f\vert^3\, dv_h,
\end{align*} 
and we have defined $\varphi(f) = \theta(1,f)$ for $\theta$ as in \eqref{eqn-theta} and $\an = B(0,4)-B(0,1)$.  The function $\mathcal D$ is a continuous function of the pair $(h,f)$ in $C^0 \times W^{1,p}$ for $p > 3$ in a neighborhood of $(g_{\text{euc}},\vert x\vert^{-1})$. Therefore we have $\mathcal D(g_r,u_r) \to \mathcal D(g_{\text{euc}}, \vert x\vert^{-1}) = 0$ as $r\to 0$.  Thus $D(r) \to 0$ as $r\to 0$ and so certainly $rD(r) \to 0$ as $r\to 0$ as well. 
\end{proof} 

\section{Independence of the Choices}

\label{section-choices}

In this section, we show that the harmonic mass $m(g,u,\psi)$ does not depend on the choice of the Green's function $u$ or the choice of the function $\psi$.   
First, we show that the harmonic mass does not depend on $\psi$ for fixed $u$. For convenience, define $Q_\psi(r) = rD_\psi(r)$.  

\begin{prop}
Fix smooth, non-negative bump functions $\psi_1$ and $\psi_2$ compactly supported in $(0,1)$ with 
\[
\int_0^1 \frac{\psi_1(s)}{(1+s)^2}\, ds = \int_0^1 \frac{\psi_2(s)}{(1+s)^2}\, ds = 1. 
\]
Then for any $r > 0$ we have $Q_{\psi_1}(r) \le Q_{\psi_2}(4r)$ and $Q_{\psi_2}(r) \le Q_{\psi_1}(4r)$. 
\end{prop}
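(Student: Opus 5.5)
The plan is to reduce the comparison to the almost-monotonicity of the $F$ function for smooth approximating metrics, using Proposition \ref{DvsF}. That proposition writes $Q_\psi(r)$ as a weighted average of $F$ over the interval $[r,4r]$. Then $Q_{\psi_1}(r)$ only sees values of $F$ on $[r,4r]$, while $Q_{\psi_2}(4r)$ only sees values on $[4r,16r]$. Monotonicity of $F$ should then give the inequality, provided the two weights have the same total mass, which is exactly what the normalization guarantees.

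\emph{Approximation.} Fix $r>0$. As in Section \ref{section-monotonicity}, choose a Euclidean annulus $\an$ around $x_0$ that compactly contains $\{1/(32r) < u < 2/r\}$. Take smooth metrics $g_k\to g$ uniformly on $\an$ with $R(g_k)\ge -\eps_k$, and let $u_k$ be the $g_k$-harmonic functions on $\an$ agreeing with $u$ on $\bd\an$.
\begin{itemize}
\item By Proposition \ref{W1pConvergence}, $u_k\to u$ in $W^{1,p}_{\text{loc}}(\an)$ for $p>3$, and hence $D_{k,\psi_i}(s)\to D_{\psi_i}(s)$ for $s=r$ and $s=4r$.
\item The connectedness argument for the level sets $\{u_k=1/t\}$, together with the formula for $F_k'$ from Appendix \ref{Appendix-F}, gives
\[
F_k(t_2)-F_k(t_1)\ge -C\eps_k \quad \text{for } r\le t_1<t_2\le 16r ,
\]
with $C$ independent of $k$. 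This is the same proof as before with $[a,8a]$ replaced by $[r,16r]$.
\end{itemize}

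\emph{Comparison.} By Proposition \ref{DvsF}, with the substitution $t=(1+v)sw$,
\[
Q_{k,\psi}(s)=\int_0^1 \frac{\psi(v)}{(1+v)^2}\int_1^2 \frac{F_k((1+v)sw)}{w^3}\,dw\,dv ,
\]
and the argument $t=(1+v)sw$ ranges over $[s,4s]$. Set $M_k=F_k(4r)$.
\begin{itemize}
\item For $\psi_1$ at scale $r$, every argument lies in $[r,4r]$, so $F_k(t)\le M_k+C\eps_k$. Hence
\[
Q_{k,\psi_1}(r)\le (M_k+C\eps_k)\int_0^1\frac{\psi_1(v)}{(1+v)^2}\,dv\int_1^2 w^{-3}\,dw=\tfrac38(M_k+C\eps_k).
\]
\item For $\psi_2$ at scale $4r$, every argument lies in $[4r,16r]$, so $F_k(t)\ge M_k-C\eps_k$. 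Hence $Q_{k,\psi_2}(4r)\ge \tfrac38(M_k-C\eps_k)$.
\end{itemize}
The nonnegativity of $\psi_i$ is used here, and the normalization $\int_0^1\psi_i(v)(1+v)^{-2}\,dv=1$ makes both prefactors equal to $\tfrac38$. Combining the two bounds,
\[
Q_{k,\psi_1}(r)\le Q_{k,\psi_2}(4r)+C\eps_k .
\]
Letting $k\to\infty$ gives $Q_{\psi_1}(r)\le Q_{\psi_2}(4r)$. The second inequality follows by swapping the roles of $\psi_1$ and $\psi_2$.

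\emph{Main obstacle.} The only real care needed is in the first step. The annulus $\an$ and the almost-monotonicity must cover the full range $t\in[r,16r]$, and each relevant level set must be compactly inside $\an$ for large $k$. This follows from the $C^0$ convergence $u_k\to u$, and the constant $C$ stays uniform because the Dirichlet energies of $u_k$ are uniformly bounded. The remaining work is bookkeeping with the change of variables in Proposition \ref{DvsF}.
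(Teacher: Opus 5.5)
Your proposal is correct and follows the paper's proof essentially step for step. You approximate on a fixed annulus, use the almost-monotonicity $F_k(t_2)-F_k(t_1)\ge -C\eps_k$ on $[r,16r]$, and apply Proposition \ref{DvsF} to get $Q_{k,\psi_1}(r)\le \frac38 F_k(4r)+C\eps_k$ and $Q_{k,\psi_2}(4r)\ge \frac38 F_k(4r)-C\eps_k$, then let $k\to\infty$. Your slightly larger annulus, compactly containing $\{1/(32r)<u<2/r\}$, is a harmless and if anything safer choice.
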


\begin{proof} Let $\an$ be a Euclidean annulus which compactly contains $\{1/(16r) < u < 1/r\}$. Choose smooth metrics $g_k$ converging uniformly to $g$ on $\an$ with $R_{g_k} \ge -\eps_k$. Let $u_k$ be the $g_k$-harmonic function on $\an$ which equals $u$ on the boundary of $\an$.  As we have already seen in Proposition \ref{W1pConvergence}, the functions $u_k$ converge to $u$ in $W^{1,p}_{\text{loc}}(\an)$. Moreover, the $F_k$ function associated to $u_k$ satisfies 
\[
F_k(t_2) - F_k(t_1) \ge -C \eps_k 
\]
for all $r \le t_1 < t_2 \le 16r$, where $C$ is a positive constant independent of $k$.  Hence by Proposition \ref{DvsF}, we can estimate 
\begin{align*}
Q_{k,\psi_1}(r) &= \int_0^1 \frac{\psi_1(v)}{(1+v)^2}\left[\int_1^2 \frac{F_k((1+v)rw)}{w^3}\, dw\right] \,dv\\
&\le \int_0^1 \frac{\psi_1(v)}{(1+v)^2}\left[\int_1^2 \frac{F_k(4r) + C\eps_k}{w^3}\, dw\right] \,dv = \frac{3}{8} F_k(4r) + C\eps_k. 
\end{align*} 
Likewise we can estimate 
\begin{align*}
    Q_{k,\psi_2}(4r) &= \int_0^1 \frac{\psi_2(v)}{(1+v)^2}\left[\int_1^2 \frac{F_k((1+v)4rw)}{w^3}\, dw\right]\, dv\\
    &\ge \int_0^1 \frac{\psi_2(v)}{(1+v)^2}\left[\int_1^2 \frac{F_k(4r) - C\eps_k}{w^3}\, dw\right]\, dv = \frac 3 8 F_k(4r) - C\eps_k. 
\end{align*}
Since $Q_{k,\psi_1}(r) \to Q_{\psi_1}(r)$ and $Q_{k,\psi_2}(4r)\to Q_{\psi_2}(4r)$, we can pass to the limit in the above inequalities to deduce that 
\[
Q_{\psi_1}(r) \le Q_{\psi_2}(4r). 
\]
Interchanging the roles of $\psi_1$ and $\psi_2$ gives the opposite inequality, and the proposition follows.  
\end{proof}

This immediately implies that $m(g,u,\psi)$ does not depend on $\psi$. 

\begin{corollary}
For any admissible choices of $\psi_1$ and $\psi_2$ we have $m(g,u,\psi_1) = m(g,u,\psi_2)$. 
\end{corollary}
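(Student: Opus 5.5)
The plan is to combine the preceding proposition with the normalization built into Definition \ref{defn-harmonic-mass-I}. Let $\psi_1,\psi_2$ be admissible bump functions. The harmonic mass is unchanged when $\psi$ is replaced by $\lambda\psi$ for $\lambda>0$. Indeed, $c_{\lambda\psi}=\lambda c_\psi$ and $\theta$ is linear in $\psi$, so $D_{\lambda\psi}(r)=\lambda D_\psi(r)$. The normalizing factor $[3\pi\int_0^1 \lambda\psi(s)(1+s)^{-2}\,ds]^{-1}$ scales by $\lambda^{-1}$, so the two factors of $\lambda$ cancel. Since $\psi_i$ is non-negative and not identically zero, $\int_0^1 \psi_i(s)(1+s)^{-2}\,ds>0$. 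We may therefore rescale and assume both integrals equal $1$, which is exactly the setting of the preceding proposition.

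Under this normalization, $m(g,u,\psi_i)=(3\pi)^{-1}\lim_{r\to\infty}Q_{\psi_i}(r)$. Each limit exists in $(-\infty,\infty]$ by Corollary \ref{corollary-well-defined}, since $Q_{\psi_i}$ is non-decreasing by Theorem \ref{theorem-monotonicity}. The proposition gives $Q_{\psi_1}(r)\le Q_{\psi_2}(4r)$ for every $r>0$. Letting $r\to\infty$, and noting that $4r\to\infty$ as well, we obtain
\[
\lim_{r\to\infty}Q_{\psi_1}(r)\le \lim_{r\to\infty}Q_{\psi_2}(r).
\]
This inequality makes sense in the extended reals: if the left side is $+\infty$, the right side is forced to be $+\infty$ too. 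The symmetric inequality $Q_{\psi_2}(r)\le Q_{\psi_1}(4r)$ gives the reverse bound, so the two limits agree. Multiplying by $(3\pi)^{-1}$ gives $m(g,u,\psi_1)=m(g,u,\psi_2)$, including the case where both equal $+\infty$.

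There is no real obstacle here. The only points needing care are the scaling invariance in the first paragraph, which lets us drop the normalization hypothesis, and the handling of infinite limits, which is covered by monotonicity.
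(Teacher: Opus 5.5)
Your proposal is correct and follows essentially the same route as the paper: rescale so that $\int_0^1 \psi_i(s)(1+s)^{-2}\,ds = 1$, then apply $Q_{\psi_1}(r)\le Q_{\psi_2}(4r)$ and its symmetric counterpart and pass to the limit. Your explicit check that $D_{\lambda\psi}=\lambda D_\psi$ and your handling of the value $+\infty$ just spell out steps the paper leaves implicit.
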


\begin{proof}
Note that replacing $\psi$ with a constant multiple of $\psi$ does not change $m(g,u,\psi)$. Therefore, we can assume without loss of generality that  $\psi_1$ and $\psi_2$ satisfy 
\[
\int_0^1 \frac{\psi_1(s)}{(1+s)^2}\, ds = \int_0^1 \frac{\psi_2(s)}{(1+s)^2}\, ds = 1. 
\]
We have already seen that 
\[
m(g,u,\psi_i) := \frac{1}{3\pi} \lim_{r\to \infty} Q_{\psi_i}(r)
\]
exists and belongs to $[0,\infty]$ for $i=1,2$. The previous proposition then implies that 
\[
m(g,u,\psi_1) \le  m(g,u,\psi_2)\le  m(g,u,\psi_1)
\]
and it follows that $m(g,u,\psi_1) = m(g,u,\psi_2)$. 
\end{proof}

Next, we show that $m(g,u) = m(g,u,\psi)$ does not depend on $u$ for fixed $\psi$. To show this, we aim to prove Theorem \ref{theorem-coordinates}. In fact, we will prove the following refinement of Theorem \ref{theorem-coordinates} which shows that $m(g,u)$ can be expressed purely in terms of $\psi$ and the metric coefficients near infinity.

\begin{theorem} \label{theorem-coordinates2} Let $g$ be a continuous asymptotically flat metric on $\R^3$ with non-negative scalar curvature in the sense of approximations. Let $u$ be a Green's function for $\lap_g$. 
Let $\an(r) = B(0,4r)-B(0,r)$. For every smooth, non-negative bump function $\psi\not\equiv 0 \in C^\infty_c((0,1))$, there is a function $\eta = \eta_\psi \in C^\infty_c((1,4))$ explicitly computable from $\psi$ such that    the harmonic mass satisfies
    \begin{align*}
        \left[3\pi \int_0^1 \frac{\psi(s)}{(1+s)^2}\, ds\right]m(g,u,\psi) &= \lim_{r\to \infty}\bigg[ \frac 1 r \int_{\an(r)} \left(\frac{1}{\vert x\vert} \eta\left(\frac{\vert x\vert}{r}\right) + \frac 1 r \eta'\left(\frac{\vert x\vert}{r}\right) \right)  \delta^{ij}(g_{ij}-\delta_{ij}) \,dx  \\
&\qquad \qquad  + \frac 1 r \int_{\an(r)} \left(\frac{1}{\vert x\vert} \eta\left(\frac{\vert x\vert}{r}\right) - \frac 1 r \eta'\left(\frac{\vert x\vert}{r}\right)\right) (g_{ij}-\delta_{ij}) \frac{x^ix^j}{\vert x\vert^2}\,dx\bigg].
\end{align*} 
In particular, $m(g,u,\psi)$ does not depend on $u$ for fixed $\psi$.  
\end{theorem}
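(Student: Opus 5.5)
We will rescale to the annulus $\an = B(0,4)-B(0,1)$, linearize the functional $\mathcal D$ from Section \ref{section-pos} around the Euclidean pair $(g_{\text{euc}}, \vert x\vert^{-1})$, and show that the leading term of $rD(r)$ as $r\to\infty$ is a linear functional of the metric perturbation only. The harmonic correction to $u$ drops out after an integration by parts.

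The first step is the rescaling at infinity. Set $g_r(x) = g(rx)$ and $u_r(x) = r u(rx)$, so that $D(r) = \mathcal D(g_r,u_r)$ exactly as in Section \ref{section-pos}, with $h_r := g_r - \delta = O(r^{-\tau})$ uniformly on $\an$. Next we need an expansion of $u$ at infinity. Write $u = \vert x\vert^{-1} + v$ on the exterior region, so that $\lap v = \div(X)$ with $X = (\delta - A)\grad u$. Using the two-sided bounds on $u$, Meyers' estimate and the weighted theory for $\lap w = \div X$ from Appendix \ref{Appendix-Mapping}, we aim to show $v_r := u_r - \vert x\vert^{-1}$ satisfies $\|v_r\|_{W^{1,p}(\an)} = O(r^{-\tau'})$ for some $\tau'>\frac12$. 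The mass normalization constant in front of $\vert x\vert^{-1}$ is fixed by the flux condition: the flux equals $4\pi$. A constant term in $v$ must be handled as well; it is either absent since $u\to 0$ at infinity, or it contributes only at higher order.

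We then Taylor expand using Appendix \ref{D-appendix}:
\[
\mathcal D(g_{\text{euc}}+h, \vert x\vert^{-1} + f) = L_1(h) + L_2(f) + O\big(\|h\|_{C^0}^2 + \|f\|_{W^{1,p}}^2\big),
\]
where $\mathcal D(g_{\text{euc}},\vert x\vert^{-1}) = 0$. The quadratic error is $O(r^{-2\tau'})$, so it is $o(r^{-1})$ precisely because $\tau' > \frac12$. This is where the decay hypothesis enters, and why $rD(r)$ is governed by the linear terms. The linear term in $f$ has the form $L_2(f) = \int_{\an}\big(\varphi'(\vert x\vert^{-1})\vert\grad \vert x\vert^{-1}\vert^3 f + 3\varphi\vert\grad\vert x\vert^{-1}\vert\,\grad\vert x\vert^{-1}\cdot\grad f\big)\,dx$. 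Integrating by parts turns this into $\int_{\an} \zeta(\vert x\vert)\,\partial_\nu f\,dx$-type expressions, a radial weight against $\lap f$ plus boundary-free terms since $\varphi$ is compactly supported. The key point is that the Euler--Lagrange structure reduces $L_2(f)$ to $\int \Phi\, \lap f$ for a radial $\Phi$ compactly supported in the annulus. For this step one presumably uses that $\vert x\vert^{-1}$ is a critical point in the appropriate sense, and the normalization $c_\psi$ is chosen exactly to make the constant vanish. Then $\lap f = \div X_r$, with $X_r$ linear in $h_r$ plus quadratic errors, so after a further integration by parts $L_2(f)$ becomes $-\int \grad\Phi\cdot X_r$, which is itself linear in $h_r$ up to $o(r^{-1})$.

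Combining both linear contributions gives $rD(r) = r\int_{\an}\big(\alpha(\vert x\vert)\,\tr h_r + \beta(\vert x\vert)\, h_r(\hat x,\hat x)\big)\,dx + o(1)$, and undoing the scaling yields the displayed formula. Identifying $\alpha,\beta$ with $\vert x\vert^{-1}\eta \pm \eta'$ defines $\eta$ from $\psi$; this is a computation carried out in Appendix \ref{D-appendix}. Since the right-hand side involves only $g$, independence of $u$ follows. The main obstacle is the expansion step, namely obtaining $W^{1,p}$ control of $v_r$ at rate better than $r^{-1/2}$ with only $C^0$ decay of $g$. I would attempt it by writing $u - \vert x\vert^{-1}$ as the solution of $\lap w = \div X$ with $X\in L^p$ weighted decay $\vert x\vert^{-2-\tau}$, and invoking the weighted mapping properties together with local Meyers estimates on dyadic annuli.
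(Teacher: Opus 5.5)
Your outline follows the paper's route: a rate $\tau'>\frac12$ for $u_r-|x|^{-1}$ in $W^{1,p}(\an)$ from the weighted theory for $\lap w=\div X$, then the Fr\'echet expansion of $\mathcal D$, then integration by parts against a compactly supported radial function, then the divergence-form equation and the first-order expansion of $g^{ij}\sqrt{\det g}$. However, the step that actually eliminates $u$ is only asserted, and the mechanism you suggest for it is not the right one.

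After integrating by parts, the $f$-linear part of $L$ is $\int_{\an}\Theta(|x|)\,f\,dx$ with $\Theta(s)=-6s^{-5}\varphi(s^{-1})-2s^{-6}\varphi'(s^{-1})$. A radial $\Phi$ with $\lap\Phi=\Theta$ and compact support in $\an$ exists if and only if $\Theta$ integrates to zero against the two radial harmonic functions $1$ and $|x|^{-1}$.
\begin{itemize}
\item The identity against $|x|^{-1}$ is scale invariance: $\mathcal D(\delta,\lambda|x|^{-1})$ is independent of $\lambda$. In the paper it appears as the total derivative $-\frac{d}{dq}(2q^3\varphi(q))$.
\item The identity against $1$ reduces to $\int_0^\infty q\varphi(q)\,dq=0$. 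This holds only because of the specific combination $\frac12\psi(\frac1{2q}-1)-\psi(\frac1q-1)$ in $\theta$: the factor $\frac12$ exactly compensates the dilation by $2$.
\end{itemize}
The constant $c_\psi$ plays no role here; it only makes $\mathcal D(\delta,\rho)=0$ and drops out of $L$. Also, $\rho$ is not a critical point of $\mathcal D(\delta,\cdot)$, since $\Theta\not\equiv0$. The correct form of your heuristic is that $L(0,\cdot)$ vanishes on Euclidean-harmonic $f$, which for radial $\Theta$ is equivalent to exactly these two moment identities. If either failed, you would be left with $r$ times fixed radial averages of $f_r$, which you only control by $O(r^{1-\tau'})$, and the argument would not close. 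These identities are the real content of the theorem and have to be verified, not presumed.

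The packaging into $\eta$ also needs proof, and it is not in Appendix \ref{D-appendix}, which only computes $L$. You end up with two radial weights $\alpha,\beta$ multiplying $\tr k_r$ and $k_r(\bd_s,\bd_s)$. These have the form $s^{-1}\eta\pm\eta'$ for a single $\eta$ if and only if $(s(\alpha+\beta))'=\alpha-\beta$; equivalently, the limiting functional must annihilate $k=\mathcal L_X\delta$ for $X$ compactly supported in $\an$. You also need $\eta\in C^\infty_c((1,4))$. The paper checks this by setting $\eta=-\frac12 s^{-5}\varphi(s^{-1})+\frac14 s^{-1}\chi'(s)$ and using the ODE $\lap\chi=\Theta$ to get $\eta'=s^{-6}\varphi(s^{-1})-\frac34 s^{-2}\chi'(s)$. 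Compact support of $\eta$ again comes from compact support of $\chi'$, i.e.\ from the moment identity against $1$.

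A smaller point in the decay step: a constant $a$ in $u-|x|^{-1}$ cannot ``contribute only at higher order''. It contributes $ra$ to $f_r$, so it must be excluded. The paper does this, together with the $|x|^{-1}$ coefficient of the harmonic remainder, by first proving the qualitative convergence $u_r\to|x|^{-1}$ in $W^{1,p}(\an)$.
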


Let $u$ be a Green's function for $\lap_g$ with pole at $x_0$ and fix an admissible $\psi$.  Recall that we are assuming the decay rate $\vert g(x) - g_{\text{euc}}(x)\vert = O(\vert x\vert^{-\tau})$ for some $\tau > \frac 1 2$. The first step is to obtain convergence of $r u(rx)$ to $\vert x\vert^{-1}$ with a suitable rate. Before reading the proof, the reader may wish to review Appendix \ref{Appendix-Mapping}. 

\begin{prop}
\label{prop-decay}
We have 
\[
\left\|r u(rx) - \frac{1}{\vert x\vert}\right\|_{W^{1,p}(\an)} = O(r^{-\beta}). 
\]
for some $\beta > 1/2$. 
\end{prop}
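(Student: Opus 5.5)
Write $h = g - g_{\text{euc}}$, so that $|h(x)| \le C|x|^{-\tau}$ for $|x|$ large, and set $A(x) = g^{ij}\sqrt{\det g}$. Then $A - I =: -B$ satisfies $|B(x)| \le C|x|^{-\tau}$ near infinity and $B$ is bounded everywhere. The Green's function $u$ solves $\div(A\grad u) = 0$ away from $x_0$, with flux $4\pi$. The plan is to compare $u$ with the Euclidean fundamental solution $\Gamma(x) = |x|^{-1}$ through the equation $\lap w = \div(X)$ on $\R^3$, using the mapping properties from Appendix \ref{Appendix-Mapping}.

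\textbf{Step 1: a global equation for $v = u - \Gamma$.} Since $\div(\grad\Gamma) = -4\pi\delta_0$ and $\div(A\grad u) = -4\pi\delta_{x_0}$, I would write
\[
\lap v = \div(B\grad u) + \div(\text{compactly supported correction}),
\]
where the correction accounts for moving the pole from $x_0$ to $0$ and for the local singularity. More concretely, I would cut off near the pole: take $\chi$ equal to $1$ outside a large ball $B(0,R_0)$ and set $\tilde u = \chi u$. Then $\div(A\grad\tilde u) = \div(Y) + f$ with $Y, f$ compactly supported and in $L^p$. The flux normalization ensures that the total charge of the compactly supported part equals that of $\Gamma$ outside a large ball. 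It follows that $\lap(\tilde u - \Gamma\chi) = \div(X)$, where
\[
X = B\grad\tilde u + X_c ,
\]
$X_c$ is compactly supported, and the monopole term cancels. This cancellation is exactly what the normalization of the flux to $4\pi$ is for.

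\textbf{Step 2: weighted estimates and bootstrap.} The two-sided bound $C_1/|x-x_0| \le u \le C_2/|x-x_0|$ together with Meyers' estimate on dyadic annuli (rescaled as in the positivity proof) gives $|\grad u| \lesssim |x|^{-2}$ in the averaged $L^p$ sense on annuli. Hence $X$ lies in a weighted space with $|X| \lesssim |x|^{-2-\tau}$ in averaged $L^p$ on annuli. I would then invoke the weighted mapping result of Appendix \ref{Appendix-Mapping}: for $X$ decaying like $|x|^{-2-\tau}$ with $0<\tau<1$, the unique decaying solution $w$ of $\lap w = \div X$ satisfies
\[
|\grad w| \lesssim |x|^{-2-\tau} \quad\text{and}\quad |w| \lesssim |x|^{-1-\tau}
\]
in averaged $L^p$ norms on dyadic annuli. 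By Liouville, $\tilde u - \Gamma\chi - w$ is harmonic and decays at infinity, so it vanishes. Rescaling then gives
\[
\bigl\|ru(rx) - |x|^{-1}\bigr\|_{W^{1,p}(\an)} = O(r^{-\tau}),
\]
so one may take $\beta = \tau > 1/2$. (If the mapping theorem only holds for $\tau$ in a restricted range, I would first decrease $\tau$ slightly to some $\beta \in (1/2, \min(\tau,1))$.)

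\textbf{Main obstacle.} The hard step is Step 2. The naive bound $\grad u = O(|x|^{-2})$ only holds in averaged $L^p$ form, not pointwise, since $g$ is merely continuous. So the mapping theorem must be applied in dyadic-annulus $L^p$ norms, with care that the operator $X \mapsto \grad w$ is bounded on these weighted spaces for weights avoiding the exceptional exponents. The decay rate $-2-\tau$ lies strictly between $-3$ and $-2$, which is the non-exceptional range. The residual term $\div(B\grad(\Gamma\chi))$ versus $\div(B\grad\tilde u)$ causes no circularity, because only the a priori $O(|x|^{-2})$ bound on $\grad u$ is used. The $L^p$ part of the $W^{1,p}$ norm then follows from the gradient bound and the decay of $w$.
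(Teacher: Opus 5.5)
Your argument is correct, but it treats the monopole term differently from the paper.

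\textbf{The paper's route.} The paper first proves the qualitative blow-down convergence $ru(rx)\to|x|^{-1}$ in $W^{1,p}(\an)$. This uses Meyers compactness, the B\^ocher/Liouville classification, and passage of the flux to the limit. It then solves $\lap w=\div X$ with $X$ equal to $B\grad u$ only \emph{outside} a compact set. So $h=(u-|x|^{-1})-w$ is merely harmonic near infinity, with an expansion $a+b|x|^{-1}+O(|x|^{-2})$. The qualitative convergence is what forces $a=b=0$.

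\textbf{Your route.} You instead build a global equation on $\R^3$. After cutting off at the pole, the leftover compactly supported source has zero total mass. This is because $\int A\grad u\cdot\grad\phi\,dx=4\pi$ for every compactly supported $\phi\equiv 1$ near $x_0$. That identity follows from the level-set flux normalization by testing with $\Phi(u)$ and using the co-area formula, plus harmonicity away from the pole. Hence the source can be written as $\div X_c$ with $X_c$ compactly supported in $L^p$. Then $\tilde u-\Gamma\chi-w$ is an entire harmonic function equal to an $O(|x|^{-1})$ term plus a term small in averaged $L^p$ on balls $B(x,|x|/2)$. It vanishes by the mean value property and Liouville.

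\textbf{Comparison.} Your route removes the preliminary compactness and identification step. It uses the $4\pi$ normalization directly in the equation rather than through a limit. The cost is solving the zero-mean divergence problem for $X_c$ and checking the weak charge identity. The paper only needs $X$ to agree with $B\grad u$ near infinity, but pays with the exterior expansion and the blow-down argument.

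Both routes get the rate from Proposition \ref{weight-estimate-I} with $\delta=-1-\beta$ and $\beta<\min\{\tau,1\}$. The endpoint $\beta=\tau$ you state first is not reachable with these weighted $L^p$ spaces: the dyadic sum defining $\|X\|_{L^p_{\delta-1}}$ diverges. For $\tau\ge 1$, the dipole term caps the rate at $r^{-1}$. Your fallback choice of $\beta$ is exactly the paper's and suffices for the statement.
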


\begin{proof} Let $u_r(x) = ru(rx)$. Let $A(x) = g^{ij}\sqrt{\det g}$ and let $A_r(x) = A(rx)$. 
First we show that $u_{r}\to \vert x\vert^{-1}$ in $W^{1,p}(\R^3-\{0\})$. 
Note $u_r$ solves 
\[
\div(A_r(x)\grad u_r) = 0
\]
on $\R^{3} -\{x_0/r\}$. Since $A_r\to I$ uniformly as $r\to \infty$, Meyer's $L^p$ gradient estimate \cite[Theorem 2]{meyers1963p} implies that 
\[
\|\grad u_{r}\|_{L^p(\an_2)} \le C \|u_{r}\|_{L^2(\an_1)} \le C
\]
for any $\an_2 \subset \an_1$. 
Passing to a subsequence and using a diagonal argument, we have $u_{r_k}\weak w$ weakly in $W^{1,p}(\R^3-\{0\})$. Moreover, $w$ solves the equation $\div(\grad w) = 0$ in the weak sense and so $w$ is smooth and harmonic in $\R^3 - \{0\}$. The function $w$ also inherits the $C^0$ bounds 
\[
\frac{c}{\vert x\vert} \le w(x) \le \frac{C}{\vert x\vert }
\]
and it follows that $w(x) = a\vert x\vert^{-1}$ for some constant $a$. Using Meyer's $L^p$ gradient estimate again \cite[Theorem 2]{meyers1963p}, one can show that $u_{r_k}\to w$ strongly in $W^{1,p}_{\text{loc}}(\R^3 - \{0\})$. One then has convergence of the fluxes and it follows that $a = 1$. Since the limit is independent of the subsequence, we have $u_{r} \to \vert x\vert^{-1}$ in $W^{1,p}(\an)$ as $r\to \infty$. 

Next, we look to obtain the rate. Fix any $\frac 1 2 < \beta < \min\{\tau,1\}$. Define $f = u(x) - \vert x\vert^{-1}$. Let $\an(r) = B(0,4r)-B(0,r)$. First, we aim  to show that 
\[
r^{1-\frac 3 p} \left[\int_{\an(r)} \vert f\vert^p\, dx\right]^{1/p} \le C r^{-\beta}. 
\]
Equivalently, after rescaling to the fixed annulus $\an$ via $x = ry$, we want to show 
\[
\left[\int_{\an} \left\vert u_r(y) - \frac{1}{\vert y\vert }\right\vert^p  \, dy\right]^{1/p} \le   Cr^{-\beta}. 
\]
Observe that $f$ solves 
\[
\lap f = \div(\grad u) = \div((I-A)\grad u) = \div(B(x) \grad u)
\]
where $B(x) = (I-A)(x)$ satisfies $\vert B(x)\vert = O(\vert x\vert^{-\tau})$. 
Let $X$ be a vector field on $\R^3$ which is identically 0 on $B_1$ and which agrees with $B(x)\grad u$ outside a compact set. We know that $\grad u_r$ is uniformly bounded in $L^p(\an)$: 
\[
\int_{\an} \vert \grad u_r(y)\vert^p\, dy \le C. 
\]
Equivalently, we have 
\[
\int_{\an(r)} \vert \grad u(x)\vert^p \, dx \le C r^{3-2p}. 
\]
It follows that 
\[
\|X\|_{L^p(\an(r))} \le r^{-\tau} \|\grad u\|_{L^p(\an(r))} \le C r^{3/p-2-\tau}. 
\]
This implies that $X$ belongs to a weighted $L^p$ space: 
\begin{align*}
    \int_{\R^3 - B_1} \vert X\vert^p (1+\vert x\vert)^{(2+\beta)p-3}\, dx &\le C \sum_{k=0}^\infty (4^k)^{(2+\beta)p-3}\int_{\an(4^k)} \vert X\vert^p\, dx\\
    &\le C \sum_{k=0}^\infty (4^k)^{(\beta-\tau)p}
\end{align*}
which converges since $\beta < \tau$.

In the notation of Proposition \ref{weight-estimate-I}, we have $X \in L^p_{\delta-1}$ for $\delta = -1 - \beta$. Since $-2 < \delta < -1$, we can apply Proposition \ref{weight-estimate-I} to deduce the existence of a weak solution $w$ to $\lap w = \div(X)$ satisfying the estimate 
\begin{equation}
\label{bound}
\int_{\R^3} \vert w\vert^p (1+\vert x\vert)^{p(1+\beta)-3}\, dx + \int_{\R^3} \vert \grad w\vert^p (1+\vert x\vert)^{p(2+\beta)-3}\, dx \le C\|X\|_{L^p_{\delta-1}}. 
\end{equation}
Now consider the function $h = f - w$, which is harmonic outside a compact set. Letting $w_r(x) = rw(rx)$, we have 
\[
r^{p\beta} \int_{\an} \vert w_r(y)\vert^p \,dy = r^{p(1+\beta)-3}\int_{\an(r)} \vert w(x)\vert^p\, dx \le \int_{\an(r)} \vert w\vert^p (1+\vert x\vert)^{p(1+\beta)-3}\, dx. 
\]
Therefore estimate \eqref{bound} implies that  
\[
r^\beta \|w_r\|_{L^p(\an)} \to 0
\]
as $r\to \infty$.  By the already established convergence of $f$, we also have 
\[
\|h_r\|_{L^p(\an)} \to 0
\]
as $r\to \infty$ where $h_r(x) = rh(rx)$. Now $h$ has an expansion near infinity of the form $h(x) = a + b\vert x\vert^{-1} + O(\vert x\vert^{-2})$ and the above estimate for $h_r$ implies that necessarily $a = 0$ and $b = 0$. Thus $h(x) = O(\vert x\vert^{-2})$. In particular, $h$ also satisfies $\|h_r\|_{L^p(\an)} \le C r^{-\beta}$. Therefore, we obtain the $L^p$ decay 
\[
\|f\|_{L^p(\an(r))}\le C r^{3/p-1-\beta}.
\]
Again this implies that 
\[
\left[\int_{\an} \left\vert u_r(y) - \frac{1}{\vert y\vert}\right\vert^p \, dy\right]^{1/p} \le C r^{-\beta},
\]
as desired. Finally, the gradient estimate 
\[
\left[\int_{\an} \left\vert \grad u_r(y) - \grad \frac{1}{\vert y\vert}\right\vert^p\, dy\right]^{1/p}\le Cr^{-\beta}
\]
is obtained similarly using the estimate \eqref{bound} for $w$ and the explicit asymptotic expansion for $h$. 
\end{proof}

Now we can continue with the proof of Theorem \ref{theorem-coordinates2}. Let $\rho(x) = \vert x\vert^{-1}$.  Write $D(r) = \mathcal D(g_r, u_r)$ where $g_r(x) = g(rx)$ and $u_r(x) = ru(rx)$, and 
\[
\mathcal D(h,f) = c_\psi + \int \varphi(f) \vert \grad^h f\vert^3\, dv_h
\]
where $\varphi(f) = \theta(1,f)$ with $\theta$ as in \eqref{eqn-theta}.
It follows from Proposition \ref{D-Frechet} that $\mathcal D$ is Frechet differentiable at $(\delta,\rho)$ with 
\begin{align*} 
\mathcal D(g_r,u_r) &= L(g_r - \delta,u_r - \rho) + O((\|g_r-\delta\|_{C^0}^2 + \|g_r-\delta\|_{C^0} \|u_r-\rho\|_{W^{1,p}} + \|u_r-\rho\|_{W^{1,p}}^2)\\
&= L(g_r-\delta,u_r-\rho) + O(r^{-2\beta}). 
\end{align*}
Hence multiplying by $r$ we have $\lim_{r\to \infty} rD(r) = \lim_{r\to \infty} r L(g_r-\delta,u_r-\rho)$.  Then using the formula for $L$ from Proposition \ref{D-Frechet} we have 
\begin{align*}
r L(g_r-\delta,u_r-\rho) &= r \int_{\an} \varphi'(\rho) \vert \grad \rho\vert^3 (u_r-\rho)\, dx\\
&\qquad + 3r \int_{\an} \varphi(\rho) \vert \grad \rho\vert \la \grad \rho, \grad(u_r-\rho)\ra\, dx\\
&\qquad + \frac r 2 \int_{\an} \varphi(\rho) \left[(\tr(g_r -\delta) \vert \grad \rho\vert^3 - 3 \vert \grad \rho\vert (g_r-\delta)(\grad \rho, \grad \rho)\right]\, dx. 
\end{align*} 
Let us focus attention on the integral on the second line. Integrating by parts and using the compact support of $\varphi(\rho)$, we see that  
\begin{align*} 
& r \int_{\an} \varphi(\rho) \vert \grad \rho\vert \la \grad \rho, \grad(u_r-\rho)\ra\, dx \\
&\qquad = -\int_{\an} r (u_r-\rho)\div(\varphi(\rho) \vert \grad \rho\vert \grad \rho)\, dx \\
&\qquad = -\int_{\an} r(u_r-\rho) \left[\varphi'(\rho) \vert \grad \rho\vert^3 + \varphi(\rho) \la \grad \vert \grad \rho\vert, \grad \rho\ra\right]\, dx. 
\end{align*} 
Combined with the integral on the first line, we get 
\begin{align*} 
& r \int_{\an} \varphi'(\rho) \vert \grad \rho\vert^3 (u_r-\rho)\, dy + 3r \int_{\an} \varphi(\rho) \vert \grad \rho\vert \la \grad \rho, \grad(u_r-\rho)\ra\, dy\\
&\qquad =  \int_{\an} \varphi'\left(\frac 1 {\vert x\vert}\right) \frac{1}{\vert x\vert^6} w_r\, dy - 3\int_{\an} \left[\varphi'\left(\frac{1}{\vert x\vert}\right) \frac{1}{\vert x\vert^6} + \varphi\left(\frac{1}{\vert x\vert}\right) \frac{2}{\vert x\vert^5}\right] w_r\, dx\\
&\qquad = \int_{\an} \left[-6 \varphi\left(\frac 1 {\vert x\vert} \right) \frac 1{\vert x\vert^5} - 2 \varphi'\left(\frac{1}{\vert x\vert}\right) \frac{1}{\vert x\vert^6}\right] w_r\, dx.  
\end{align*} 
where $w_r = r(u_r-\rho)$.

We can choose a smooth, radial function $\chi$ compactly supported in $\an$ so that 
\[
\lap \chi = -6 \varphi\left(\frac 1 {\vert x\vert}\right) \frac 1{\vert x\vert^5} - 2\varphi'\left(\frac{1}{\vert x\vert}\right) \frac{1}{\vert x\vert^6}.
\]
Indeed, to see that this is possible, denote the right hand side above by $\Theta(s)$ where $s = \vert x\vert$. Note that the Laplacian of a radial function $\chi(s)$ is given by 
\[
\lap \chi = \frac{1}{s^2}(s^2\chi'(s))'.
\]
Therefore, every solution solution to $\lap \chi = \Theta(s)$ satisfies 
\[
\chi'(s) = s^{-2} \left[\int_0^s t^2\Theta(t)\, dt\right].
\]
\begin{align*}
\int _0^4t^2\Theta(t)\, dt =\int_0^\infty t^2 \Theta(t)\, dt &= \int_0^\infty -6 t^{-3} \varphi(t^{-1}) - 2 t^{-4} \varphi'(t^{-1})\, dt \\
&= \int_0^\infty -6 q \varphi(q) - 2 q^2 \varphi'(q)\, dq \\
&= \int_0^\infty -2q\varphi(q) -\frac{d}{dq} (2q^2 \varphi(q))\, dq \\
&= \int_0^\infty -2q\varphi(q) \, dq
\end{align*}
and we further have 
\begin{align*}
\int_0^\infty q\varphi(q) \, dq &= \int_0^\infty q^{-2} \left[\frac 1 2 \psi\left(\frac{1}{2q}-1\right) - \psi\left(\frac{1}{q}-1\right)\right]\, dq \\
&= \int_0^\infty \frac 1 2 \psi\left(\frac{t}{2}-1\right) - \psi(t-1)\, dt = 0.
\end{align*}  
Thus $\chi'$ is compactly supported in $(1,4)$. Now define
\[
\chi(s) = \int_0^s t^{-2} \left[\int_0^t v^2\Theta(v)\, dv\right]\, dt.
\]
It is easy to see $\chi(1) = 0$. We also have  
\begin{align*}
\chi(4) = \lim_{t\to\infty}\chi(t) &= \int_0^\infty t^{-2} \left[\int_{0}^t v^2\Theta(v)\, dv\right]\,dt \\
&= \int_0^\infty v^2\Theta(v) \left[\int_v^\infty  t^{-2}\, dt\right]\, dv \\
&=  \int_0^\infty v\Theta(v)\, dv. 
\end{align*}  
Then we further calculate that 
\begin{align*}
\int_0^\infty v \Theta(v)\, dv &= \int_0^\infty -6 v^{-4} \varphi(v^{-1}) - 2 v^{-5} \varphi'(v^{-1})\, dv \\
&= \int_0^\infty -6q^2 \varphi(q) - 2 q^3 \varphi'(q)\, dq \\
&= \int_0^\infty -\frac{d}{dq}(2 q^3 \varphi(q))\, dq = 0
\end{align*}
where we made the substitution $q = v^{-1}$ and used the compact support of $\varphi$.  Thus $\chi(4) = 0$ as well, and we have demonstrated the existence of $\chi$. 

We now obtain 
\begin{align*}
\int_{\an} w_r \lap \chi\, dx &= - \int_{\an} \la \grad \chi, \grad w_r\ra\, dx.
\end{align*}
Write $A_r = g_r^{ij} \sqrt{\det g_r}$ so that $\div(A_r(x) \grad u_r) = 0$ in the weak sense. We can write $A_r = I + B_r$. Then note that 
\begin{align*} 
-\int_{\an} \la \grad \chi, \grad w_r\ra \, dx &= -\int_{\an} r \la \grad \chi, (A_r + I - A_r)\grad u_r - \grad \rho \ra \,dx \\
&= \int_{\an} \la \grad \chi, r B_r \grad u_r\ra \, dx \\
&= \int_{\an} \la \grad \chi, rB_r(\grad u_r - \grad \rho) + r B_r \grad \rho\ra\, dx \\
&= \int_{\an} \la \grad \chi, rB_r \grad \rho\ra\, dx  + o(1). 
\end{align*} 
We used the fact that $\chi$ is compactly supported in $\an$ from the first line to the second line.
We have an expansion
\[
B_r = g_r^{ij} \sqrt{\det g_r} - \delta^{ij} = \frac 1 2 (\tr k_r) I - k_r + O(\|k_r\|_{C^0}^2)
\]
where $k_r = g_r - \delta$.  This yields 
\begin{align*}
\int_{\an}\la \grad \chi, r B_r \grad \rho\ra \, dx &= r \int_{\an}\left( \la \grad \chi, \frac 1 2 (\tr k_r) \grad \rho\ra -k_r(\grad \chi, \grad \rho)\right) \, dx + o(1).
\end{align*}
Then returning to the original formula and writing $s=\vert x\vert$ we have 
\begin{align*} 
r L(g_r -\delta, u_r - \rho) &=  \int_{\an} \frac r 2 (\tr k_r) \la \grad \chi, \grad \rho\ra -  r k_r(\grad \chi, \grad \rho) \, dx \\
&\qquad + \frac 1 2 \int_{\an} r \varphi(\rho) \tr(k_r) \vert \grad \rho\vert^3 - 3r \varphi(\rho) \vert \grad \rho\vert k_r(\grad \rho,\grad \rho) \, dx\\
&= \int_{\an} \left(\frac{1}{2}s^{-6} \varphi\left(\frac 1 s\right) - \frac 1 2 s^{-2} \chi'(s)\right) r \tr k_r \, dx\\
&\qquad + \int_{\an} \left(-\frac 3 2 s^{-6} \varphi\left(\frac 1 s\right) +  s^{-2} \chi'(s) \right) r k_r(\bd_s,\bd_s)\, dx.
\end{align*} 
Now define the function 
\[
\eta(s) = -\frac 1 2 s^{-5} \varphi(s^{-1}) + \frac 1 4 s^{-1}\chi'(s). 
\]
Then $\eta$ is compactly supported in $(1,4)$ and 
\begin{align*} 
\eta'(s) &= \frac 5 2 s^{-6} \varphi(s^{-1}) + \frac 1 2 s^{-7} \varphi'(s^{-1}) - \frac 1 4 s^{-2}\chi'(s) + \frac 1 4 s^{-1} \chi''(s) \\
&= \frac 5 2 s^{-6} \varphi(s^{-1}) + \frac 1 2 s^{-7} \varphi'(s^{-1}) - \frac 1 4 s^{-2}\chi'(s) + \frac 1 4 s^{-1} \left[-2 s^{-6}\varphi'(s^{-1}) - 6 s^{-5} \varphi(s^{-1}) - 2s^{-1}\chi'(s)\right]\\
&= s^{-6} \varphi(s^{-1}) - \frac 3 4 s^{-2} \chi'(s),
\end{align*} 
where we used the equation satisfied by $\chi$. In particular, it follows that 
\begin{align*}
r L(g_r-\delta,u_r-\rho) = \int_{\an} \left(s^{-1}\eta(s) + \eta'(s) \right) r\tr k_r + \left(s^{-1}\eta(s) - \eta'(s)\right) rk_r(\bd_s,\bd_s)\, dx. 
\end{align*}
Writing this back on $\an(r)$, we have 
\begin{align*}
r L(g_r-\delta,u_r-\rho) = &\int_{\an(r)} \left(\frac{r}{\vert x\vert} \eta\left(\frac{\vert x\vert}{r}\right) + \eta'\left(\frac{\vert x\vert}{r}\right) \right) r \delta^{ij}(g_{ij}-\delta_{ij})\, \frac{dx}{r^3} \\
&\qquad + \int_{\an(r)} \left(\frac{r}{\vert x\vert} \eta\left(\frac{\vert x\vert}{r}\right) - \eta'\left(\frac{\vert x\vert}{r}\right)\right) r (g_{ij}-\delta_{ij}) \frac{x^ix^j}{\vert x\vert^2}\, \frac{dx}{r^3}\\
&= \frac 1 r \int_{\an(r)} \left(\frac{1}{\vert x\vert} \eta\left(\frac{\vert x\vert}{r}\right) + \frac 1 r \eta'\left(\frac{\vert x\vert}{r}\right) \right)  \delta^{ij}(g_{ij}-\delta_{ij}) \,dx  \\
&\qquad + \frac 1 r \int_{\an(r)} \left(\frac{1}{\vert x\vert} \eta\left(\frac{\vert x\vert}{r}\right) - \frac 1 r \eta'\left(\frac{\vert x\vert}{r}\right)\right) (g_{ij}-\delta_{ij}) \frac{x^ix^j}{\vert x\vert^2}\,dx.
\end{align*} 
Thus we have 
\begin{align*}
\lim_{r\to \infty} rD(r) &= \lim_{r\to \infty} \bigg[\frac 1 r \int_{\an(r)} \left(\frac{1}{\vert x\vert} \eta\left(\frac{\vert x\vert}{r}\right) + \frac 1 r \eta'\left(\frac{\vert x\vert}{r}\right) \right)  \delta^{ij}(g_{ij}-\delta_{ij}) \,dx  \\
&\qquad\qquad + \frac 1 r \int_{\an(r)} \left(\frac{1}{\vert x\vert} \eta\left(\frac{\vert x\vert}{r}\right) - \frac 1 r \eta'\left(\frac{\vert x\vert}{r}\right)\right) (g_{ij}-\delta_{ij}) \frac{x^ix^j}{\vert x\vert^2}\,dx\bigg].
\end{align*}
This completes the proof of Theorem \ref{theorem-coordinates2}.

\section{Rigidity}
\label{section-rigidity}

In this final section, we prove the rigidity part of the positive mass theorem. Let $g$ be a continuous asymptotically flat metric on $\R^3$ with non-negative scalar curvature in the sense of approximations. 
Assume that $m(g) = 0$. 
Note this implies that $m(g,u,\psi) = 0$ for all choices of $u$ and $\psi$.  Hence $D(r) = 0$ for all choices of $u$ and $\psi$ and $r$. Fix some $u$ and $\psi$.

\begin{prop}
    There is a constant $c$ such that $\vert \grad^g u\vert = c u^2$ almost-everywhere. 
\end{prop}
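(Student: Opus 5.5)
The plan is the one outlined in the introduction: use the rigidity $D\equiv 0$ to force the error terms in the smooth monotonicity formula to vanish along the approximating sequence, and then pass to the limit using a Poincaré inequality whose constant does not depend on $k$.

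Fix $0<a$ and the annulus $\an$ from Section \ref{section-monotonicity}, so that $\{1/(8a)<u<1/a\}$ is compactly contained in $\an$. Take smooth $g_k\to g$ uniformly on $\an$ with $R_{g_k}\ge -\eps_k$, and let $u_k$ be the $g_k$-harmonic functions with $u_k=u$ on $\bd\an$. By Proposition \ref{W1pConvergence}, $u_k\to u$ in $W^{1,p}_{\text{loc}}(\an)$, so $D_k(r)\to D(r)=0$ for each $r\in[a,2a]$.

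\textbf{Step 1: the error terms in $F_k'$ vanish in the limit.} Proposition \ref{F-derivative} expresses $F_k'(t)$ as $\frac12\int R_{g_k}$ plus nonnegative terms. These include
\[
\int_{\{u_k=1/t\}} \frac{|\grad^{T}|\grad u_k||^2}{|\grad u_k|^2}
\quad\text{and}\quad
\int_{\{u_k=1/t\}} \Big(H_k-\frac{2|\grad u_k|}{u_k}\Big)^2,
\]
with suitable positive powers of $t$ as weights, and possibly a Gauss--Bonnet term that is nonnegative because the level sets are connected. Integrate $F_k'$ over $t\in[t_1,t_2]$ and feed the result through the averaging formula of Proposition \ref{DvsF}. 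Since $r_2D_k(r_2)-r_1D_k(r_1)\to 0$ and the scalar curvature contribution is $\ge -C\eps_k$, the weighted double integral of these nonnegative terms over a $t$-range tends to $0$. The weight $\psi$ is only positive on a subinterval, so I would cover a fixed range $[b_1,b_2]\subset(a,8a)$ by choosing $r_1<r_2$ appropriately, or use several values of $r$. By the co-area formula, this converts to
\[
\int_{U} \big|\grad^{g_k}\big(|\grad u_k|/u_k^2\big)\big|\,dv_k\to 0
\]
on a fixed region $U=\{1/b_2<u<1/b_1\}$ (shrunk slightly, which is allowed since $u_k\to u$ uniformly). To get this bound, write $\grad(|\grad u_k|u_k^{-2})$ in tangential and normal components. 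The tangential part is controlled by the first term. The normal part, $\bd_\nu|\grad u_k| = -H_k|\grad u_k|$ for harmonic $u_k$, combines with $-2|\grad u_k|^2u_k^{-3}$ into $-|\grad u_k|u_k^{-2}\,(H_k-2|\grad u_k|/u_k)$, which is controlled by the second term. Cauchy--Schwarz against the uniformly bounded quantity $\int|\grad u_k|^2$ then turns the weighted $L^2$ statements into $L^1$ smallness.

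\textbf{Step 2: uniform Poincaré inequality and passage to the limit.} Because $g_k\to g$ uniformly, the metrics are uniformly equivalent to the Euclidean one. On a fixed connected Lipschitz subdomain $V\subset U$ (for instance a Euclidean annulus lying between two level sets), I therefore get a Poincaré inequality $\|f-\bar f\|_{L^1(V)}\le C\|\grad f\|_{L^1(V)}$ with $C$ independent of $k$. Applying it to $f_k=|\grad u_k|/u_k^2$ produces constants $c_k$ with $\|f_k-c_k\|_{L^1(V)}\to 0$. Since $f_k\to |\grad^g u|/u^2$ in $L^1(V)$ by the $W^{1,p}$ convergence, the $c_k$ converge to some $c$, and $|\grad^g u|=cu^2$ a.e. on $V$. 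Exhausting $\R^3-\{x_0\}$ by such regions, using overlapping annuli and varying $a$, shows that the constant is the same everywhere.

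\textbf{Main obstacle.} The hardest step is Step 1. I must extract pointwise-in-region smallness from the vanishing of an averaged quantity. This requires checking that the weights in Proposition \ref{DvsF} are bounded below on the relevant $t$-range. It also requires justifying the normal-derivative identity and the co-area conversion on possibly critical level sets of $u_k$; I would handle these by restricting to regular values and using Sard together with the $L^1$ form of the bound.
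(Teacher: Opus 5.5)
Your proposal is correct and follows essentially the same route as the paper. $D\equiv 0$ forces the nonnegative error terms in $F_k'$ to vanish in integrated form. The co-area formula and Cauchy--Schwarz, after splitting $\grad(\vert\grad u_k\vert/u_k^2)$ into tangential and normal parts, give $L^1$ smallness of that gradient. A $k$-independent Poincaré inequality on a fixed connected region then lets you pass to the limit, and overlapping regions at different scales fix a single constant.

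The only minor difference is in how the integrated error is extracted. The paper first argues that $F_k\to 0$ in $L^1$ on $[d,8d]$, picks endpoints $a,b$ with $F_k(a),F_k(b)\to 0$, and then integrates $F_k'$. You instead write $r_2D_k(r_2)-r_1D_k(r_1)$ directly as a weighted integral of $F_k'$ whose weight is bounded below on a compact $t$-range. That is an equally valid and slightly more direct extraction.
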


\begin{proof} Fix some $d > 0$. Without loss of generality we can suppose the pole of $u$ is at the origin. Choose a Euclidean annulus $\an$ which compactly contains $\{1/(8d) < u < 1/d\}$. Choose smooth metrics $g_k$ with $R(g_k)\ge -\eps_k$ converging uniformly to $g$ on $\an$. Let $u_k$ be the $g_k$-harmonic function which agrees with $u$ on the boundary of $\an$. Then we have $D_k(r) \to D(r) = 0$ as $k\to \infty$ for each fixed $r\in [d,2d]$. Writing $D_k(r)$ as an integral of $F_k$ as in Proposition \ref{DvsF}, it follows that $F_k \to 0$ as an $L^1$ function on $[d,8d]$. Hence we can find $a \approx d$ and $b \approx 8d$ such that $F_k(a) \to 0$ and $F_k(b)\to 0$ as $k\to \infty$. 
    
By the derivative formula Proposition \ref{F-derivative}, the fact that the level sets of $u_k$ are connected, and the fact that $\eps_k\to 0$, it follows that
\begin{equation}
\label{stability-estimate}
\int_a^b \left[\int_{\{u_k = 1/t\}} \frac{\vert \grad^{g_k,T} \vert \grad^{g_k} u_k\vert\vert^2}{\vert \grad^{g_k} u_k\vert^2} + \left(H_k - \frac{2 \vert \grad^{g_k} u_k\vert}{u_k}\right)^2\, da_k\right]\, dt \to 0.
\end{equation}
Choose a bounded, connected open set $\Omega$ with smooth boundary which is a subset of $\{1/b < u_k < 1/a\}$ for all $k$ and which contains $\{1/(7d) < u < 1/(2d)\}$. Since the metrics $g_k$ are all uniformly comparable (as they converge uniformly to $g$), there are uniform Poincaré inequalities on the fixed region $\Omega$:
\[
\int_\Omega \vert f - \overline f\vert \, dv_k \le C \int_\Omega \vert \grad^{g_k} f\vert\, dv_k \le C \int_{\{1/b < u_k < 1/a\}} \vert \grad^{g_k} f\vert\, dv_k
\]
where $\overline f$ is the average of $f$ on $\Omega$ with respect to $g_k$ and $C$ does not depend on $k$. 

Let $U_k = \{1/b < u_k < 1/a\}$. This Poincaré inequality implies that there are constants $c_k$ so that  
\begin{align*} 
\int_\Omega \left\vert\frac{\vert \grad^{g_k} u_k\vert}{u_k^2} - c_k\right\vert\, dv_k &\le C \int_{U_k} \left\vert \grad^{g_k} \left(\frac{\vert \grad^{g_k} u_k\vert}{u_k^2}\right)\right\vert\, dv_k\\
&= C \int_{U_k} \left\vert \frac{\grad^{g_k} \vert \grad^{g_k} u_k\vert}{u_k^2} - 2 \frac{\vert \grad^{g_k} u_k\vert}{u_k^3} \grad^{g_k} u_k\right\vert\, dv_k\\
&\le C \int_{U_k} \left\vert {\grad^{g_k} \vert \grad^{g_k} u_k\vert} - 2 \frac{\vert \grad^{g_k} u_k\vert}{u_k} \grad^{g_k} u_k\right\vert\, dv_k
\end{align*}
where in the last line we used the fact that $u_k\in [a,b]$. Next, observe that 
\begin{align*}
&\int_{U_k} \left\vert {\grad^{g_k} \vert \grad^{g_k} u_k\vert} - 2 \frac{\vert \grad^{g_k} u_k\vert}{u_k} \grad^{g_k} u_k\right\vert\, dv_k \\
&\qquad \le  C \int_a^b \int_{\{u_k=1/t\}} \left\vert {\grad^{g_k} \vert \grad^{g_k} u_k\vert} - 2 \frac{\vert \grad^{g_k} u_k\vert}{u_k} \grad^{g_k} u_k\right\vert\, \frac{1}{\vert \grad^{g_k} u_k\vert}\,  da_k \, dt\\
&\qquad = C \int_a^b \int_{\{u_k = 1/t\}}  \left\vert \frac{\grad^{g_k} \vert \grad^{g_k} u_k\vert}{\vert \grad^{g_k} u_k\vert} - 2 \frac{\vert \grad^{g_k} u_k\vert}{u_k} \frac{\grad^{g_k} u_k}{\vert \grad^{g_k} u_k\vert} \right\vert\, da_k\, dt\\
&\qquad \le C \int_a^b \left[\int_{\{u_k=1/t\}} \left\vert \frac{\grad^{g_k} \vert \grad^{g_k} u_k\vert}{\vert \grad^{g_k} u_k\vert} - 2 \frac{\vert \grad^{g_k} u_k\vert}{u_k} \frac{\grad^{g_k} u_k}{\vert \grad^{g_k} u_k\vert} \right\vert^{2}\, da_k\right]^{1/2} \left[\int_{\{u_k=1/t\}} 1\, da_k\right]^{1/2}\, dt \\
&\qquad \le C \left[\int_a^b \int_{\{u_k=1/t\}} \left\vert \frac{\grad^{g_k} \vert \grad^{g_k} u_k\vert}{\vert \grad^{g_k} u_k\vert} - 2 \frac{\vert \grad^{g_k} u_k\vert}{u_k} \frac{\grad^{g_k} u_k}{\vert \grad^{g_k} u_k\vert} \right\vert^{2}\, da_k\, dt\right]^{1/2} \left[\int_a^b \int_{\{u_k=1/t\}} 1 \, da_k \, dt\right]^{1/2}
\end{align*} 
where again we used the fact that $u_k \in [a,b]$.  By the co-area formula we have 
\[
\int_a^b \int_{\{u_k=1/t\}} 1 \, da_k \, dt \le C \int_{U_k} \vert \grad^{g_k} u_k\vert\, dv_k 
\]
and this is uniformly bounded. Now observe that 
\[
\int_a^b \int_{\{u_k=1/t\}} \left\vert \frac{\grad^{g_k} \vert \grad^{g_k} u_k\vert}{\vert \grad^{g_k} u_k\vert} - 2 \frac{\vert \grad^{g_k} u_k\vert}{u_k} \frac{\grad^{g_k} u_k}{\vert \grad^{g_k} u_k\vert} \right\vert^{2}\, da_k\, dt \to 0. 
\]
Indeed, this follows from the estimate 
\begin{align*}
&\frac 1 2 \left\vert \frac{\grad^{g_k} \vert \grad^{g_k} u_k\vert}{\vert \grad^{g_k} u_k\vert} - 2 \frac{\vert \grad^{g_k} u_k\vert}{u_k} \frac{\grad^{g_k} u_k}{\vert \grad^{g_k} u_k\vert} \right\vert^{2} \\
&\qquad \le \left\vert \frac{\grad^{g_k,T} \vert \grad^{g_k} u_k\vert}{\vert \grad^{g_k} u_k\vert}\right\vert^2 + \left\vert \frac{\grad^{g_k,N} \vert \grad^{g_k} u_k\vert}{\vert \grad^{g_k} u_k\vert} - 2 \frac{\vert \grad^{g_k} u_k\vert}{u_k} \frac{\grad^{g_k} u_k}{\vert \grad^{g_k} u_k\vert}\right\vert^2 \\
&\qquad \le \left\vert \frac{\grad^{g_k,T} \vert \grad^{g_k} u_k\vert}{\vert \grad^{g_k} u_k\vert}\right\vert^2 +\left\vert \left\la \frac{\grad^{g_k} \vert \grad^{g_k} u_k\vert}{\vert \grad^{g_k} u_k\vert}, \frac{\grad^{g_k} u_k}{\vert \grad^{g_k} u_k\vert}\right\ra \frac{\grad^{g_k} u_k}{\vert \grad^{g_k} u_k\vert} - 2 \frac{\vert \grad^{g_k} u_k\vert}{u_k} \frac{\grad^{g_k} u_k}{\vert \grad^{g_k} u_k\vert}\right\vert^2\\
&\qquad=  \left\vert \frac{\grad^{g_k,T} \vert \grad^{g_k} u_k\vert}{\vert \grad^{g_k} u_k\vert}\right\vert^2 + \left\vert H_k -  \frac{2\vert \grad^{g_k} u_k\vert}{u_k}\right\vert^2 
\end{align*} 
together with \eqref{stability-estimate}. Hence sending $k$ to infinity in the Poincaré inequality estimate, we deduce that there is a constant $c$ such that $\vert \grad^g u\vert = cu^2$ almost-everywhere on $\Omega$. Finally, the result follows since $d$ was arbitrary.  
\end{proof}

\begin{prop}
We have $\vert \grad^g u\vert = u^2$ almost-everywhere. 
\end{prop}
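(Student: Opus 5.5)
The plan is to feed the identity $\vert \grad^g u\vert = c u^2$ from the previous proposition into the definition of $D(r)$. This turns $D(r)$ into an explicit affine function of $c$. The hypothesis $D(r)=0$ (which holds for all $r$ since $m(g)=0$) then forces $c=1$.

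One preliminary point: the previous proposition produces the constant on each region $\{1/(7d)<u<1/(2d)\}$, so a priori $c=c_d$ may depend on $d$. Since these regions overlap for nearby $d$ and $u$ is continuous with $\vert\grad^g u\vert>0$ a.e. there, the constants agree on overlaps. Alternatively, it suffices to run the argument below separately on each window $\{1/(4r)<u<1/r\}$ and obtain $c_d=1$ for each $d$.

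First I would rewrite $D(r)$ using the co-area formula for the Sobolev function $u$, which is valid since $u\in W^{1,p}_{\mathrm{loc}}$ away from the pole. This gives
\[
\int \theta(r,u)\vert\grad^g u\vert^3\,dv_g=\int_0^\infty \theta(r,s)\left[\int_{\{u=s\}}\vert\grad^g u\vert^2\,d\mathcal H^2_g\right]ds .
\]
On almost every level set we have $\vert \grad^g u\vert = cs^2$, and the flux normalization $\int_{\{u=s\}}\vert\grad^g u\vert\,d\mathcal H^2_g=4\pi$ holds for a.e. $s$. Hence the inner integral equals $4\pi c s^2$ and
\[
D(r)=c_\psi+4\pi c\,I_\psi(r),\qquad I_\psi(r):=\int_0^\infty \theta(r,s)s^2\,ds .
\]

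Next I would evaluate $I_\psi(r)$ without computing the integral, by comparing with the Euclidean model. For $g=g_{\text{euc}}$ and $u=\vert x\vert^{-1}$ we have $\vert\grad u\vert=u^2$ exactly and the flux is $4\pi$, so the same computation gives $\mathcal D(g_{\text{euc}},\vert x\vert^{-1})=c_\psi+4\pi I_\psi(1)$. This quantity was shown to be $0$ in Section \ref{section-pos}. A scaling check, or the direct substitution $s\mapsto s/r$ (under which $I_\psi$ is independent of $r$), gives $4\pi I_\psi(r)=-c_\psi$ for all $r$. If one wants to avoid quoting the earlier computation, $I_\psi$ can also be evaluated directly: after substituting $v=1/(rs)-1$ and $v=1/(2rs)-1$, it reduces to $-\tfrac{1}{2}\int_0^1\psi(v)/(1+v)\,dv$, which is consistent with $c_\psi=2\pi\int_0^1\psi/(1+s)$.

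Therefore $D(r)=c_\psi(1-c)$. Since $\psi\ge 0$ and $\psi\not\equiv0$, we have $c_\psi>0$, so $D(r)=0$ yields $c=1$.

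The only step I expect to need care is the justification of the co-area computation at the $W^{1,p}$ level. Specifically, I need that $\vert\grad^g u\vert=cu^2$ a.e. in space implies the same identity $\mathcal H^2$-a.e. on a.e. level set, and that the flux equals $4\pi$ for a.e. $s$. The first follows from the co-area formula applied to the indicator of the exceptional null set. The second was already recorded in the preliminaries on Green's functions.
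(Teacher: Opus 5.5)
Your proposal is correct and follows the paper's proof. You substitute $\vert\grad^g u\vert = cu^2$, apply the co-area formula together with the flux normalization $4\pi$, and change variables to get $0=D(r)=2\pi(1-c)\int_0^1\frac{\psi(s)}{1+s}\,ds$, which forces $c=1$. Your added remark that the constants produced on overlapping regions must agree (the overlaps are nonempty open sets on which $u>0$) supplies a small step the paper leaves implicit.
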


\begin{proof}
    From the previous proposition, we know that $\vert \grad^g u\vert = cu^2$ almost-everywhere for some constant $c$. We also know that $D(r) = 0$ for every  $r > 0$. Observe that 
    \begin{align*}
        D(r) &= 2\pi \int_0^1 \frac{\psi(s)}{1+s}\, ds + \int \left[\frac 1 2 \psi\left(\frac{1}{2 r u}-1\right)-\psi\left(\frac{1}{ru}-1\right)\right] \frac{\vert \grad^g u\vert^3}{u^3}\, dv_g\\
        &= 2\pi \int_0^1 \frac{\psi(s)}{1+s}\, ds + \int_{\{1/(4r)<u<1/r\}} \left[\frac 1 2 \psi\left(\frac{1}{2 r u}-1\right)-\psi\left(\frac{1}{ru}-1\right)\right] \frac{c}{u}  \vert \grad^g u\vert^2 \, dv_g. 
    \end{align*}
    By the Sobolev co-area formula, this implies 
    \begin{align*}
        D(r) &= 2\pi \int_0^1 \frac{\psi(s)}{1+s}\, ds + \int_{1/(4r)}^{1/r} \left[\frac 1 2 \psi\left(\frac{1}{2r t}-1\right) - \psi\left(\frac 1 {rt}-1\right)\right] \frac{c}{t}\left[\int_{\{u=t\}} \vert \grad^g u\vert\, d\mathcal H^2_g\right]\, dt \\
        &= 2\pi \int_0^1 \frac{\psi(s)}{1+s}\, ds + \int_{1/(4r)}^{1/r} \left[\frac 1 2 \psi\left(\frac{1}{2r t}-1\right) - \psi\left(\frac 1 {rt}-1\right)\right] \frac{4\pi c}{t}\, dt.
    \end{align*}
    By suitable changes of variables, this implies that   
    \begin{align*} 
 0 = D(r) = 2\pi (1-c)\int_0^1  \frac{\psi(s)}{1+s}\, ds.
    \end{align*}
    It follows that $c= 1$. 
\end{proof}

Next, define $\rho = u^{-1}$. Then it is easy to compute that 
\[
\vert \grad^g \rho\vert = \frac{\vert \grad^g u\vert}{u^2} = 1
\]
almost everywhere. In fact, $\rho^2$ solves a very simple equation on all of $\R^3$. 

\begin{prop} 
The function $\rho^2$ satisfies $\lap_g(\rho^2) = 6$ in the sense of distributions on all of $\R^3$ (even across the pole). 
\end{prop}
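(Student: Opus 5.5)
We need to show that for every $\phi\in C^\infty_c(\R^3)$,
\[
-\int \la \grad^g(\rho^2),\grad^g\phi\ra_g\, dv_g = 6\int \phi\, dv_g .
\]
The strategy is to use the weak harmonicity of $u$ together with the identity $\vert\grad^g u\vert = u^2$ from the previous proposition. The pole is handled by a cutoff. Assume the pole is at $0$.

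\textbf{Step 1: away from the pole.} Write $\rho^2 = u^{-2}$, so that
\[
\grad(\rho^2) = -2u^{-3}\grad u .
\]
Since $u$ is continuous and positive on $\R^3-\{0\}$ and lies in $W^{1,p}_{\text{loc}}$, the function $\rho^2$ is in $W^{1,p}_{\text{loc}}(\R^3-\{0\})$. Take $\phi$ supported away from $0$. Then
\[
-\int \la\grad^g\rho^2,\grad^g\phi\ra\, dv_g = 2\int u^{-3}\la \grad^g u,\grad^g\phi\ra\, dv_g .
\]
Now test the equation $\Delta_g u=0$ against $u^{-3}\phi$, which is an admissible $W^{1,p}$ test function with compact support away from the pole. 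This gives
\[
\int \la\grad^g u, u^{-3}\grad^g\phi\ra\, dv_g = 3\int u^{-4}\vert\grad^g u\vert^2\phi\, dv_g .
\]
Using $\vert\grad^g u\vert^2 = u^4$ almost everywhere, the right-hand side equals $3\int\phi\, dv_g$. Hence $\Delta_g\rho^2 = 6$ weakly on $\R^3-\{0\}$.

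\textbf{Step 2: across the pole.} Take a general $\phi$ and split it as $\phi = \phi\zeta_\eps + \phi(1-\zeta_\eps)$. Here $\zeta_\eps$ is a cutoff equal to $1$ on $B(0,\eps)$, supported in $B(0,2\eps)$, with $\vert\grad\zeta_\eps\vert\le C/\eps$.

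For the part $\phi(1-\zeta_\eps)$, Step 1 applies directly. What remains is to show that the terms involving $\zeta_\eps$ vanish as $\eps\to 0$. The key bound is that $\vert\grad^g\rho\vert = 1$ almost everywhere, so
\[
\vert\grad^g\rho^2\vert = 2\rho \le C\vert x\vert,
\]
using the two-sided bound $u\ge C_1/\vert x\vert$. In particular $\rho^2$ is locally Lipschitz on all of $\R^3$. Its gradient lies in $L^\infty_{\text{loc}}$ and is in fact $O(\vert x\vert)$ near $0$, so $\rho^2\in W^{1,\infty}_{\text{loc}}(\R^3)$.

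With this bound, the error terms are controlled as follows:
\[
\int \vert\grad^g\rho^2\vert\,\vert\grad(\phi\zeta_\eps)\vert \le C\int_{B(0,2\eps)} \vert x\vert\,(1+\eps^{-1})\, dx = O(\eps^3),
\]
and
\[
6\int \phi\zeta_\eps\, dv_g = O(\eps^3).
\]
Letting $\eps\to 0$ gives the identity on all of $\R^3$.

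\textbf{Main obstacle.} The delicate point is justifying that $u^{-3}\phi$ is a legitimate test function for the weak equation $\Delta_g u = 0$, given only Sobolev regularity of $u$. This is fine because $u$ is bounded below by a positive constant on compact sets away from the pole, so the chain rule for Sobolev functions applies. A second subtlety is that $\vert\grad^g u\vert = u^2$ holds only almost everywhere, but that suffices inside the integrals. After these checks, the removability at the pole is straightforward from the Lipschitz bound on $\rho^2$.
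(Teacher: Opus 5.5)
Your proposal is correct and follows essentially the same route as the paper: away from the pole you test the weak equation for $u$ against $u^{-3}\phi$ and use $\vert\nabla^g u\vert = u^2$. Across the pole you use a cutoff at scale $\eps$ together with $\vert\nabla^g(\rho^2)\vert = 2\rho \le C\vert x\vert$ to make the error terms $O(\eps^3)$; the only minor difference is that you get $\rho \le C\vert x\vert$ directly from the Green's function lower bound $u \ge C_1/\vert x\vert$, whereas the paper deduces it from $\vert\nabla^g\rho\vert = 1$ and $\rho(0)=0$, and both are valid.
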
 

\begin{proof}
Without loss of generality, suppose the pole $x_0$ is at the origin.
Let $A(x) = g^{ij}\sqrt{\det g}$. Since $u$ is $g$-harmonic away from the pole, we know that 
\[
\int\la A(x)\grad u,\grad \varphi\ra \, dx = 0
\]
for all smooth functions $\varphi$ compactly supported in $\R^3 -\{0\}$. In fact, we have $u^{-3}\varphi\in W^{1,2}$ and so we have 
\begin{align*}
\int \la A(x) \grad u, \grad(-2 u^{-3} \varphi)\ra\, dx = 0
\end{align*}
for all such $\varphi$ by approximation. On the other hand, note that $\grad (\rho^2) = \grad(u^{-2}) = -2 u^{-3}\grad u$ and so 
\begin{align*}
    \int \la A(x)\grad u, \grad(-2 u^{-3}\varphi)\ra\, dx&= \int \la A(x)\grad u, 6u^{-4}\varphi\grad u\ra \,dx + \int\la A(x) \grad u,-2 u^{-3}\grad \varphi\ra\, dx\\
&= \int 6u^{-4}  \vert \grad^g u\vert^2 \varphi\, dv_g + \int \la A(x) \grad (\rho^2), \grad \varphi\ra\, dx\\&= \int 6 \varphi\, dv_g + \int g(\grad^g(\rho^2),\grad^g \varphi)\, dv_g.  
\end{align*}
Hence we have 
\[
\int g(\grad^g(\rho^2),\grad^g\varphi)\, dv_g = -\int 6\varphi\, dv_g
\]
for all smooth functions $\varphi$ with compact support in $\R^3 - \{0\}$ and so $\rho^2$ solves $\lap_g(\rho^2) = 6$ in the sense of distributions away from the pole. 

It remains to extend this across the pole. Assume now that $\varphi$ is a smooth compactly supported function on $\R^3$. Let $\eta$ be a a smooth function which is 1 outside $B_{2r}$ and 0 inside $B_r$ and satisfies $\vert \grad \eta\vert \le Cr^{-1}$. Then using $\eta \varphi$ as a test function we see that 
\[
\int g(\grad^g(\rho^2),\eta \grad^g \varphi)\, dv_g + \int g(\grad^g(\rho^2),\varphi\grad^g \eta)\, dv_g = -\int 6\eta\varphi\, dv_g. 
\]
Now note that $\vert \grad^g (\rho^2)\vert = 2\rho \vert \grad^g \rho\vert = 2\rho$ almost-everywhere. Also since $u$ goes to infinity at the pole, and $\rho$ satisfies $\vert \grad^g \rho\vert = 1$ almost-everywhere, it follows that there is a local Lipschitz bound $\vert \rho(x)\vert = \vert \rho(x)-\rho(0)\vert \le C \vert x\vert$. Hence we have $\vert \grad^g (\rho^2)\vert \le C \vert x\vert$. It now follows easily that 
\begin{gather*}
    \int g(\grad^g(\rho^2),\eta \grad^g \varphi)\, dv_g \to \int g(\grad^g(\rho^2),\grad^g\eta)\, dv_g,\\
    \int 6\eta\varphi\, dv_g \to \int 6\varphi\, dv_g,
\end{gather*}
as $r\to 0$. 
Finally, note that 
\[
\int \vert g(\grad^g(\rho^2),\varphi \grad^g \eta)\vert\, dv_g \le \frac{C}{r} \int_{B_{2r}-B_r} \vert x\vert \, dx \le Cr^3\to 0
\]
as $r\to 0$. Hence we obtain 
\[
\int g(\grad^g(\rho^2),\grad^g \varphi)\, dv_g = -\int 6\varphi \, dv_g
\]
for all smooth functions $\varphi$ with compact support on $\R^3$. This proves the proposition. 
\end{proof}

We now vary the choice of pole. Write $u_y$ for the Green's function with pole at $y$ and let $\rho_y = 1/u_y$.  Then for any choice of $y$, the function $\rho_y^2 - \rho_0^2$ is $g$-harmonic on all of $\R^3$.  
Moreover, the difference $\rho_y^2 - \rho_0^2$ has sub-quadratic growth at infinity. 

\begin{prop}
    The function $\rho_y^2 - \rho_0^2$ satisfies 
    \[
    \vert (\rho_y^2 - \rho_0^2)(x)\vert = O(\vert x\vert^{2-\beta}) 
    \]
    for some $\beta > \frac 1 2$. 
\end{prop}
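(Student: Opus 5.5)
We use the quantitative asymptotics of Green's functions from Proposition \ref{prop-decay}, applied to each pole. For any fixed pole $y$, the proof of Proposition \ref{prop-decay} does not use the location of the pole; only the behaviour of $u_y$ near infinity matters. So the same argument gives
\[
\Big\| r u_y(rx) - \frac{1}{\vert x\vert}\Big\|_{W^{1,p}(\an)} = O(r^{-\beta}), \qquad p>3,
\]
for some $\beta>\frac12$. Since $p>3$, Morrey's inequality on the fixed annulus $\an$ upgrades this to a $C^0$ bound. Rescaling back, on $\an(r)$ we get
\[
\Big\vert u_y(x) - \frac{1}{\vert x\vert}\Big\vert \le C\vert x\vert^{-1-\beta}.
\]
This holds for both $y$ and $0$, with a common $\beta$: take the minimum, or just note that any $\frac12<\beta<\min\{\tau,1\}$ works for both.

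Next we convert this to a statement about $\rho_y = 1/u_y$. Write $u_y = \vert x\vert^{-1}(1+e_y)$ with $\vert e_y\vert \le C\vert x\vert^{-\beta}$. Then
\[
\rho_y = \vert x\vert (1+e_y)^{-1} = \vert x\vert + O(\vert x\vert^{1-\beta})
\]
for $\vert x\vert$ large. Squaring gives $\rho_y^2 = \vert x\vert^2 + O(\vert x\vert^{2-\beta})$, where the cross term dominates and the square of the error is $O(\vert x\vert^{2-2\beta})$. The same expansion holds for $\rho_0^2$. Subtracting, the $\vert x\vert^2$ terms cancel, leaving
\[
\vert \rho_y^2-\rho_0^2\vert = O(\vert x\vert^{2-\beta})
\]
for $\vert x\vert$ large. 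On compact sets both functions are continuous, so the bound holds globally.

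An alternative route avoids the $C^0$ upgrade: since $\vert \grad^g\rho_y\vert = 1$, the function $\rho_y$ is Lipschitz, and an $L^p$-average estimate on $\an(r)$ combined with the Lipschitz bound yields the pointwise estimate. The direct Morrey route is simpler, though, and we would use it.

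The main point to check is that Proposition \ref{prop-decay} really yields the stated $W^{1,p}$ rate uniformly on each annulus $\an(r)$ for all large $r$, not merely along a sequence. The proof there gives the weighted estimate \eqref{bound} for $w$ and the expansion $h=O(\vert x\vert^{-2})$, which controls every dyadic annulus. The same ingredients ($w$ controlled by \eqref{bound}, $h$ by its harmonic expansion) give $W^{1,p}$ control on each annulus, and Morrey then gives the pointwise bound. We should also observe that the normalization (flux $4\pi$) is the same for both Green's functions. This is what makes the leading terms $\vert x\vert^{-1}$ coincide, so that they cancel in the difference.
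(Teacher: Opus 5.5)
Your proposal is correct and is essentially the paper's argument. Both apply Proposition \ref{prop-decay} at each pole to get $u_y = \vert x\vert^{-1} + O(\vert x\vert^{-1-\beta})$; you make the Morrey embedding to a pointwise bound explicit, which the paper leaves implicit. The only difference is the final algebra: the paper factors $\rho_y^2-\rho_0^2=(\rho_y-\rho_0)(\rho_y+\rho_0)$ and bounds the sum using $\vert \grad^g\rho\vert = 1$, while you expand each $\rho^2$ about $\vert x\vert^2$.

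One small correction: the estimate is a statement at infinity and cannot hold as $C\vert x\vert^{2-\beta}$ for all $x$, since $(\rho_y^2-\rho_0^2)(0)=\rho_y(0)^2>0$ for $y\neq 0$. Your ``holds globally'' should therefore read $C(1+\vert x\vert)^{2-\beta}$.
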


\begin{proof}
    We have $\rho_y^2 - \rho_0^2 = (\rho_y - \rho_0)(\rho_y+\rho_0)$. The gradient estimates $\vert \grad^g \rho_y\vert \ =1$ almost-everywhere and $\vert \grad^g \rho_0\vert = 1$ almost-everywhere imply that $\vert \rho_y + \rho_0\vert$ has at most linear growth. We also have 
    \begin{align*}
        \vert \rho_y-\rho_0\vert = \frac{\vert u_0 - u_y\vert}{u_yu_0} \le C \vert x\vert^2 \vert u_0 - u_y\vert.
    \end{align*}
    Now Proposition \ref{prop-decay} implies that 
    \[
    \left\|r u_0(rx) - \frac 1 {\vert x\vert} \right \|_{W^{1,p}(\an)} = O(r^{-\beta})
    \]
    for some $\frac 1 2 < \beta < \tau$, and it follows that 
    \[
    \left\vert u_0(x) - \frac 1 {\vert x\vert}\right\vert = O(\vert x\vert^{-1-\beta}).
    \]
    Likewise we have 
    \[
    \left\vert u_y(x) - \frac 1 {\vert x\vert}\right\vert = O(\vert x\vert^{-1-\beta}).
    \]
    Combining these estimates, we obtain 
    $
    \vert u_0(x) - u_y(x)\vert = O(\vert x\vert^{-1-\beta})
    $
    and it now follows that 
    \[
    \vert (\rho_y^2 - \rho_0^2)(x)\vert \le C \vert x\vert^{2-\beta},
    \]
    as claimed. 
\end{proof}

Next, we classify the $g$-harmonic functions with this growth rate near infinity. At this point, the reader may wish to review Appendix \ref{Appendix-Mapping}. We proceed in several steps. 

\begin{prop}
\label{harmonic-coordiante}
For each coordinate function $x_i$ there is a $g$-harmonic function $w_i$ on $\R^3$ which satisfies
\[
\frac{w_i(x) - x_i}{\vert x\vert} \to 0
\]
as $\vert x\vert\to \infty$. 
\end{prop}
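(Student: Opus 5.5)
We look for $w_i$ in the form $w_i = x_i + v_i$. The correction $v_i$ should solve a divergence-form equation whose right-hand side decays, and we will get it from the weighted theory for $\lap w = \div(X)$ in Appendix \ref{Appendix-Mapping} (Proposition \ref{weight-estimate-I}), as in the proof of Proposition \ref{prop-decay}.

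\textbf{Reduction to an equation for $v_i$.} Write $A(x) = g^{ij}\sqrt{\det g}$ and $B = I - A$, so that $\vert B(x)\vert = O(\vert x\vert^{-\tau})$ and $B$ is bounded. The condition $\lap_g w_i = 0$ means $\div(A\grad w_i) = 0$ weakly. Substituting $w_i = x_i + v_i$ and using $\grad x_i = e_i$, this becomes
\[
\div(A\grad v_i) = -\div(A e_i) = \div(B e_i),
\]
since $\div(e_i)=0$. Fix $\frac12<\beta<\min\{\tau,1\}$. The vector field $Be_i$ is bounded and $O(\vert x\vert^{-\tau})$, so
\[
\int \vert Be_i\vert^p(1+\vert x\vert)^{p\beta - 3}\,dx<\infty .
\]
Hence $Be_i$ lies in the weighted space $L^p_{\delta-1}$ with $\delta = 1-\beta \in (0,\tfrac12)$. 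We therefore expect $v_i = O(\vert x\vert^{1-\beta})$, which is $o(\vert x\vert)$, as the statement requires.

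\textbf{Solving the variable-coefficient equation.} The operator is $\div(A\grad\cdot)$, not the flat Laplacian, so Proposition \ref{weight-estimate-I} does not apply directly. I would rewrite the equation as
\[
\lap v_i = \div\big(Be_i + B\grad v_i\big)
\]
and treat it as a perturbation. Split $B = B_1 + B_2$, where $B_2$ is compactly supported and $\|B_1\|_{L^\infty}$ is small; this is possible because $B\to 0$ at infinity. The map $v\mapsto \lap^{-1}\div(B_1\grad v)$ is then a contraction on the weighted space $\{v : \grad v\in L^p_{\delta-1}\}$, provided the weighted estimate holds for this $\delta$. This is the same estimate used in Proposition \ref{prop-decay}, but now in the range $0<\delta<1$ rather than $-2<\delta<-1$; it is the gradient part of the mapping property, and I expect the appendix to cover non-exceptional weights.

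The compact term $B_2$ gives a Fredholm perturbation. To avoid a cokernel argument I would instead solve by exhaustion: on balls $B_R$, minimize the energy
\[
\int \la A\grad v,\grad v\ra + 2\la Be_i,\grad v\ra\,dx
\]
with $v=0$ on $\bd B_R$, and let $R\to\infty$. This needs uniform weighted bounds on the minimizers $v_R$, and those bounds are exactly what the perturbation argument provides away from a compact set. Injectivity follows from the maximum principle: a $g$-harmonic function that is $o(\vert x\vert^{\delta})$ with $\delta<1$, hence sublinear, is constant, by a Liouville argument for uniformly elliptic divergence-form equations using De Giorgi--Nash--Moser Hölder estimates.

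\textbf{Pointwise decay and main obstacle.} Once $\grad v_i\in L^p_{\delta-1}$ with $p>3$, rescaling to the annuli $\an(r)$ and applying Morrey's inequality there, exactly as in Proposition \ref{prop-decay}, gives $\vert v_i(x)\vert \le C\vert x\vert^{1-\beta}$ up to an additive constant. Subtracting that constant yields $(w_i - x_i)/\vert x\vert\to 0$. The step I expect to be the main obstacle is the existence of the correction $v_i$ in the positive-weight range $0<\delta<1$ for the variable-coefficient operator. This combines the weighted mapping theorem with the smallness and compactness splitting of $B$, and I would first check precisely which weights Appendix \ref{Appendix-Mapping} covers.
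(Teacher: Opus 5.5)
Your reduction to $\div(A\grad v_i)=\div(Be_i)$ matches the paper, and so does the weight $\delta=1-\beta\in(0,1)$; Proposition \ref{weight-estimate-II} does cover this range. The contraction argument for the small part $B_1$ is also the paper's argument, and your final Morrey-on-annuli step is equivalent to the weighted Sobolev embedding the paper uses.

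The gap is in how you treat the compactly supported part $B_2$, which is not small. Your exhaustion scheme needs weighted bounds on the Dirichlet minimizers $v_R$ that are uniform in $R$, and the contraction argument does not provide them. It produces one global solution of a different equation, $\div((I-B_1)\grad\tilde v)=\div(Be_i)$. It says nothing about Dirichlet problems for the full coefficient $A$ on $B_R$, and Appendix \ref{Appendix-Mapping} has no estimates on bounded domains.

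Absorbing a non-small $B_2$ into an a priori estimate would need a compactness-and-contradiction argument, including ruling out loss of the normalized gradient toward $\bd B_R$, plus a uniqueness theorem. The uniqueness you cite is not valid as stated. De Giorgi--Nash--Moser forces constancy only for growth $o(\vert x\vert^{\alpha_0})$, where $\alpha_0$ is the possibly tiny H\"older exponent of the ellipticity class. It does not cover all sublinear growth; the sublinear Liouville theorem in Section \ref{section-rigidity} genuinely uses asymptotic flatness. Separately, your energy has the wrong sign on the linear term: its Euler--Lagrange equation is $\div(A\grad v)=-\div(Be_i)$.

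The missing idea is that the compact part needs no smallness, no Fredholm theory, and no weighted estimates. Take $\tilde v$ from the contraction with $\widetilde A=I-B_1$, which agrees with $A$ outside a compact set. Then look for $v=\tilde v+u$ with
\[
\div(A\grad u)=\div\big((\widetilde A-A)\grad\tilde v\big).
\]
The vector field $(\widetilde A-A)\grad\tilde v$ is compactly supported and lies in $L^2$, since $\grad\tilde v\in L^p_{\mathrm{loc}}$ with $p>3$. The form $\int\la A\grad u,\grad\varphi\ra\,dx$ is coercive on $\dot H^1(\R^3)$ for any uniformly elliptic $A$, so Lax--Milgram gives $u\in\dot H^1\subset L^6$. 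Because $u$ is $g$-harmonic outside a compact set, De Giorgi--Nash--Moser shows $u$ is bounded and tends to $0$ at infinity, so $v=\tilde v+u$ is still $o(\vert x\vert)$.

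Your exhaustion idea does work if you apply it only to this residual problem. There the unweighted energy bound $\|\grad u_R\|_{L^2}\le C\|(\widetilde A-A)\grad\tilde v\|_{L^2}$ is uniform in $R$.
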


\begin{proof}
Let $A(x) = g^{ij}\sqrt{\det g}$ and write $A(x) = I - B(x)$ so that $\vert B(x)\vert = O(\vert x\vert^{-\tau})$.  Fix some $i = 1,2,3$. Define a vector field $F(x) = B(x) e_i$. The proposition is equivalent to the existence of a  solution $v = v_i$ to the equation
\[
\int_{\mathbb R^3}A\nabla v\cdot \nabla \varphi \, dx=\int_{\mathbb R^3}F\cdot \nabla \varphi\, dx,\quad \forall \varphi\in C^{\infty}_c(\mathbb R^3),
\]
which satisfies the decay condition  
\[
\frac{v(x)}{|x|}\to 0,\quad \text{ as } |x|\to\infty.
\]
Indeed, given such  $v = v_i$, the function $w_i = x_i + v_i$ is as required. 

Thus we aim to solve the above equation. 
The first step is to prove an a priori estimate. Fix some $p > 3$.  Let $\sigma(x) = 1+\vert x\vert$. Fix some $1 -\tau < \delta<1$.  Assume that $X$ is a vector field on $\R^3$ satisfying 
\[
\|X\|_{L^p_{\delta-1}}^p = \int_{\R^3} \vert X\vert^p \sigma^{-p(\delta-1)-3}\, dx < \infty. 
\]
According to Proposition \ref{weight-estimate-II}, there is a unique weak solution $u$ to $\lap u = \div(X)$ satisfying the normalization 
\[
\int_{B_2} u\, dx = 0. 
\]
Moreover, there is an estimate 
\[
\|u\|_{W^{1,p}_\delta} \le C \|X\|_{L^p_{\delta-1}}. 
\]
We define the bounded linear operator $T\colon L^p_\delta(\R^3)^3 \to W^{1,p}_\delta(\R^3)$ by $T(X) = u$.

We now find an approximate solution to the PDE. Choose some continuous $\widetilde A = I - \widetilde B$ so that 
\[
\|\widetilde B\|_{L^\infty} < \eps
\]
and so that $\widetilde A$ agrees with $A$ outside a compact set.  Since $\vert F(x)\vert = O(\vert x\vert^{-\tau})$, we have $F \in L^p_{\delta-1}(\R^3)^3$. Consider an operator $S \colon W^{1,p}_\delta(\R^3)\to W^{1,p}_\delta(\R^3)$ given by 
\[
S(u) = T(F+\widetilde B\grad u). 
\]
Observe that if $\tilde v$ is a fixed point of $S$ then $\lap \tilde v = \div(F + \widetilde B \grad \tilde v)$ in the weak sense. Thus $\tilde v$ solves 
\[
\div(\widetilde A \grad \tilde v) = \div(F)
\]
in the weak sense with appropriate decay. 
To find a fixed point, we aim to show that $S$ is a contraction mapping. Consider any two functions $u_1,u_2 \in W^{1,p}_\delta$. Then we have 
\begin{align*}
    S(u_1)-S(u_2) = T(\widetilde B\grad(u_1-u_2)).
\end{align*}
Now $\|\widetilde B\grad (u_1-u_2)\|_{L^p_{\delta-1}} \le \eps\|\grad(u_1-u_2)\|_
{L^p_{\delta-1}} \le \eps \|u_1-u_2\|_{W^{1,p}_\delta}$. This implies that 
\[
\| S(u_1) - S(u_2)\|_{W^{1,p}_\delta} \le C \eps \|u_1-u_2\|_{W^{1,p}_\delta}
\]
and hence $S$ is a contraction provided $\eps$ is sufficiently small. This proves the existence of a weak solution $\tilde v\in W^{1,p}_\delta$ to 
\[
\lap \tilde v = \div(F + \tilde B \grad v).
\]
Further note that since $0 < \delta < 1$ and $p>3$, the Sobolev embedding theorem \cite[Theorem 1.2 (1.10)]{bartnik1986mass} implies that any function $\tilde v \in W^{1,p}_\delta$ has sublinear growth $\tilde v = o(\vert x\vert^\delta)$. 

Finally, we need to correct $\tilde v$ to $v$. We seek $v$ in the form $v = \tilde v + u$ where $u$ solves 
\[
\div(A\grad u) = \div((\widetilde A - A)\grad \tilde v). 
\]
The vector field $Y = (\widetilde A - A)\grad \tilde v\in L^2$ is compactly supported. The bilinear form associated to this equation is coercive on the homogeneous Sobolev space $\dot H^1(\R^3)$ and so there is a weak solution $u \in \dot H^1$. By the Sobolev embedding theorem, we also have $u\in L^6$.  Then since $u$ solves $\div(A\grad u) = 0$ outside a compact set, it follows from the De-Giorgi-Nash-Moser theory \cite[Theorem 8.17]{gilbarg1998elliptic}  that $u$ is continuous and goes to 0 at infinity. Finally, the function $v = \tilde v + u$ is as required. 
\end{proof}

We want to show that the space of $g$-harmonic functions on $\R^3$ of sub-quadratic growth is spanned by $w_1$, $w_2$, and $w_3$ together with the constant functions. As a preliminary step, we prove the following Liouville type theorem for $g$-harmonic functions of sublinear growth. 

\begin{prop}\label{prop: liouville}
    Assume that $v$ is a $g$-harmonic function on $\R^3$ with sublinear growth $v = o(\vert x\vert)$. Then $v$ is constant. 
\end{prop}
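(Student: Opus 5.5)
The strategy is to blow down $v$, identify the limit as a Euclidean harmonic function of sublinear growth (hence constant), and then upgrade this to constancy of $v$ itself using the De Giorgi--Nash--Moser oscillation decay. The difficulty is that $g$ is only continuous, so there is no direct gradient estimate for $v$. In its place I would use the uniform Hölder/oscillation estimates, which hold for divergence-form equations with bounded measurable, uniformly elliptic coefficients.

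Write $A = g^{ij}\sqrt{\det g}$, so that $\div(A\grad v) = 0$ weakly on $\R^3$, with $A$ uniformly elliptic and bounded on all of $\R^3$ (continuity plus asymptotic flatness). Set $M(R) = \sup_{B_R}|v - v(0)|$; by hypothesis $M(R) = o(R)$.

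The key tool is the De Giorgi--Nash--Moser estimate \cite[Theorem 8.22]{gilbarg1998elliptic}. It gives constants $\alpha\in(0,1)$ and $C$, depending only on the ellipticity bounds, such that for every $R$ and every $r \le R$,
\[
\operatorname{osc}_{B_r} v \le C\left(\frac{r}{R}\right)^\alpha \operatorname{osc}_{B_R} v \le 2C\left(\frac{r}{R}\right)^\alpha M(R).
\]
This alone does not suffice: $M(R) = o(R)$ yields only $\operatorname{osc}_{B_r} v \le C r^\alpha\, o(R^{1-\alpha})$, which need not tend to $0$ as $R\to\infty$ because $\alpha<1$. The main obstacle is therefore to improve the exponent from $\alpha$ to something close to $1$ at large scales, and here asymptotic flatness enters.

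To do this I would use the flat comparison afforded by $A\to I$. Consider $v_R(x) = (v(Rx) - v(0))/M(R)$, which is bounded by $1$ on $B_1$ and solves $\div(A(Rx)\grad v_R) = 0$ with $A(R\cdot)\to I$ uniformly on compact subsets of $\R^3 - \{0\}$. Near the origin, $A(R\cdot)$ is merely bounded and elliptic, but the Hölder estimate controls that region uniformly. Arguing as in Proposition \ref{prop-decay} (Meyers' estimate, compactness, and identification of the limit), subsequences of $v_R$ converge locally uniformly on $B_1$ to a Euclidean harmonic function $h$ with $|h|\le 1$. To see that $h$ is harmonic across the origin, I would use the uniform Hölder continuity of $v_R$: $h$ is continuous at $0$, so the singularity is removable.

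Next comes a Campanato-type iteration. Euclidean harmonic functions satisfy, for a fixed small $\theta$,
\[
\sup_{B_\theta}|h - h(0) - \grad h(0)\cdot x| \le C\theta^2.
\]
By the compactness above, for $R$ large,
\[
\inf_{\ell \text{ affine}} \sup_{B_{\theta R}}|v-\ell| \le \theta^{1+\gamma}\inf_{\ell}\sup_{B_R}|v - \ell| + o(1)\,M(R),
\]
for some $\gamma>0$. To get true affine approximations of $v$ rather than $x$, it is cleaner to replace affine functions by $c + b\cdot x$ with $x$ the coordinates; this is legitimate because the error between $x_i$ and the $g$-harmonic $w_i$ of Proposition \ref{harmonic-coordiante} is $o(|x|)$. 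A more robust alternative would be a linear-algebraic argument: show that the space $\mathcal H_1$ of $g$-harmonic functions of growth $O(|x|)$ has dimension at most $4$ via the blow-down map to Euclidean affine functions. Its kernel consists of functions of sublinear growth, which is exactly what we are trying to prove, so that route is circular.

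I would therefore proceed directly. Assume $v$ is nonconstant and normalize the blow-downs $v_R$ as above. Apply a ``frequency'' or doubling argument: the ratio $M(2R)/M(R)$ must tend to $2^{d}$, where $d$ is the degree of the limiting homogeneous harmonic blow-down. Sublinear growth forces $d<1$, while a nonconstant harmonic limit must have $d\ge 1$. Constant blow-downs are excluded by the normalization $\sup_{B_1}|v_R| = 1$ together with $v_R(0) = 0$, since a constant limit would then be identically $0$, contradicting the normalization. Passing to the limit requires uniform convergence up to $\partial B_1$, which the Hölder estimate on slightly larger balls provides, after choosing $R$ along a sequence where $M(2R)\le 4M(R)$. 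Such a sequence exists because $M$ has subexponential growth, being $o(R)$. The limit $h$ is then harmonic on $B_2$ with $|h|\le 4$, $h(0) = 0$, $\sup_{B_1}|h| = 1$. The technical crux I expect to be hardest is extracting the growth contradiction from this, namely running the three-spheres/doubling monotonicity for the limits without a monotonicity formula available for the rough equation itself.
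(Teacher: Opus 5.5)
Your argument has a genuine gap: the route you finally commit to cannot close as stated. The terminal configuration you reach, with $h$ harmonic on $B_2$, $|h|\le 4$, $h(0)=0$, $\sup_{B_1}|h|=1$, is satisfied by $h=x_1$, so nothing is contradicted. This is not an accident of bookkeeping. A blow-down sees only boundedly many dyadic scales, and at those scales a sublinear $v$ can look exactly linear. For instance, since $M(R)/R\to 0$, there are arbitrarily large radii $R$ with $M(\lambda R)\le \lambda M(R)$ for all $\lambda\ge 1$. The blow-down at such $R$ is an entire harmonic function with $|h(x)|\le \max\{1,|x|\}$, $h(0)=0$ and $\sup_{B_1}|h|=1$. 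Hence $h=b\cdot x$ with $|b|=1$, which is fully compatible with $M(R)=o(R)$. Suppose you even had an Almgren-type monotonicity, which you correctly note is unavailable for merely continuous $A$, so that blow-downs were homogeneous and $M(2R)/M(R)\to 2^d$. Sublinear growth would still give only $d\le 1$, not $d<1$: $M(R)=R/\log R$ has $M(2R)/M(R)\to 2$. The linear case $d=1$ is exactly what has to be excluded, and no one-scale compactness argument detects it.

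What is missing is a gain that accumulates over infinitely many scales. The Campanato iteration you abandoned supplies this, provided you approximate by the $g$-harmonic space $\operatorname{span}\{1,w_1,w_2,w_3\}$ of Proposition \ref{harmonic-coordiante} rather than by affine functions. Subtracting an affine function destroys the equation; that is where your additive $o(1)M(R)$ error comes from, and that error does not iterate. This route is not circular: it uses only the existence of the $w_i$ and the fact that $w_i(Rx)/R\to x_i$ uniformly on $B_1$. Set $E(R)=\inf_{c,b}\sup_{B_R}|v-c-b\cdot w|$. Compactness gives $E(\theta R)\le \theta^{1+\gamma}E(R)$ for $R\ge R_0$. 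Iterating down from $\theta^{-k}R_1$ gives $E(R_1)\le \theta^{k(1+\gamma)}M(\theta^{-k}R_1)=\theta^{k\gamma}R_1\, o(1)\to 0$. Hence $v=c+b\cdot w$, and sublinearity forces $b=0$.

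The paper instead uses the decay rate $\tau$ quantitatively. Caccioppoli gives $v\in H^1_\mu$ for $\mu$ slightly above $1$. Write $\lap v=\div(B\grad v)$ with $|B|=O(|x|^{-\tau})$ and solve via Proposition \ref{weight-estimate-II}, then Proposition \ref{weight-estimate-I}. In each step the difference from $v$ is a Euclidean harmonic function of sublinear growth, hence constant, and this bootstraps $v-c$ into $H^1_\delta$ with $\delta<0$. De Giorgi--Nash--Moser then shows $v$ is bounded, and the Liouville theorem for bounded solutions finishes the proof.
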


\begin{proof}
    Let $v$ be a $g$-harmonic function on $\R^3$ with $v = o(\vert x\vert)$. Let $\sigma(x) = 1 + \vert x\vert$. 
We choose $\mu > 1$
which is very close to 1. 
By the sublinear assumption $v=o(\vert x\vert)$, we have that 
\[
\int_{B_{2R}-B_R}\vert v\vert^2\sigma^{-2\mu-3}\,dx=o(R^{2-2\mu}).
\]
Taking $R=2^k$ and summing over dyadic annuli, we obtain
\[
\int_{\R^3}\vert v\vert^2 \sigma^{-2\mu-3}\, dx\leq C.
\]
Let $\eta_R$ be a cut-off function on $B_{4R}-B_{R/2}$ such that $\eta_R\equiv 1$ on $B_{2R}-B_{R}$, $\eta_{R}\equiv 0$ outside $B_{4R}-B_{R/2}$ and $|\nabla \eta_R|\leq CR^{-1}$, where $C$ is a uniform constant which is independent of $R$. Plugging in the test function
\[
\varphi=\eta_R^2 v
\]
to the equation satisfied by $v$ and  using the the ellipticity of $A$ and elementary inequalities, we obtain the Caccioppoli type estimate  
\[
\int _{B_{2R}-B_R}|\nabla v|^2\, dx\leq CR^{-2}\int_{B_{4R}-B_{R/2}}v^2\, dx\leq R\, o(R^2)=o(R^{3}).
\]
It follows that  
\[
\int_{B_{2R}-B_R}|\nabla v|^2\sigma^{-2\mu-1}\, dx\leq o(R^{2-2\mu}).
\]
Taking $R=2^k$ and summing over the dyadic annuli, we obtain
\[
\|v\|_{H^1_{\mu}(\R^3)}\leq C.
\]
Next, we next need to improve the estimates. 

Choose $0 < \beta < 1$ very close to $\frac 1 2$ so that $\tau + \beta > \mu$,  and rewrite the $g$-harmonic equation as 
\[
\Delta v=\div\left(B\nabla v\right)=\div(X).
\]
Since $\vert B\vert = O(\vert x\vert^{-\tau})$, it follows that  
\[
\int_{\R^3}|X|^2\sigma^{-2\beta-1}\, dx\leq C\int_{\R^3}|\nabla v|^2\sigma^{-2(\tau+\beta)-1}\, dx\leq C\int_{\R^3}|\nabla v|^2\sigma^{-2\mu-1}\, dx\leq C,
\]
and therefore $X\in L_{\beta-1}^2(\R^3)$. 
Since $0 < \beta < 1$, Proposition \ref{weight-estimate-II} implies that there is a solution $w$ to $\lap w = \div(X)$ satisfying the estimate 
\[
\|w\|_{H^1_\beta} \le C \|X\|_{L^2_{\beta-1}}. 
\]
The difference $h = v - w$ is then a Euclidean harmonic function on $\R^3$.  Since $v$ is sublinear and $w\in L^2_\beta$ with $0<\beta < 1$, the asymptotic expansion for $h$ at infinity must be of the form 
\[
h(x) = c + O(\vert x\vert^{-1})
\]
for some constant $c$. Thus $h$ is bounded and by the classical Liouville theorem $h$ is constant. 
We now have $v = w + c$ and thus $v\in L^2_\beta$ since the constants belong to $L^2_\beta$ for $0 < \beta < 1$. This means that 
\[
\int_{\R^3} v^2 \sigma^{-2\beta - 3}\, dx \le C. 
\]
Repeating the Caccioppoli estimate, it follows that $\grad v \in L^2_{\beta - 1}$ and hence that $v \in H^1_\beta$. We have thus managed to improve the decay of the function $v$. 

We now repeat the process one more time. Assuming $\mu$ is close enough to 1 and $\beta$ is close enough to $\frac 1 2$, we can suppose that $\beta < \tau$. 
Choose a very small $\delta<0$ for which $\tau+\delta> \beta$. Then we have
\[
\int_{\R^3}|X|^2\sigma^{-2\delta-1}\, dx\leq C\int_{\mathbb R^3}|\nabla v|^2\sigma^{-2\delta-2\tau-1}\, dx\leq C,
\]
and therefore $X\in L^2_{\delta-1}(\R^3)$.  By Proposition \ref{weight-estimate-I}, there exists a solution $w_2$ to $\Delta w_2 =\div(X)$ satisfying 
\[
\|w_2\|_{H_{\delta}^1}\leq C\|X\|_{L^2_{\delta-1}}.
\]
Again, by the classical Liouville theorem, the difference $h_2 = v - w_2$ must be constant.  Thus $v = c + w_2$ for some constant $c$. It follows that $w_2$ is also $g$-harmonic with $\div(A \grad w_2) = 0$. Since $w_2 \in L^2_\delta$ with $\delta<0$, the De Giorgi-Nash-Moser theory \cite[Theorem 8.17]{gilbarg1998elliptic} implies that $w_2$ is bounded. More  precisely, for any $R>0$, we have that 
\begin{align*} 
\sup_{B_{2R}-B_R} |w_2|&\leq C R^{-3/2}\|w_2\|_{L^2(B_{4R}-B_{R/2})}\\ 
&\leq C\left(\int_{B_{4R}-B_{R/2}}w_2^{2}\sigma^{-3}\, dx\right)^{1/2}\\
&\leq CR^{\delta}\|w_2\|_{L_{\delta}^2(B_{4R})}\leq CR^{\delta}, \phantom{\int}
\end{align*}
where $C$ is a positive constant independent of $R$ since the $g$-harmonic equation is of divergence form. Since $\delta<0$, taking $R\to\infty$, we obtain the uniform boundedness of $w_2$.

Thus $v$ is a bounded entire solution to $\div(A\grad v) = 0$ and it follows again from the De Giorgi-Nash-Moser theory that $v$ must be constant (c.f. \cite[Theorem 8.22]{gilbarg1998elliptic}). This completes the proof. 
\end{proof}

We can now characterize the space of $g$-harmonic functions with at most linear growth. 

\begin{prop}
The space of $g$-harmonic functions on $\R^3$ with at most linear growth is spanned by $1$, $w_1$, $w_2$, $w_3$. 
\end{prop}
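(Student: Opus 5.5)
Let $v$ be a $g$-harmonic function on $\R^3$ with $\vert v(x)\vert \le C(1+\vert x\vert)$. The plan is to subtract a suitable linear combination of the $w_i$ from Proposition \ref{harmonic-coordiante} so that the remainder has sublinear growth, and then apply the Liouville theorem Proposition \ref{prop: liouville}. The obvious inclusion is immediate: $1,w_1,w_2,w_3$ are $g$-harmonic, and each $w_i$ has at most linear growth because $w_i - x_i = o(\vert x\vert)$. The work is in the reverse inclusion, which requires extracting a ``linear part at infinity'' $a\in\R^3$ from $v$.

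\textbf{Blow-down.} Set $v_R(y) = R^{-1} v(Ry)$ for $R\to\infty$, so that $\vert v_R(y)\vert \le C(1+\vert y\vert)$ uniformly on compact sets. Each $v_R$ solves $\div(A_R \grad v_R)=0$ with $A_R(y)=A(Ry)$, and $A_R \to I$ locally uniformly on $\R^3-\{0\}$. In fact $A_R$ is uniformly elliptic on all of $\R^3$, so De Giorgi--Nash--Moser gives uniform local H\"older bounds on $v_R$ over all of $\R^3$, including near the origin. Along a subsequence we then get $v_R \to h$ locally uniformly, and Caccioppoli estimates give weak $H^1_{\text{loc}}$ convergence. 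The limit $h$ is Euclidean harmonic on $\R^3-\{0\}$, continuous across $0$, and bounded near $0$, so the singularity is removable and $h$ is harmonic on $\R^3$. Since $\vert h(y)\vert\le C(1+\vert y\vert)$, the classical Liouville theorem shows $h$ is affine. Moreover $h(0)=\lim R^{-1}v(0)=0$, so $h(y)=a\cdot y$ for some $a\in\R^3$.

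\textbf{Uniqueness of $a$ — the main obstacle.} A priori $a$ could depend on the subsequence, and even if it does not, the bare convergence $v_R\to a\cdot y$ does not by itself give $v - a\cdot x = o(\vert x\vert)$ unless the convergence is uniform on a fixed annulus for the full limit $R\to\infty$. I plan to avoid this issue by running the weighted-space machinery directly on $v$, as in the proof of Proposition \ref{prop: liouville}:
\begin{enumerate}
\item Caccioppoli with the bound $\vert v\vert\le C\vert x\vert$ gives $\int_{B_{2R}-B_R}\vert\grad v\vert^2 \le CR^3$, hence $\grad v\in L^2_{\mu-1}$ for any $\mu>1$.
\item Writing $\lap v=\div(B\grad v)=\div(X)$, the decay $\vert B\vert = O(\vert x\vert^{-\tau})$ gives $X\in L^2_{\mu-\tau-1}$. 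Choosing $\mu$ close to $1$, we have $\mu-\tau<1$.
\item If $\mu-\tau\in(0,1)$, Proposition \ref{weight-estimate-II} produces $w$ with $\lap w=\div X$ and $w\in H^1_{\mu-\tau}$. This gives $w=o(\vert x\vert)$ in an averaged sense, upgradable to pointwise by De Giorgi--Nash--Moser on dyadic annuli, exactly as was done for $w_2$ earlier. (If $\mu-\tau\le 0$, use Proposition \ref{weight-estimate-I} instead.)
\item Then $v-w$ is Euclidean harmonic of at most linear growth on $\R^3$, hence equal to $c+a\cdot x$.
\end{enumerate}
Together these steps show $v - a\cdot x = o(\vert x\vert)$ pointwise, which pins down $a$ uniquely.

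\textbf{Conclusion.} Let $\tilde v = v - \sum_i a_i w_i$. This function is $g$-harmonic, and
\[
\tilde v = (v - a\cdot x) - \sum_i a_i (w_i - x_i) = o(\vert x\vert).
\]
Proposition \ref{prop: liouville} then shows $\tilde v$ is constant, so $v\in\operatorname{span}\{1,w_1,w_2,w_3\}$. The delicate point is step~3 of the previous paragraph: converting the weighted integral bound on $w$ into a pointwise $o(\vert x\vert)$ estimate. I will handle it as in the proof of Proposition \ref{prop: liouville}, by noting that $w = v - c - a\cdot x$ is itself a solution of a divergence-form equation with bounded coefficients outside a compact set, which lets me apply the mean-value (sup) bound on annuli $B_{2R}-B_R$.
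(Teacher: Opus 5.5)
Your argument is correct and is essentially the paper's proof. In both, Caccioppoli gives $v\in H^1_\mu$ for $\mu$ slightly above $1$, one solves $\lap w=\div(B\grad v)$ via Proposition \ref{weight-estimate-II} with a weight in $(\max\{0,\mu-\tau\},1)$, Liouville makes the Euclidean harmonic $v-w$ affine, and Proposition \ref{prop: liouville} finishes; the blow-down paragraph is not needed.

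Two small cleanups. First, taking the weight in $(\max\{0,\mu-\tau\},1)$ removes your case split, since $L^2_{\delta-1}\subset L^2_{\delta'-1}$ for $\delta'>\delta$; this matters because Proposition \ref{weight-estimate-I} excludes $\delta=0$ and $\delta\le -2$. Second, $w=v-c-a\cdot x$ is not $g$-harmonic as your final paragraph suggests; it solves $\div(A\grad w)=\div(Ba)$ with $\vert Ba\vert=O(\vert x\vert^{-\tau})$. The inhomogeneous De Giorgi--Nash--Moser sup bound therefore only adds an $O(R^{1-\tau})=o(R)$ term, and the Liouville step for $v-w$ needs only the averaged bound, so there is no circularity.
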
 

\begin{proof} 

Suppose $v$ is a $g$-harmonic function with at most linear growth. For any $1<\mu<\min\{2,1+\tau\}$, we have that 
\[
v\in L^2_{\mu}(\R^3).
\]
By the Caccioppoli type estimates in the proof of the previous proposition, we have that 
\[
\|v\|_{H_{\mu}^1(\R^3)}\leq C.
\]
It is now straightforward to verify that 
\[
X\in L^2_{\beta-1}(\R^3),\quad \forall \max\{0,\mu-\tau\}<\beta<1.
\]
By Proposition \ref{weight-estimate-II}, we know there exists a unique weak solution to $\Delta w=\div(X)$ satisfying the normalization 
\[
\int_{B_2}w\, dx=0.
\]
Moreover, $w\in H^1_\beta$. 
By the De Giorgi-Nash-Moser theory, this implies that that $w\sim o(\vert x\vert)$ at infinity.  

Now, we let $h=v-w$. Then $h$ is a Euclidean harmonic function and $h\in H_{\mu}^1$. By the classical Liouville's theorem, we know 
\[
h=a_1x_1 + a_2 x_2 + a_3x_3 +c,
\]
for some constants $a_i, c\in\R$.
Therefore, we can  rewrite the asymptotics of $v$ as
\[
v\sim a_1x_1 + a_2x_2 + a_3 x_3 +o(\vert x\vert).
\]
By Proposition \ref{harmonic-coordiante} we see that 
\[
v-a_1w_1 -a_2w_2 - a_3w_3 \sim o(\vert x\vert)
\]
is a $g$-harmonic function with sublinear growth on $\R^3$. Therefore $v - a_1w_1 - a_2w_2 - a_3w_3$ must be a constant by Proposition \ref{prop: liouville}, and the result follows.
\end{proof} 

Finally, we extend this further to functions with sub-quadratic growth $O(\vert x\vert^{2-\beta})$. 

\begin{prop}
    Assume that $v$ is a $g$-harmonic function on $\R^3$ with $\vert v(x)\vert = O(\vert x\vert^{2-\beta})$ for some $\frac 1 2 < \beta < \tau$. Then $v \in \operatorname{span}\{1,w_1,w_2,w_3\}$. 
\end{prop}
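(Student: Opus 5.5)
We follow the same template as the proofs of Proposition \ref{prop: liouville} and the linear-growth characterization: first place $v$ in a weighted Sobolev space, then split $v$ into a Euclidean harmonic part and a small correction.

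\textbf{Step 1: weighted energy bound.} Since $\vert v\vert = O(\vert x\vert^{2-\beta})$, summing over dyadic annuli gives $v\in L^2_\mu$ for every $\mu > 2-\beta$. Choose $\mu$ slightly larger than $2-\beta$, so that $\mu<2$ and $\mu-\tau<2-\beta-\tau<1$. The Caccioppoli estimate (test with $\eta_R^2 v$) yields
\[
\int_{B_{2R}-B_R}\vert\grad v\vert^2\,dx \le CR^{-2}\int_{B_{4R}-B_{R/2}} v^2\,dx = O(R^{5-2\beta}),
\]
and hence $v\in H^1_\mu$.

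\textbf{Step 2: decompose $v$.} Write $\lap v = \div(B\grad v) = \div(X)$. Since $\vert B\vert = O(\vert x\vert^{-\tau})$, we get $X\in L^2_{\gamma-1}$ for every $\gamma>\mu-\tau$. Because $\beta>\frac12$ and $\tau>\frac12$, we have $\mu-\tau < 2-\beta-\tau<1$. We can therefore choose $\gamma$ with $\max\{0,\mu-\tau\}<\gamma<1$ (and $\gamma\neq 0$). Proposition \ref{weight-estimate-II} then produces a solution $w\in H^1_\gamma$ of $\lap w=\div(X)$. By De Giorgi--Nash--Moser mean value bounds on annuli, as in the proof of Proposition \ref{prop: liouville}, $w=O(\vert x\vert^{\gamma'})$ for some $\gamma'<1$; more precisely, the local $L^2$ to $L^\infty$ estimate for the Euclidean Laplacian applied to $w$ needs care, so we will instead control $h$ first. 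The function $h=v-w$ is Euclidean harmonic on $\R^3$ and lies in $L^2_\mu$ with $\mu<2$. By the classical Liouville theorem for polynomial-growth harmonic functions, averaged in $L^2$, $h$ is a harmonic polynomial of degree at most $1$:
\[
h = c + a_1x_1+a_2x_2+a_3x_3 .
\]

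\textbf{Step 3: conclude.} Consider $v' = v - a_1w_1-a_2w_2-a_3w_3 - c$, using the $w_i$ from Proposition \ref{harmonic-coordiante}. Then $v'$ is $g$-harmonic and $v' = w - \sum_i a_i(w_i-x_i)$. To show $v'=o(\vert x\vert)$, note that $w\in L^2_\gamma$ with $\gamma<1$ and $w_i - x_i=o(\vert x\vert)$, so $v'\in L^2_{\gamma''}$ for some $\gamma''<1$. Since $v'$ is $g$-harmonic, the De Giorgi--Nash--Moser sup bound on annuli (scale invariant for divergence-form equations) gives
\[
\sup_{B_{2R}-B_R}\vert v'\vert \le CR^{\gamma''} = o(R).
\]
Proposition \ref{prop: liouville} then shows $v'$ is constant, so $v\in\operatorname{span}\{1,w_1,w_2,w_3\}$.

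\textbf{Main obstacle.} The delicate point is the weight bookkeeping in Step 2: we need $\gamma<1$ and $\gamma>\mu-\tau$ simultaneously. This is exactly where $\beta+\tau>1$ enters, and it is guaranteed by $\beta,\tau>\frac12$. If the single-step gain of $\tau$ in the weight were insufficient, one could iterate the bootstrap as in the proof of Proposition \ref{prop: liouville}, improving the weight step by step. Here, however, a single step appears to suffice. One should also avoid the non-generic weights excluded in the appendix propositions (such as $\gamma=0$), which can always be done by perturbing $\gamma$ slightly.
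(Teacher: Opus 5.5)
Your proposal is correct and is essentially the paper's argument: the paper takes $\mu=\frac32\in(2-\beta,1+\tau)$ and reruns the linear-growth proof (weighted Caccioppoli, Proposition \ref{weight-estimate-II}, Liouville for $h$, then Proposition \ref{prop: liouville}), and your choice of $\mu$ just above $2-\beta$, together with applying De Giorgi--Nash--Moser to the $g$-harmonic $v'$ instead of to $w$, is only a cosmetic variant. There are two small slips to fix. First, since $\mu>2-\beta$, the chain should read $2-\beta-\tau<\mu-\tau<1$, which holds for $\mu$ close to $2-\beta$ because $\beta+\tau>1$. Second, $w_i-x_i=o(\vert x\vert)$ alone does not put $v'$ in $L^2_{\gamma''}$ with $\gamma''<1$; you do not need this, however, since the annular $L^2$--$L^\infty$ bound already gives $\sup_{B_{2R}-B_R}\vert v'\vert\le CR^{\gamma}+o(R)=o(R)$.
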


\begin{proof}
This is a straightforward extension of the previous result. Indeed, taking $\mu = \frac 3 2$ we have $2-\beta < \mu$ and thus $v \in H^1_\mu(\R^3)$. Since $\mu < 1 +\tau$, the proof of the previous proposition now goes through unchanged. 
\end{proof}

By the previous propositions, for each $y$ we have 
\[
\rho_0^2 - \rho_y^2 = b_0(y) + b_1(y)w_1 + b_2(y)w_2 + b_3(y) w_3
\]
for some constants $b_0,b_1,b_2,b_3$.  This implies that 
\[
\grad^g (\rho_y^2) = \grad^g(\rho_0^2) - \sum_{i=1}^3 b_i(y) \grad^g w_i. 
\]
We further note that $\vert \grad^g (\rho_y^2)\vert^2 = 4\rho_y^2$ almost everywhere. 

\begin{prop}
The coefficients $b_0(y),b_1(y),b_2(y),b_3(y)$ depend continuously on $y$. 
\end{prop}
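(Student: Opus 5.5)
The plan is to reduce continuity of the coefficients to continuity of $y\mapsto \rho_y^2$ in a local topology, and then invert the linear relation using the linear independence of $1,w_1,w_2,w_3$. Since $1,w_1,w_2,w_3$ are linearly independent (the $w_i$ are asymptotic to the coordinate functions $x_i$ by Proposition \ref{harmonic-coordiante}), we can choose four points $z_0,\dots,z_3\in\R^3$ such that the $4\times 4$ matrix $M$ with rows $(1,w_1(z_j),w_2(z_j),w_3(z_j))$ is invertible. Indeed, if every such matrix were singular, the functions $1,w_1,w_2,w_3$ would satisfy a nontrivial linear relation pointwise, contradicting their independence. Evaluating the identity
\[
\rho_0^2 - \rho_y^2 = b_0(y) + \sum_{i=1}^3 b_i(y) w_i
\]
at the points $z_j$ then gives $(b_0(y),\dots,b_3(y))^T = M^{-1}\big(\rho_0^2(z_j)-\rho_y^2(z_j)\big)_j$. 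This evaluation is legitimate because $u_y$ is continuous away from $y$ and $\rho_y = u_y^{-1}$ is Lipschitz. Hence it suffices to show that $y\mapsto \rho_y(z)$ is continuous for each fixed $z$, or equivalently that $y\mapsto u_y(z)$ is continuous whenever $z\neq y$; at $z=y$ we have $\rho_y(z)=0$, and this case is handled by the Lipschitz bound $|\rho_y(x)|\le C|x-y|$.

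To prove continuity of $y\mapsto u_y(z)$, take $y_n\to y$. On any compact set $K$ avoiding a neighborhood of $y$, the Green's function bounds $C_1|x-y_n|^{-1}\le u_{y_n}(x)\le C_2|x-y_n|^{-1}$ are uniform in $n$, with constants depending only on ellipticity (Gr\"uter--Widman). The De Giorgi--Nash--Moser H\"older estimates then make $u_{y_n}$ equicontinuous on $K$. We also need the Meyers bound in $W^{1,p}_{\text{loc}}$. By Arzel\`a--Ascoli and a diagonal argument, a subsequence converges locally uniformly, and weakly in $W^{1,p}_{\text{loc}}(\R^3-\{y\})$, to a function $v$. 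This limit satisfies $\div(A\grad v)=0$ away from $y$, obeys the same two-sided bounds near $y$, decays at infinity, and has flux $4\pi$; the flux is preserved because the convergence can be upgraded to strong $W^{1,p}$ convergence, exactly as in Proposition \ref{W1pConvergence}.

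The main obstacle is identifying $v$ with $u_y$, which requires uniqueness of the normalized Green's function. I would obtain it as follows. Both $v$ and $u_y$ solve $\div(A\grad\,\cdot)=\delta_y$ (up to normalization) in the distributional sense on all of $\R^3$: the flux condition together with the bounds near the pole lets us pass the equation across $y$ by a cutoff argument, as in the proof that $\lap_g(\rho^2)=6$. Their difference is then $g$-harmonic on $\R^3$, and it is bounded, since $|v-u_y|$ is controlled by $C|x-y|^{-1}$ near $y$ (handled by a removable singularity result for divergence-form equations, Serrin) and tends to $0$ at infinity. By the Liouville property for divergence-form equations, the difference is constant, hence zero. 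Since every subsequence has the same limit, $u_{y_n}(z)\to u_y(z)$. Therefore $\rho_{y_n}^2(z_j)\to\rho_y^2(z_j)$ for each $j$, and applying $M^{-1}$ shows that the $b_i$ are continuous in $y$.
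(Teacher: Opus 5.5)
Your proof is correct, but it handles the key step, continuity of $y\mapsto u_y(z)$ for $z\neq y$, by a different route from the paper's. The paper works locally. It fixes $y_0$ and a neighborhood $U$, takes $q_0\notin U\cup\{0\}$, and places $q_1,q_2,q_3$ far out on the coordinate axes; invertibility of the evaluation matrix comes from $w_i\sim x_i$. It then uses the two-variable Green's function of Gr\"uter--Widman and its symmetry $G(x,y)=G(y,x)$ to write $u_y(q_i)=u_{q_i}(y)$. Continuity in the pole thereby becomes continuity of a single Green's function in its spatial variable, which is immediate.

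You instead pick global evaluation points from the linear independence of $1,w_1,w_2,w_3$. This forces you to treat $y=z_j$ separately, which you do correctly via the uniform bound $|\rho_y(x)|\le C|x-y|$ coming from $|\nabla^g\rho_y|=1$. You then prove continuity in the pole by compactness plus uniqueness of the normalized Green's function. The paper also uses this uniqueness implicitly when it identifies $u_y$ with $G(\cdot,y)$, but does not prove it. Your route avoids the symmetry theorem and gives the stronger conclusion $u_{y_n}\to u_y$ locally uniformly away from $y$. The cost is much more machinery: uniform Green's bounds in the pole, De Giorgi--Nash--Moser equicontinuity, passing the flux to the limit, an isolated-singularity theorem, and Liouville. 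The paper's argument is a few lines once Gr\"uter--Widman is available.

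One step should be stated more carefully. The bound $|v-u_y|\le C|x-y|^{-1}$ alone does not make the singularity removable, since $u_y$ itself satisfies it. What you need is the B\^ocher-type form of Serrin's result: a solution in a punctured ball that is $O(|x-y|^{-1})$ equals $\gamma u_y$ plus a regular solution. The vanishing flux of $v-u_y$, which you established by showing both solve the same distributional equation across $y$, then gives $\gamma=0$.

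Separately, preserving the flux in the limit needs only weak convergence. Up to sign, the flux equals $\int A\nabla u\cdot\nabla\zeta\,dx$ for a compactly supported cutoff $\zeta$ equal to $1$ near $y$. So the appeal to strong convergence, and to Proposition \ref{W1pConvergence}, is unnecessary.
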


\begin{proof} 
Fix a point $y_0$ and let $U$ be a small neighborhood of $y_0$. Fix any point $q_0 \notin U\cup\{0\}$ and then let $q_1 = (r,0,0)$, $q_2=(0,r,0)$, $q_3=(0,0,r)$ for some fixed large $r > 0$. Since $w_i$ is asymptotic to $x^i$, the matrix 
\[
A = \begin{pmatrix}
    1 & w_1(q_0) & w_2(q_0) & w_3(q_0)\\ 
    1 & w_1(q_1) & w_2(q_1) & w_3(q_1)\\
    1 & w_1(q_2) & w_2(q_2) & w_3(q_2)\\
    1 & w_1(q_3) & w_2(q_3) & w_3(q_3)
\end{pmatrix}
\]
will be invertible provided $r$ is chosen large enough. Now observe that for $y\in U$ we have  
\[
A \begin{pmatrix}
    b_0(y) \\ b_1(y)\\b_2(y)\\ b_3(y)
\end{pmatrix} = \begin{pmatrix}
    (\rho_0^2 - \rho_y^2)(q_0)\\
    (\rho_0^2 - \rho_y^2)(q_1)\\
    (\rho_0^2 - \rho_y^2)(q_2)\\
    (\rho_0^2 - \rho_y^2)(q_3)
\end{pmatrix}.
\]
Since $A$ is invertible, the continuity of $b_i$ at $y_0$ will follow provided we can show the functions $y\mapsto  \rho_y^2(q_i)$ are continuous functions of $y\in U$. Since no $q_i$ belongs to $U$, it is equivalent to show the continuity of $y\mapsto u_y(q_i)$. Now the construction in \cite[Theorem (1.1)]{gruter1982green} actually supplies a 2-variable Green's function $G(x,y)$ with the property that $u_y(x) = G(x,y)$. Moreover, by \cite[Theorem (1.3)]{gruter1982green}, the 2-variable Green's functions has the symmetry $G(x,y) = G(y,x)$ and hence $u_y(q_i) = u_{q_i}(y)$.  But $y\mapsto u_{q_i}(y)$ is clearly continuous and the result follows. 
\end{proof}

\begin{prop}
The map $y\mapsto \mathbf b(y) = (b_1(y),b_2(y),b_3(y))$ is injective and its image is an open set. 
\end{prop}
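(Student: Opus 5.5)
We want to show $y\mapsto \mathbf b(y)$ is injective and has open image. The plan is to first express $\mathbf b(y)$ directly in terms of the harmonic coordinates $w_i$, so that $\mathbf b$ is essentially the map $W=(w_1,w_2,w_3)$ up to an affine change. Then invariance of domain finishes the argument.

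\textbf{Step 1: an identity linking $\mathbf b(y)$ and the $w_i$.} Set $\rho_y^2 = \rho_0^2 - b_0(y) - \sum_i b_i(y) w_i$. Evaluating at $x=y$, where $\rho_y(y)=0$, gives $\rho_0^2(y) = b_0(y) + \sum_i b_i(y)w_i(y)$. Next, use the eikonal identities almost everywhere:
\[
4\rho_y^2 = \Big|\grad^g(\rho_0^2) - \sum_i b_i(y)\grad^g w_i\Big|^2 .
\]
Expanding, and using $|\grad^g(\rho_0^2)|^2 = 4\rho_0^2$, we get
\[
-4b_0 - 4\sum_i b_i w_i = -2\sum_i b_i\, g(\grad^g\rho_0^2,\grad^g w_i) + \sum_{i,j} b_ib_j\, g(\grad^g w_i,\grad^g w_j)
\]
almost everywhere, for every $y$.

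\textbf{Step 2: identify the Gram matrix.} Since the identity in Step 1 holds for all $y$ and $\mathbf b$ is continuous, I would extract information about the bounded functions $G_{ij}=g(\grad^g w_i,\grad^g w_j)$ and $P_i=g(\grad^g\rho_0^2,\grad^g w_i)$. Suppose the vectors $\mathbf b(y)$ span enough of $\R^3$, i.e.\ the image of $\mathbf b$ is not contained in a lower-dimensional affine/quadric set. Then comparing the identity for several $y$ gives that $G_{ij}$ is constant almost everywhere, and that $P_i = 2w_i + \text{const}$. From $\rho_y\ge0$ and asymptotics I would then normalize $G=\delta$.

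An alternative route to Step 2 is to compare asymptotics at infinity. Both $\rho_y(x)$ and $\rho_0(x)$ are $|x|+o(|x|)$, so $\rho_0^2-\rho_y^2 = 2\langle x, y'\rangle + o(|x|)$ for a suitable vector. Consequently $\mathbf b(y) \ne 0$ whenever $y\ne 0$: if $\mathbf b(y)=0$, then $\rho_y^2-\rho_0^2$ is constant, and evaluating at $0$ and $y$ gives $\rho_y(0)^2 = -\rho_0(y)^2$, forcing $y=0$.

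\textbf{Step 3: injectivity.} If $\mathbf b(y_1)=\mathbf b(y_2)$, then $\rho_{y_1}^2-\rho_{y_2}^2 = b_0(y_2)-b_0(y_1)$ is a constant $c$. Evaluate at $y_1$ and at $y_2$: $-\rho_{y_2}(y_1)^2 = c = \rho_{y_1}(y_2)^2$. This forces both to vanish, so $y_1=y_2$, since $\rho_y>0$ away from $y$. This gives injectivity directly, without Step 2.

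\textbf{Step 4: openness.} Since $\mathbf b\colon\R^3\to\R^3$ is continuous (previous proposition) and injective (Step 3), Brouwer's invariance of domain shows that $\mathbf b$ is an open map. In particular $\mathbf b(\R^3)$ is open.

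So the argument reduces to Steps 3 and 4, and Steps 1–2 are not actually needed for this statement. The main subtlety is justifying that $\rho_y$ is continuous, vanishes only at $y$, and that the identity holds pointwise rather than merely almost everywhere. Both sides are continuous functions: $u_y$ is continuous away from $y$ and tends to $\infty$ at $y$, and the $w_i$ are continuous by De Giorgi–Nash–Moser. So the equality holds everywhere, including at $x=y_1,y_2$.
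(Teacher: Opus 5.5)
Your Steps 3 and 4 are exactly the paper's argument. Equality $\mathbf b(y_1)=\mathbf b(y_2)$ makes $\rho_{y_1}^2-\rho_{y_2}^2$ constant, and evaluating at $y_1$ and $y_2$ gives a sign contradiction; invariance of domain, together with the continuity of $\mathbf b$ from the previous proposition, then gives openness of the image. You are right to discard Steps 1--2. In particular, the spanning hypothesis in Step 2 must not be relied on here: the paper derives constancy of the Gram matrix $g(\grad^g w_i,\grad^g w_j)$ precisely from the openness of the image of $\mathbf b$ proved in this proposition, so using it would be circular.
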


\begin{proof}
Suppose for contradiction that the map is not injective. Then there are two distinct points $y_1$ and $y_2$ with $\mathbf b(y_1) = \mathbf b(y_2) = (b_1,b_2,b_3)$. This means that 
\begin{gather*}
\rho_0^2 - \rho_{y_1}^2 = b_0(y_1) + w,\\
\rho_0^2 - \rho_{y_2}^2 = b_0(y_2) + w
\end{gather*}
for the same function $w = b_1 w_1 + b_2 w_2 + b_3 w_3$. Subtracting these equations, we get that $\rho_{y_1}^2 - \rho_{y_2}^2$ is constant. But this is impossible since we have $(\rho_{y_1}^2 - \rho_{y_2}^2)(y_1) = -\rho_{y_2}^2(y_1) < 0$ and $(\rho_{y_1}^2-\rho_{y_2}^2)(y_2) = \rho_{y_1}^2(y_2) > 0$.  This proves that $y\mapsto \mathbf b(y)$ is injective. Finally, since $y\mapsto \mathbf b(y)$ is injective and continuous, invariance of domain implies that the image of this map is an open set. 
\end{proof}

Next, for each fixed $y$, we calculate that 
\begin{align*}
\left\vert \grad^g (\rho_0^2)  -  \sum_{i=1}^3 b_i(y) \grad^g w \right\vert^2 &= \vert \grad^g (\rho_y^2)\vert^2 = 4\rho_y^2\\
& = 4\rho_0^2 - 4 \sum_{i=1}^3 b_i(y) w_i + b_0(y). 
\end{align*} 
On the other hand, expanding the square we have 
\begin{align*}
&\left\vert \grad^g (\rho_0^2)  -  \sum_{i=1}^3 b_i(y) \grad^g w_i \right\vert^2 \\
&\qquad = \vert \grad^g(\rho_0^2)\vert^2 - 2 g(\grad^g (\rho_0)^2, \sum_{i=1}^3 b_i(y)\grad^g w_i) + \sum_{i,j=1}^3 b_i(y)b_j(y) g(\grad^g w_i, \grad^g w_j)\\
&\qquad = 4\rho_0^2 - 2 g(\grad^g (\rho_0)^2, \sum_{i=1}^3 b_i(y)\grad^g w_i) + \sum_{i,j=1}^3 b_i(y)b_j(y) g(\grad^g w_i, \grad^g w_j).
\end{align*} 
Therefore we deduce that 
\[
-2g\left(\grad^g(\rho_0^2), \sum_{i=1}^3 b_i(y)\grad^g w_i\right) + \sum_{i,j=1}^3 b_i(y)b_j(y) g(\grad^g w_i,\grad^g w_j) = -4\sum_{i=1}^3 b_i(y)w_i + b_0(y)
\]
as $L^1_{\text{loc}}$ functions of $x$ for every $y\in \R^3$. We re-write this as 
\begin{equation}
\label{b-coeff-eqn}
\sum_{i,j=1}^3 B^{ij}(x) b_i(y) b_j(y) + \sum_{i=1}^3  \big[4w_i(x) - 2 A_i(x)\big] b_i(y)  = b_0(y)
\end{equation}
where $A_i(x) = g(\grad^g (\rho_0^2), \grad^g w_i)$ and $B^{ij}(x) =g(\grad^g w_i, \grad^g w_j)$. 
Again, for each fixed $y\in \R^3$, this is an equality of $L^1_{\text{loc}}$ functions of $x$. 
Since the variables $(b_1(y),b_2(y),b_3(y))$ range over an open set, this implies that there is a fixed matrix $B^{ij}$ such that $B^{ij}(x) = B^{ij}$ almost-everywhere.    

\begin{prop}
    The matrix $B$ is the identity. 
\end{prop}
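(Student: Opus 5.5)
We need to show the constant matrix $B^{ij}$ with $g(\grad^g w_i,\grad^g w_j) = B^{ij}$ almost everywhere equals $\delta^{ij}$. The plan is to use the asymptotics of $w_i$ at infinity: $w_i = x_i + v_i$ with $v_i$ sublinear, and the metric $g$ tends to the Euclidean metric at infinity. Since $B^{ij}$ is constant, it equals its average over any region. I will average over large annuli $\an(R) = B(0,4R)-B(0,R)$ after rescaling, and show the averages converge to $\delta^{ij}$.

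Concretely, let $w_{i,R}(x) = R^{-1} w_i(Rx)$ on the fixed annulus $\an$. By Proposition \ref{harmonic-coordiante}, $w_{i,R} \to x_i$ uniformly on $\an$ (indeed on $B(0,4)$), since $(w_i(x)-x_i)/\vert x\vert \to 0$. The rescaled functions solve $\div(A_R\grad w_{i,R}) = 0$ with $A_R(x) = A(Rx)\to I$ uniformly on a neighborhood of $\cl{\an}$. Exactly as in Proposition \ref{prop-decay} (or the positivity section), write $w_{i,R} - x_i =: z_R$, which satisfies $\div(A_R\grad z_R) = \div((I-A_R)e_i)$. Meyers' $L^p$ gradient estimate on nested annuli $\an_2\subset\an_1$ gives
\[
\|\grad z_R\|_{L^2(\an_2)} \le C\|z_R\|_{L^2(\an_1)} + C\|I-A_R\|_{L^\infty(\an_1)} \to 0.
\]
Hence $\grad w_{i,R}\to e_i$ in $L^2(\an_2)$, and with $g_R(x)=g(Rx)\to\delta$ uniformly we get $g_R(\grad^{g_R} w_{i,R},\grad^{g_R} w_{j,R}) \to \delta_{ij}$ in $L^1(\an_2)$. (Even a simpler Caccioppoli estimate suffices here instead of Meyers.)

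On the other hand, by scaling, $g_R(\grad^{g_R} w_{i,R},\grad^{g_R} w_{j,R})(x) = g(\grad^g w_i,\grad^g w_j)(Rx) = B^{ij}$ almost everywhere. Therefore $B^{ij}\,\vert\an_2\vert = \int_{\an_2} g_R(\grad^{g_R} w_{i,R},\grad^{g_R} w_{j,R})\,dx \to \delta_{ij}\vert\an_2\vert$, and so $B^{ij}=\delta^{ij}$.

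The only mildly delicate step is the gradient convergence: uniform $C^0$ convergence $w_{i,R}\to x_i$ does not by itself give gradient convergence. This is handled by the energy estimate for $z_R$ with the right-hand side $\div((I-A_R)e_i)$, whose $L^2$ norm goes to zero. Testing against $\eta^2 z_R$ with a cutoff $\eta$ gives
\[
\int\eta^2\vert\grad z_R\vert^2 \le C\int\vert\grad\eta\vert^2 z_R^2 + C\|I-A_R\|_{L^\infty}^2,
\]
which tends to zero and suffices.
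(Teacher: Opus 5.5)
Your argument is correct, and its skeleton matches the paper's. Both average the constant $B^{ij}=g(\grad^g w_i,\grad^g w_j)$ over large annuli and show the averages tend to $\delta_{ij}$. The essential input in both is that
\[
R^{-3}\int_{B_{2R}-B_R}\vert\grad(w_i-x_i)\vert^2\,dx\to 0 .
\]

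Where you differ is in how this averaged gradient smallness is obtained.

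\textbf{The paper's route.} It goes back into the construction in Proposition \ref{harmonic-coordiante} and writes $w_i-x_i=\tilde v_i+u_i$, with $\grad\tilde v_i\in L^p_{\delta-1}$ and $u_i\in\dot H^1$. It invokes Bartnik's weighted embedding to get $\grad\tilde v_i\in L^2_{\beta-1}$. It then compares $g$ with the Euclidean inner product on the annulus. Finally it disposes of the linear cross terms $\la\grad x_i,\grad v_j\ra$ by integrating by parts against the harmonic function $x_i$, controlling the boundary terms by the sublinearity of $v_j$.

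\textbf{Your route.} You use only the stated conclusion $w_i-x_i=o(\vert x\vert)$. After rescaling, $z_R\to 0$ uniformly and solves $\div(A_R\grad z_R)=\div((I-A_R)e_i)$, with $A_R\to I$ uniformly away from the origin. A Caccioppoli estimate then gives $\grad w_{i,R}\to e_i$ strongly in $L^2$ on a fixed annulus. The $L^1$ convergence of $g_R(\grad^{g_R}w_{i,R},\grad^{g_R}w_{j,R})$ to $\delta_{ij}$ follows immediately, with no cross-term bookkeeping.

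\textbf{Comparison.} Your version is more elementary and more robust. It applies to any $g$-harmonic function asymptotic to $x_i$ in the sublinear sense, independently of how $w_i$ was produced. The paper's version reuses weighted estimates it already has in hand from the existence proof.

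One small point on your energy estimate: the mixed term $\|I-A_R\|_{L^\infty}\int\vert z_R\vert\,\vert\grad\eta\vert$ is absorbed by Young's inequality into the two terms you display, so the inequality holds as written.
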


\begin{proof} 
The proof of Proposition \ref{harmonic-coordiante} shows that $w_i = x_i + v_i$ where 
\[
v_i = \tilde v_i + u_i.
\]
Here $\grad v_i \in L^{p}_{\delta-1}$ for some $p > 3$ and $0 < \delta < 1$ and $u_i \in \dot H^1$.  By \cite[Theorem 1.2(i)]{bartnik1986mass}, we have $\grad v_i \in L^2_{\beta-1}$ for some $\delta < \beta < 1$ and so 
\[
\int_{\R^3} \vert \grad \tilde v_i\vert^2 (1+\vert x\vert)^{-2\beta-1}\, dx < \infty.
\]
This implies that 
\[
R^{-3} \int_{B_{2R}-B_R} \vert \grad \tilde v_i\vert^2 \,dx \to 0.
\]
Likewise, we have 
\[
\int_{\R^3} \vert \grad u_i\vert^2\, dx < \infty
\]
and so certainly 
\[
R^{-3} \int_{B_{2R}-B_R} \vert \grad u_i\vert^2\, dx < \infty. 
\]
Combining these estimates, we have 
\[
R^{-3} \int_{B_{2R}-B_R} \vert \grad  v_i\vert^2 \,dx \to 0.
\]
With this in place, we now show that $B$ is the identity.

Observe that 
\begin{align*}
  &\frac{1}{\vert B_{2R}-B_R\vert}\int_{B_{2R-B_R}} \vert g(\grad^g w_i,\grad^g w_j) - g_{\text{euc}}(\grad w_i,\grad w_j)\vert \, dx \\
  &\qquad \le C R^{-3-\tau} \int_{B_{2R}-B_R} \vert \grad w_i\vert \vert \grad w_j\vert\, dx\\
  &\qquad \le C R^{-3-\tau} \int_{B_{2R}-B_R} \vert \grad x_i\vert^2 + \vert \grad v_i\vert^2 + \vert \grad x_j\vert^2 + \vert \grad v_j\vert^2\, dx \to 0 
\end{align*}
as $R\to \infty$. In particular, we have 
\[
B^{ij} = \lim_{R\to \infty} \frac{1}{\vert B_{2R}-B_R\vert} \int_{B_{2R}-B_R} \la \grad w_i,\grad w_j\ra\, dx.
\]
We calculate that 
\begin{align*}
    &\frac{1}{\vert B_{2R}-B_R\vert}\int_{B_{2R}-B_R} \la \grad w_i,\grad w_j\ra \, dx \\
    &\qquad = \frac{1}{\vert B_{2R}-B_R\vert}\int_{B_{2R}-B_R} \la \grad x_i,\grad x_j\ra + \la \grad x_i,\grad v_j\ra + \la \grad v_i,\grad x_j\ra + \la \grad v_i,\grad v_j\ra\, dx\\
    &\qquad = \delta_{ij} + \frac{1}{\vert B_{2R}-B_R\vert}\int_{B_{2R}-B_R}  \la \grad x_i,\grad v_j\ra + \la \grad v_i,\grad x_j\ra + \la \grad v_i,\grad v_j\ra\, dx. 
\end{align*}
Now observe that  
\begin{align*}
    \frac{1}{\vert B_{2R}-B_R\vert} \int_{B_{2R}-B_R} \vert \la \grad v_i,\grad v_j\ra\vert\, dx \le C R^{-3} \int_{B_{2R}-B_R} \vert \grad v_i\vert^2 + \vert \grad v_j\vert^2\, dx \to 0
\end{align*}
as $R\to \infty$. Next, since $x_i$ is harmonic in the Euclidean metric, we can integrate by parts to get 
\begin{align*}
    \frac{1}{\vert B_{2R}-B_R\vert}\left\vert \int_{B_{2R}-B_R} \la \grad x_i,\grad v_j\ra\, dx\right\vert &\le CR^{-3} \left[\int_{\bd B_{2R}} \left\vert v_j \frac{\bd x_i}{\bd \nu}\right\vert \, da + \int_{\bd B_R} \left\vert v_j \frac{\bd x_i}{\bd \nu}\right\vert\, da \right] \to 0
\end{align*}
as $R\to \infty$. The term with $\la \grad v_i,\grad x_j\ra$ can be handled similarly. Finally, combining all the estimates, it follows that $B^{ij} = \delta_{ij}$, as needed. 
\end{proof}

Finally we are in position to conclude the rigidity.   

\begin{prop}
Define a map $W\colon \R^3\to \R^3$ by $W(x) = (w_1(x),w_2(x),w_3(x))$.  The map $W$ is an isometry of metric spaces $(\R^3,d_g)\to (\R^3,d_{\operatorname{euc}})$. 
\end{prop}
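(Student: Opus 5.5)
We show that $W$ is a homeomorphism, that $W$ and $W^{-1}$ are $1$-Lipschitz for the relevant distances, and hence that $W$ is an isometry.

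\emph{Step 1: identify $\rho_y$ with $|W(\cdot)-W(y)|$.} Insert $B^{ij}=\delta_{ij}$ into \eqref{b-coeff-eqn}. This gives
\[
|\mathbf b(y)|^2+\sum_i \big[4w_i(x)-2A_i(x)\big]b_i(y)=b_0(y)
\]
for a.e.\ $x$ and every $y$. Since $\mathbf b$ ranges over an open set, the coefficients of $b_i$ must be constants, so $4w_i-2A_i\equiv c_i$. Substituting back into $\rho_0^2-\rho_y^2=b_0(y)+\sum b_i(y)w_i$ and evaluating at $x=y$, where $\rho_y(y)=0$, and at $x=0$, where $\rho_0(0)=0$, determines $b_0$ and $\mathbf b$ in terms of $W(y)$. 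We expect $\mathbf b(y)=2(W(y)-W(0))$ up to normalization. Here we use the asymptotics $w_i\sim x_i$ and $\rho_y\sim |x|$ from Proposition~\ref{prop-decay}: comparing the growth of $\rho_0^2-\rho_y^2$ at infinity, which is $2\la x,y'\ra+o(|x|)$ for some vector $y'$, fixes the linear coefficients. Completing the square should then give
\[
\rho_y(x)^2=|W(x)-W(y)|^2 \quad\text{for all }x,y,
\]
using continuity of both sides, $W$ being continuous by De Giorgi--Nash--Moser.

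\emph{Step 2: $W$ is injective and locally Lipschitz in both directions.} Injectivity is immediate, since $W(x)=W(y)$ forces $\rho_y(x)=0$, so $x=y$ because $u_y(x)<\infty$ for $x\neq y$. The two-sided bounds $C_1|x-y|^{-1}\le u_y(x)\le C_2|x-y|^{-1}$ give
\[
C_2^{-1}|x-y|\le |W(x)-W(y)|\le C_1^{-1}|x-y|.
\]
Here we need uniformity of $C_1,C_2$ in the pole. We plan to take this from Gr\"uter--Widman, where the constants depend only on the ellipticity, which is uniform since $g$ is asymptotically flat. So $W$ is bi-Lipschitz onto its image. Invariance of domain makes the image open, and the bi-Lipschitz bound makes it closed (Cauchy sequences pull back). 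Hence $W$ is a bi-Lipschitz homeomorphism of $\R^3$.

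\emph{Step 3: metric isometry.} First, $W$ is $1$-Lipschitz from $d_g$ to $d_{\rm euc}$. From $B^{ij}=\delta_{ij}$, the differential $dW$ is a $g$-isometry a.e., i.e.\ $W^*\delta=g$ a.e. For a Lipschitz curve $\gamma$, we have $L_{\rm euc}(W\circ\gamma)\le L_g(\gamma)$. This is justified by perturbing $\gamma$ within a family of parallel curves, so that a.e.\ curve meets the null set where the identity fails in measure zero, and then passing to the limit using continuity of $g$.

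For the reverse inequality, we use $\rho_y$. Since $|\grad^g\rho_y|=1$ a.e., $\rho_y$ is $1$-Lipschitz for $d_g$ by the same Fubini argument applied to $g$-geodesic approximations. Hence
\[
|W(x)-W(y)|=\rho_y(x)\le d_g(x,y).
\]
This is the same direction as before, so it does not give the reverse bound.

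Instead, we pull back the straight segment: $\sigma=W^{-1}([W(x),W(y)])$ is Lipschitz, and $W^*\delta=g$ a.e.\ should give $L_g(\sigma)=|W(x)-W(y)|$. This requires $\sigma$ to avoid the bad set. We handle it again by averaging over parallel segments $[W(x)+v,W(y)+v]$. The set $W(N)$ has measure zero since $W$ is Lipschitz. Using continuity of $d_g$, we let $v\to0$.

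We expect this Fubini-type justification, that $g=W^*\delta$ only a.e.\ still gives equality of path lengths, to be the main technical obstacle. Together with Step 1's bookkeeping, it yields $d_g(x,y)=|W(x)-W(y)|$.
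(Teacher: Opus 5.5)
\textbf{Gap in Step 1.} Step 1 carries the whole proof, and the mechanism you describe does not produce $\rho_y^2=|W-W(y)|^2$.

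After you know $4w_i-2A_i\equiv c_i$, you can substitute $b_0(y)$ as a quadratic in $\mathbf b(y)$. Evaluating $\rho_0^2-\rho_y^2=b_0(y)+\mathbf b(y)\cdot W$ at $x=y$ and at $x=0$ then gives only two scalar relations. They involve the unknown vector $\mathbf b(y)\in\R^3$, the unknown constants $c_i$, and the unknown values $\rho_0(y)^2$ and $\rho_y(0)^2$. That is not enough to determine $\mathbf b(y)$, let alone to identify it with a multiple of $W(y)-W(0)$.

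The comparison at infinity cannot close this. Proposition \ref{prop-decay} only gives $|u_y-|x|^{-1}|=O(|x|^{-1-\beta})$ with $\beta<1$, hence $\rho_0^2-\rho_y^2=O(|x|^{2-\beta})$. This is coarser than the linear term you want to read off: the $y$-dependence of $u_y$ sits at order $|x|^{-2}$, which is invisible at that precision. Your expansion ``$2\la x,y'\ra+o(|x|)$ for some $y'$'' is just $\mathbf b(y)\cdot x+o(|x|)$ restated, with $y'$ unknown, so it fixes nothing. ``We expect'' and ``completing the square should give'' are exactly where the argument is missing. Without this identity, your injectivity argument and the lower bi-Lipschitz bound in Step 2 have nothing to stand on.

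\textbf{The missing idea.} The relation $4w_i-2A_i\equiv c_i$ is really a statement about the vector field $\grad^g(\rho_0^2)$, and it must be used pointwise:
\begin{itemize}
\item Since $B^{ij}=\delta_{ij}$, the gradients $\grad^g w_1,\grad^g w_2,\grad^g w_3$ form a $g$-orthonormal frame a.e. Hence
\[
\grad^g(\rho_0^2)=\sum_i A_i\,\grad^g w_i=\sum_i \frac{4w_i-c_i}{2}\,\grad^g w_i \quad\text{a.e.}
\]
\item Combining this with the eikonal identity $|\grad^g(\rho_0^2)|^2=4\rho_0^2$ gives $4\rho_0^2=\sum_i\big(\frac{4w_i-c_i}{2}\big)^2$ a.e., and hence everywhere by continuity.
\item Evaluating at the pole, where $\rho_0(0)=0$, forces $c_i=4w_i(0)$. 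So $\rho_0=|W-W(0)|$.
\item Running the same argument with pole $y$ gives $\rho_y=|W-W(y)|$. This works because $\grad^g(\rho_y^2)=\grad^g(\rho_0^2)-\sum_i b_i(y)\grad^g w_i$, so the corresponding coefficients are again affine in $w_i$.
\end{itemize}

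\textbf{Steps 2 and 3.} Granting Step 1, these are a legitimate and more elementary alternative to the paper's route.

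The paper argues as follows:
\begin{itemize}
\item surjectivity from properness and degree one;
\item $J_W\ge 0$ via the Hencl--Koskela result;
\item Lipschitz continuity of $W^{-1}$ via Martio--V\"ais\"al\"a bounded length distortion, which yields $(DW^{-1})^TDW^{-1}=g^{-1}$ and lets the length argument be run for $W^{-1}$.
\end{itemize}

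You instead argue as follows:
\begin{itemize}
\item a direct estimate $|W(x)-W(y)|=\rho_y(x)\ge C^{-1}|x-y|$ from an upper bound $u_y(x)\le C|x-y|^{-1}$; the other inequality is just the Lipschitz bound coming from $|\grad^g w_i|=1$;
\item surjectivity because the image is both open and closed;
\item the reverse length inequality by computing $L_g$ of pulled-back parallel segments directly from $(DW)^TDW=g$ and the chain rule, after a Fubini argument shows a.e.\ such segment meets the null set $W(N)$ in measure zero.
\end{itemize}
This avoids both cited results.

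The one point you must justify is that $C$ is independent of the pole. The paper states the two-sided bounds only for a fixed pole. Uniformity does hold for the whole-space Green's function of a uniformly elliptic divergence-form operator, with the flux normalization making $u_y$ a fixed multiple of it.
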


\begin{proof}
    First we aim to show that $W$ is a homeomorphism. Since $\vert \grad^g w_i\vert^2 = 1$ almost-everywhere, the map $W$ is Lipschitz continuous. The asymptotics for $w_i$ near infinity imply that $W$ is proper and that $W$ has degree one. Thus $W$ must be surjective. 

    Next, we want to check that $W$ is injective. In addition to showing that $B$ is a constant matrix, the above calculations \eqref{b-coeff-eqn} also show that 
    \[
    4w_i(x) - 2g(\grad^g(\rho_0^2),\grad^g w_i) = c_i
    \]
    almost-everywhere for some constants $c_i$. This says that 
    \[
    2 g(\grad^g (\rho_0^2),\grad^g w_i) = 4w_i(x) -c_i
    \]
    and hence 
    \[
    \grad^g (\rho_0^2) = \sum_{i=1}^3 \left(\frac{4 w_i(x)-c_i}{2}\right)\grad^g w_i 
    \]
    almost-everywhere since $\grad^g w_i$ forms a $g$-orthonormal frame. On the other hand, we have $\vert \grad^g (\rho_0^2)\vert^2 = 4\rho_0^2$ and so 
    \[
    4\rho_0^2 = \sum_{i=1}^3 \left(\frac{4w_i(x)-c_i}{2}\right)^2
    \]
    almost-everywhere. Actually, both sides of this equation are continuous and so the equation holds everywhere. Evaluating at the origin and using $\rho_0(0) = 0$, we see that $c_i = 4w_i(0)$ and hence we obtain 
    \[
    \rho_0(x)^2 = \sum_{i=1}^3 (w_i(x) - w_i(0))^2 = \vert W(x) - W(0)\vert^2. 
    \]
    In fact, we could repeat the argument using any other point $y$ as a pole instead of $0$ and hence 
    \[
    \rho_y(x)^2 = \vert W(x) - W(y)\vert^2
    \]
    for all $x$ and $y$. This immediately implies that $W$ is injective since $\rho_y(x) = 0$ only when $x=y$. Finally, since $W$ is continuous, proper, and bijective it follows that $W$ is a homeomorphism. 

    Let $DW$ denote the Jacobian matrix of $W$ and let $J_W = \det(DW)$ be the Jacobian determinant. It is known that a Sobolev homeomorphism has $J_W \ge 0$ almost-everywhere or $J_W \le 0$ almost-everywhere \cite{hencl2010jacobians}. Since $W$ is degree one, we have $J_W \ge 0$ almost-everywhere. Moreover, note that the identity $g(\grad^g w_i,\grad^g w_j) = \delta_{ij}$ implies that
    \[
    (DW)^T(DW) = g
    \]
    almost-everywhere. Now choose $L > 0$ so that the maximum eigenvalue of $g$ at any point is at most $L^2$ and the minimum eigenvalue of $g$ at any point is at least $L^{-2}$. Then  
    it follows that 
    \[
    L^{-1} \vert \xi\vert \le \vert (DW(x))\xi \vert \le L\vert \xi\vert, \quad \xi\in \R^3
    \]
    for almost every $x$. Thus $W$ has $L$-bounded length distortion in the sense of \cite[Section 2]{martio1988elliptic}. Since $W$ is injective, it follows that $W^{-1}$ also has $L$-bounded length distortion \cite[Lemma 2.7]{martio1988elliptic}. In particular, $W^{-1}$ is  Lipschitz and $W^{-1}$ is differentiable with 
    \[
    DW^{-1}(W(x)) = (DW(x))^{-1}
    \]
    almost-everywhere. This implies that 
    \[
(DW^{-1})^T(DW^{-1}) = g^{-1}
    \]
    almost-everywhere.

    Finally, we show that $W$ is an isometry. Consider two points $x,y\in \R^3$. For any $\eps > 0$, we can find a smooth curve $\gamma\colon[0,1]\to \R^3$ connecting $x$ to $y$ such that $W$ is differentiable at almost-every point of $\gamma$ and $L_g(\gamma) \le d_g(x,y) + \eps$. Then we have 
    \begin{align*}
        L_{\text{euc}}(W\circ \gamma) = \int_0^1 \vert (W\circ \gamma)'(t)\vert\, dt &= \int_0^1 \vert DW(\gamma(t))\gamma'(t)\vert\, dt = \int_0^1 \sqrt{g(\gamma'(t),\gamma'(t)}\, dt = L_{g}(\gamma). 
    \end{align*}
    Thus we have $d_{\text{euc}}(x,y) \le L_{\text{euc}}(W\circ \gamma) = L_g(\gamma) \le d_g(x,y) + \eps$ and letting $\eps \to 0$ we deduce that 
    \[
    d_{\text{euc}}(x,y)\le d_g(x,y). 
    \]
    Applying the same argument with $W^{-1}$ in place of $W$ gives the opposite inequality $d_g(x,y) \le d_{\text{euc}}(x,y)$ and thus $W$ is an isometry. 
\end{proof}

\subsection{The Euclidean \texorpdfstring{$C^0$}{C0} Rigidity Conjecture}
Finally, we conclude this section with the application to Gromov's Euclidean $C^0$-Rigidity Conjecture. In light of Theorem \ref{main-theorem} and Theorem \ref{theorem-coordinates}, Corollary \ref{gromov-corollary} is an immediate consequence of the following proposition. 

\begin{prop} Assume that $g$ is a continuous asymptotically flat metric on $\R^3$ which satisfies $\vert g_{ij}(x) - \delta_{ij}\vert = o(\vert x\vert^{-1})$. Then for any function $\eta\in C^\infty_c((1,4))$ we have  
\begin{align*}
    &\lim_{r\to \infty}\bigg[ \frac 1 r \int_{\an(r)} \left(\frac{1}{\vert x\vert} \eta\left(\frac{\vert x\vert}{r}\right) + \frac 1 r \eta'\left(\frac{\vert x\vert}{r}\right) \right)  \delta^{ij}(g_{ij}-\delta_{ij}) \,dx  \\
&\qquad \qquad  + \frac 1 r \int_{\an(r)} \left(\frac{1}{\vert x\vert} \eta\left(\frac{\vert x\vert}{r}\right) - \frac 1 r \eta'\left(\frac{\vert x\vert}{r}\right)\right) (g_{ij}-\delta_{ij}) \frac{x^ix^j}{\vert x\vert^2}\,dx\bigg] = 0.
\end{align*} 
\end{prop}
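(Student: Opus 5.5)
This is a direct size estimate; no cancellation is needed. Write $k_{ij}=g_{ij}-\delta_{ij}$ and set
\[
\epsilon(r)=\sup_{\vert x\vert\ge r}\vert x\vert\,\vert k(x)\vert ,
\]
so that $\epsilon(r)\to 0$ as $r\to\infty$ by the hypothesis $\vert g_{ij}(x)-\delta_{ij}\vert=o(\vert x\vert^{-1})$. The plan is to bound each integrand pointwise on $\an(r)$ and then integrate over the annulus.

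First we bound the weights. Since $\eta$ and $\eta'$ are bounded, with bound $C(\eta)$, and $\vert x\vert\ge r$ on $\an(r)$, both weights satisfy
\[
\left\vert\frac{1}{\vert x\vert}\eta\left(\frac{\vert x\vert}{r}\right)\pm\frac 1 r \eta'\left(\frac{\vert x\vert}{r}\right)\right\vert\le \frac{C}{r}.
\]
Next we bound the tensor factors. We have $\vert\delta^{ij}k_{ij}\vert\le 3\vert k\vert$, and $\vert k_{ij}x^ix^j\vert/\vert x\vert^2\le \vert k\vert$. On $\an(r)$ this gives $\vert k\vert\le \epsilon(r)/\vert x\vert\le \epsilon(r)/r$.

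Multiplying these bounds, each of the two integrals in the bracket is at most
\[
\frac 1 r\cdot \frac{C}{r}\cdot\frac{\epsilon(r)}{r}\cdot\vert\an(r)\vert\le \frac{C}{r^3}\,\epsilon(r)\cdot 64\cdot\frac{4\pi}{3}r^3 = C'\epsilon(r).
\]
This tends to $0$ as $r\to\infty$, so the limit is $0$.

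There is no real obstacle here. The only point to check is that the scaling closes: the prefactor $1/r$, the weight of size $1/r$, and the decay $o(1/r)$ together contribute $o(r^{-3})$, which exactly balances the annulus volume $\sim r^3$. This scaling is also why the decay rate $O(\vert x\vert^{-1})$ is precisely the borderline case in which the mass can be nonzero.

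To deduce Corollary \ref{gromov-corollary}, note that $o(\vert x\vert^{-1})$ decay implies asymptotic flatness with any $\tau<1$, in particular some $\tau>\frac 12$. Theorem \ref{theorem-coordinates2} then gives $m(g)=0$, and the rigidity statement in Theorem \ref{main-theorem} shows that $g$ is flat.
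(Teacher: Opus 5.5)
Your proof is correct and is essentially the paper's own argument. The paper likewise bounds each weight by $C/r$ and the metric factor by $\sup_{\an(r)}\vert g_{ij}-\delta_{ij}\vert = o(r^{-1})$, so each integral is at most $Cr\cdot o(r^{-1})\to 0$; your $\epsilon(r)$ simply makes the $o(\cdot)$ bookkeeping explicit.
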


\begin{proof}
    We can estimate 
    \begin{align*}
    \left\vert\frac 1 r \int_{\an(r)} \left(\frac{1}{\vert x\vert} \eta\left(\frac{\vert x\vert}{r}\right) + \frac 1 r \eta'\left(\frac{\vert x\vert}{r}\right) \right)  \delta^{ij}(g_{ij}-\delta_{ij}) \,dx\right\vert &\le C r  \sup_{\an(r)} \vert \delta^{ij}(g_{ij}-\delta_{ij})\vert \\ &\le C r o(r^{-1})\to 0   
    \end{align*}
    as $r\to \infty$. Likewise, we have 
    \begin{align*}
        \left\vert \frac 1 r \int_{\an(r)} \left(\frac{1}{\vert x\vert} \eta\left(\frac{\vert x\vert}{r}\right) - \frac 1 r \eta'\left(\frac{\vert x\vert}{r}\right)\right) (g_{ij}-\delta_{ij}) \frac{x^ix^j}{\vert x\vert^2}\,dx\right\vert &\le C r \sup_{\an(r)} \left\vert (g_{ij}-\delta_{ij})\frac{x^ix^j}{\vert x\vert^2}\right\vert \\
        &\le C r o(r^{-1})\to 0
    \end{align*}
    as $r\to \infty$. This proves the result. 
\end{proof}

\appendix

\section{The Linearization of \texorpdfstring{$\mathcal D$}{D}}
\label{D-appendix}

Let $\mathcal G(\an)$ be the set of all continuous Riemannian metrics on the compact set $\an = \cl B(0,4) - B(0,1)$.  Define a functional 
\begin{gather*}
\mathcal D\colon \mathcal G(\an) \times W^{1,p}(\an) \to \R,\\
\mathcal D(h,f) = c_\psi + \int_{\an} \varphi(f) {\vert \grad^{h} f\vert^3} \, dv_h,
\end{gather*} 
where $\varphi(f) = \theta(1,f)$ with $\theta$ as in \eqref{eqn-theta}. 
It is straightforward to verify that $\mathcal D$ is jointly continuous in the pair $(h,f)$ provided $p > 3$.   We need to study the linearization of $\mathcal D$.  We first record a simple lemma that will be useful in the proof. 

\begin{lemma}
For any vectors $X,Y\in \R^3$ one has 
\[
\big\vert \vert X+Y\vert^3 - \vert X\vert^3 - 3 \vert X\vert \la X,Y\ra\big \vert \le C (\vert X\vert \vert Y\vert^2 + \vert Y\vert^3) 
\]
where $C$ does not depend on $X$ and $Y$. 
\end{lemma}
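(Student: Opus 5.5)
The inequality is $3$-homogeneous: replacing $(X,Y)$ by $(\lambda X,\lambda Y)$ multiplies both sides by $\lambda^3$. So the plan is a two-case argument based on the relative size of $\vert Y\vert$ and $\vert X\vert$, where the main term $3\vert X\vert\la X,Y\ra$ is the first-order Taylor term of $f(Z)=\vert Z\vert^3$ at $X$. Indeed $f$ is $C^2$ on $\R^3$ with $\grad f(Z) = 3\vert Z\vert Z$ and $\vert D^2 f(Z)\vert \le C\vert Z\vert$. The Hessian formula $D^2 f(Z) = 3\vert Z\vert I + 3 Z\otimes Z/\vert Z\vert$ holds for $Z\neq 0$, and $D^2f$ extends continuously by $0$ at the origin, so $f\in C^2$.

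\emph{Case $\vert Y\vert \ge \frac12\vert X\vert$.} Estimate each term separately:
\[
\vert X+Y\vert^3 \le (3\vert Y\vert)^3, \qquad \vert X\vert^3 \le 8\vert Y\vert^3, \qquad 3\vert X\vert\,\vert\la X,Y\ra\vert \le 3\vert X\vert^2\vert Y\vert \le 12\vert Y\vert^3 .
\]
All three terms are therefore bounded by $C\vert Y\vert^3$, which gives the claim.

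\emph{Case $\vert Y\vert < \frac12 \vert X\vert$.} Use Taylor's theorem with integral remainder along the segment $Z_s = X+sY$, $s\in[0,1]$:
\[
\vert X+Y\vert^3 - \vert X\vert^3 - 3\vert X\vert\la X,Y\ra = \int_0^1 (1-s)\, D^2 f(X+sY)[Y,Y]\, ds .
\]
On this segment $\vert X+sY\vert \le \tfrac32\vert X\vert$, so the remainder is bounded by $C\vert X\vert\,\vert Y\vert^2$.

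Combining the two cases gives the lemma with a universal constant $C$. There is no real obstacle. The only point needing care is justifying the Taylor expansion when the segment passes near the origin. This is handled by the $C^2$ regularity of $\vert Z\vert^3$ noted above. Alternatively, in the second case the segment stays in $\vert Z\vert\ge \frac12\vert X\vert>0$ (the case $X=0$ is trivial), so smoothness away from $0$ already suffices.
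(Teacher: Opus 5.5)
Your proof is correct. In the regime $\vert Y\vert\ge\frac12\vert X\vert$ each of the three terms is indeed at most a constant times $\vert Y\vert^3$. In the regime $\vert Y\vert<\frac12\vert X\vert$, the second-order Taylor remainder, combined with $\vert D^2f(Z)\vert\le 6\vert Z\vert$ and $\vert X+sY\vert\le\frac32\vert X\vert$, gives the $C\vert X\vert\vert Y\vert^2$ bound. As you note, the segment stays away from the origin in that case, so smoothness of $\vert Z\vert^3$ off $0$ already suffices.

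The paper takes a shorter route with no case split. It uses only the first-order identity
\[
\vert X+Y\vert^3-\vert X\vert^3=3\int_0^1\vert X+tY\vert\,\la X+tY,Y\ra\,dt,
\]
and compares the integrand with its value at $t=0$. That difference equals $(\vert X+tY\vert-\vert X\vert)\la X,Y\ra+t\vert X+tY\vert\,\vert Y\vert^2$. Using $\bigl\vert\vert X+tY\vert-\vert X\vert\bigr\vert\le t\vert Y\vert$, it is bounded by $\vert X\vert\vert Y\vert^2+(\vert X\vert+\vert Y\vert)\vert Y\vert^2$.

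In effect, the paper uses that the gradient $3\vert Z\vert Z$ is Lipschitz along the segment with constant of order $\vert X\vert+\vert Y\vert$. This produces both error terms $\vert X\vert\vert Y\vert^2$ and $\vert Y\vert^3$ at once. It needs no Hessian computation, no discussion of regularity at the origin, and no case distinction.

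Your version is a little longer, but it makes explicit which regime produces which error term. It is the standard two-regime template for remainder estimates of homogeneous functions. The $3$-homogeneity remark you open with is harmless but is not actually used in either case.
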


\begin{proof}
We calculate that 
\begin{align*}
\vert X + Y\vert^3 - \vert X\vert^3 &= \int_0^1 \frac{d}{dt} \la X+tY,X+tY\ra^{3/2} \, dt \\
&= 3 \int_0^1 \la X+tY,X+tY\ra^{1/2} \la X+tY,Y\ra\, dt.
\end{align*}
Then we estimate 
\begin{align*}
\int_0^1\big\vert \vert X+tY\vert \la X+tY,Y\ra - \vert X\vert \la X,Y\ra \big\vert\, dt &\le \int_0^1 \big\vert \vert X+tY\vert - \vert X\vert\big\vert \vert X\vert \vert Y\vert + t \vert X+tY\vert \vert Y\vert^2\, dt \\
&\le C \int_0^1 \vert X\vert \vert Y\vert^2 + \vert Y\vert^3\, dt\\
&= C(\vert X\vert \vert Y\vert^2 + \vert Y\vert^3). \phantom{\int} 
\end{align*}
The result follows. 
\end{proof} 

Now we proceed to show the differentiability of $\mathcal D$.   

\begin{prop}
\label{D-Frechet}
The map $\mathcal D$ is Frechet differentiable at the point $(\delta, \rho) = (g_{\operatorname{euc}},\vert y\vert^{-1})$ with 
\begin{align*}
\mathcal D'_{(\delta,\rho)} (k,v) = L(k,v) &:= \int_{\an} \varphi'(\rho)  \vert \grad \rho\vert^3 v\, dy \\
&\qquad + 3 \int_{\an} \varphi(\rho) \vert \grad \rho\vert \la \grad \rho, \grad v\ra \, dy\\
&\qquad + \int_{\an} \varphi(\rho) \left[\frac 1 2 (\tr k) \vert \grad \rho\vert^3 - \frac 3 2 \vert \grad \rho\vert k(\grad \rho,\grad \rho)\right]\, dy. 
\end{align*}
In fact, we have 
\[
\mathcal D(\delta+k,\rho+v) = \mathcal D(\delta,\rho) + L(k,v) + O(\|k\|_{C^0}^2 + \|k\|_{C^0}\|v\|_{W^{1,p}} + \|v\|_{W^{1,p}}^2)
\]
with a quadratic error estimate. 
\end{prop}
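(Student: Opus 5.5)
We write the integrand in coordinates and expand it directly. For a metric $h = \delta + k$ with $\|k\|_{C^0}$ small, we have
\[
\vert \grad^h f\vert^2 = h^{ij}\bd_i f \bd_j f
\quad\text{and}\quad
dv_h = \sqrt{\det h}\, dy .
\]
We use the pointwise matrix expansions
\[
h^{ij} = \delta^{ij} - k_{ij} + O(\vert k\vert^2),
\qquad
\sqrt{\det h} = 1 + \tfrac12 \tr k + O(\vert k\vert^2),
\]
with the $O(\cdot)$ uniform on $\an$ once $\|k\|_{C^0}$ is small. From these we get
\[
\vert \grad^h f\vert^3 = \vert \grad f\vert^3 - \tfrac32 \vert \grad f\vert\, k(\grad f,\grad f) + O(\vert k\vert^2 \vert \grad f\vert^3),
\]
by writing $\vert \grad^h f\vert^3 = (\vert\grad f\vert^2 - k(\grad f,\grad f) + O(|k|^2|\grad f|^2))^{3/2}$ and Taylor expanding $s\mapsto s^{3/2}$. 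This is legitimate because the perturbation is at most $C|k|\,|\grad f|^2$, so it is homogeneous. Multiplying by the volume factor,
\[
\vert\grad^h f\vert^3\, dv_h = \Big[\vert\grad f\vert^3 + \tfrac12(\tr k)\vert\grad f\vert^3 - \tfrac32\vert\grad f\vert k(\grad f,\grad f) + O(|k|^2|\grad f|^3)\Big]dy .
\]

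Next we set $f=\rho+v$ and expand in $v$. By the preceding Lemma with $X=\grad\rho$ and $Y=\grad v$,
\[
\vert\grad f\vert^3 = \vert\grad\rho\vert^3 + 3\vert\grad\rho\vert\la\grad\rho,\grad v\ra + O(\vert\grad v\vert^2 + \vert\grad v\vert^3),
\]
using that $\vert\grad\rho\vert$ is bounded on $\an$. Since $p>3$, Morrey's embedding gives $\|v\|_{C^0}\le C\|v\|_{W^{1,p}}$. We also note $\rho\in[1/4,1]$ on $\an$, and $\varphi$ is smooth on $(0,\infty)$, being a rescaling of $\psi$ times $t^{-3}$. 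Hence for $\|v\|_{W^{1,p}}$ small, $\rho+v$ stays in a compact subset of $(0,\infty)$, and Taylor's theorem gives
\[
\varphi(\rho+v) = \varphi(\rho) + \varphi'(\rho)v + O(v^2)
\]
uniformly.

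We then multiply the three expansions and collect terms. The zeroth order term is $\mathcal D(\delta,\rho)$. The first order terms are exactly the three lines of $L(k,v)$: the $\varphi'(\rho)v$ term, the $3\varphi(\rho)|\grad\rho|\la\grad\rho,\grad v\ra$ term, and the metric terms at $f=\rho$.

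Every remaining term is a product containing at least two small factors. These factors are drawn from $|k|$, $|v|$ and $|\grad v|$. They may also be multiplied by the factor $|\grad\rho|^a|\grad v|^b$ with $a+b\le 3$, or by the harmless factor $|\grad f|^3\le C(1+|\grad v|^3)$. We integrate over $\an$ and bound each such term by Hölder's inequality. The quantities $|k|$ and $|v|$ are bounded in sup norm by $\|k\|_{C^0}$ and $C\|v\|_{W^{1,p}}$. Powers of $|\grad v|$ up to $3$ are controlled because $p>3$ on the bounded set $\an$, which gives $\int|\grad v|^q\le C\|v\|_{W^{1,p}}^q$ for $q\le 3$. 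Every term is then bounded by a monomial in $\|k\|_{C^0}$ and $\|v\|_{W^{1,p}}$ of total degree at least $2$. Any monomial of degree higher than $2$ is absorbed into the quadratic bound
\[
O(\|k\|_{C^0}^2 + \|k\|_{C^0}\|v\|_{W^{1,p}} + \|v\|_{W^{1,p}}^2),
\]
since both norms are small. As the error is quadratic, $L$ is the Fréchet derivative.

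The main obstacle is bookkeeping rather than any real difficulty. We must check that the cross terms, for instance $(\tr k)\cdot 3|\grad\rho|\la\grad\rho,\grad v\ra$ and $k(\grad f,\grad f)$ with $f=\rho+v$, each really carry two small factors. We must also check that no term requires more than $L^3$ integrability of $\grad v$; this is exactly where the hypothesis $p>3$ is used.
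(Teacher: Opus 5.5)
Your proposal is correct and follows essentially the paper's route. You Taylor expand $\varphi$ and $|\nabla f|^3$ in $v$ (via the Lemma), expand the metric dependence in $k$ using homogeneity in $\nabla f$, and control all cross terms by Morrey's embedding and H\"older with $p>3$; the only cosmetic differences are that you get the $k$-expansion from explicit expansions of $h^{ij}$ and $\sqrt{\det h}$ instead of Taylor expanding $\Psi(H,\xi)=\langle H^{-1}\xi,\xi\rangle^{3/2}\sqrt{\det H}$ over $\xi\in S^2$, and you collect the cross terms in one pass instead of the paper's separate Step 3 comparing $L_2(k;\rho+v)$ with $L_2(k;\rho)$.
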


\begin{proof}
Note that any symmetric 2-tensor sufficiently close to $\delta$ in $C^0$ will be uniformly positive definite and that any function $f$ sufficiently close to $\rho$ in $W^{1,p}$ will satisfy $f \ge c > 0$ for a constant $c$ that does not depend on $f$.  
We proceed in several steps. 

{\bf Step 1.} First, we freeze the metric at the Euclidean metric and prove the Frechet differentiability of the functional 
\[
J(f) = \mathcal D(\delta,f) = c_\psi + \int_{\an} \varphi(f) \vert \grad f\vert^3\, dy. 
\]
at $f = \rho$.  We claim that 
\[
J'_{\rho}(v) = L_1(v) := \int_{\an} \varphi'(\rho)  \vert \grad \rho\vert^3 v\, dy 
+ 3 \int_{\an} \varphi(\rho) \vert \grad \rho\vert \la \grad \rho, \grad v\ra \, dy.
\]
We are going to show that 
\[
J(\rho+v) - J(\rho) - L_1(v) = O(\|v\|_{W^{1,p}}^2). 
\]
To this end, we observe that there is a uniform estimate 
\[
\varphi(\rho + v) = \varphi(\rho) + \varphi'(\rho)v + O(\|v\|_{C^0}^2)
\]
once $\|v\|_{C^0}$ is sufficiently small. Also, by the previous lemma, there is a uniform estimate 
\[
\vert \grad \rho + \grad v\vert^3 = \vert \grad \rho\vert^3 + 3 \vert \grad \rho\vert \la \grad \rho, \grad v\ra + O(\vert \grad \rho\vert \vert \grad v\vert^2 + \vert \grad v\vert^3). 
\]
It follows that 
\begin{align*}
J(\rho+v)-J(\rho) &= \int_{\an} \varphi'(\rho) \vert \grad \rho\vert^3 v + 3\varphi(\rho)\vert \grad \rho\vert \la \grad \rho,\grad v\ra\, dy \\
&\qquad + \int_{\an} 3\varphi'(\rho)\vert \grad \rho\vert v \la \grad \rho,\grad v\ra + (\vert \grad \rho\vert^3 + 3\vert \grad \rho\vert \la \grad \rho,\grad v\ra)O(\|v\|_{C^0}^2)\, dy \\
&\qquad + \int_{\an} (\varphi(\rho) + \varphi'(\rho)v + O(\|v\|_{C^0}^2)) O(\vert \grad \rho\vert \vert \grad v\vert^2 + \vert \grad v\vert^3)\, dy. 
\end{align*}
It is easy to see that the remainder terms in the second and third lines are in fact uniformly $O(\|v\|_{W^{1,p}}^2)$. This proves the differentiability of $J$. 

{\bf Step 2.} Next, define 
\[
L_2(k;f) =  \int_{\an} \varphi(f) \left[\frac 1 2 (\tr k) \vert \grad f\vert^3 - \frac 3 2 \vert \grad f\vert k(\grad f,\grad f)\right]\, dy. 
\]
We claim that 
\[
\mathcal D(\delta + k, \rho+v) - \mathcal D(\delta,\rho+v) - L_2(k,\rho + v) = O(\|k\|_{C^0}^2). 
\]
Define the function 
\[
\Psi(H,\xi) = \la H^{-1}\xi,\xi\ra^{3/2} \sqrt{\det H}
\]
where $H$ is a positive definite symmetric matrix and $\xi\in S^2$.  This is a smooth function of $(H,\xi)$ for $H$ in a neighborhood of the identity matrix and $\xi \in S^2$.  By Taylor expansion, for a symmetric matrix $K$ close to 0, we have  
\[
\Psi(I+K,\xi) = \Psi(I,\xi) + \Psi'_{(I,\xi)}(K,0) + O(\|K\|^2)
\]
where the constant in $O(\|K\|^2)$ does not depend on $\xi$. Now since $\Psi$ is homogeneous in $\xi$, we obtain 
\[
\Psi(I+K,\xi) = \Psi(I,\xi) + \Psi'_{(I,\xi)}(K,0) + \vert \xi\vert^3 O(\|K\|^2)
\]
for any $\xi \neq 0$ where again the constant in $O(\|K\|^2)$ doesn't depend on $\xi$. Finally, we note that this equation is also valid for $\xi = 0$ since then every term is equal to $0$.  

We compute that 
\[
\Psi'_{(I,\xi)}(K,0) =  \frac 1 2 \vert \xi\vert^3 \tr(K) - \frac 3 2 \vert \xi\vert \la K\xi,\xi\ra.
\]
Now observe that 
\begin{align*}
&\mathcal D(\delta+k,\rho+v)-\mathcal D(\delta,\rho+v) \\
&\qquad = \int_{\an} \varphi(\rho+v) [\Psi(\delta+k,\grad \rho + \grad v) - \Psi(\delta,\grad \rho + \grad v)]\, dy\\
&\qquad = \int_{\an} \varphi(\rho+v) \left[ \frac 1 2 \vert \grad \rho + \grad v\vert^3 \tr(k) - \frac 3 2 \vert \grad \rho + \grad v\vert  k(\grad \rho + \grad v,\grad \rho + \grad v)\right]\, dy  \\
&\qquad \qquad \qquad   + \int_{\an} \varphi(\rho+v)\left[ \vert \grad \rho + \grad v\vert^3 O(\|k\|_{C^0}^2)\right]\, dy\\
& \qquad = L_2(k;\rho+v) + C \|k\|_{C^0}^2
\end{align*}
where we used the fact that 
\[
\int_{\an} \vert \grad \rho + \grad v\vert^3 \, dy \le \int_{\an} \vert \grad \rho\vert^3 + 3 \vert \grad \rho\vert^2 \vert \grad v\vert + 3 \vert \grad \rho\vert \vert \grad v\vert^2 + \vert \grad v\vert^3\, dy \le C. 
\]
This completes Step 2.  

{\bf Step 3:} Combining Steps 1 and 2, we deduce that 
\[
\mathcal D(\delta+k,\rho+v)-\mathcal D(\delta,\rho) = L_1(v) + L_2(k;\rho+v) + O(\|v\|_{W^{1,p}}^2 + \|k\|_{C^0}^2). 
\]
To complete the proof we need to show that 
\[
L_2(k;\rho+v) = L_2(k,\rho) + O(\|k\|_{C^0}^2 + \|k\|_{C^0}\|v\|_{W^{1,p}} + \|v\|_{W^{1,p}}^2).
\] 
We now verify this estimate.  
We compute that 
\begin{align*}
&\vert L_2(k,\rho+v) - L_2(k,\rho)\vert\\
&\qquad \le C \|k\|_{C^0} \int_{\an} \left\vert \varphi(\rho+v) \vert \grad \rho + \grad v\vert^3 - \varphi(\rho)\vert \grad \rho\vert^3\right\vert \,dy \\
&\qquad \quad + C \int_{\an} \left\vert \varphi(\rho+v) \vert \grad \rho + \grad v\vert  k(\grad \rho + \grad v,\grad \rho+\grad v) - \varphi(\rho)\vert \grad \rho\vert k(\grad \rho,\grad \rho)\right\vert\, dy.
\end{align*} 
Let us estimate the two integrals separately. For the first, observe that 
\begin{align*}
&\left \vert \varphi(\rho+v) \vert \grad \rho + \grad v\vert^3 - \varphi(\rho)\vert \grad \rho\vert^3\right\vert \\
&\qquad \le \vert \varphi(\rho+v) - \varphi(\rho)\vert \vert \grad \rho + \grad v\vert^3 + \vert \varphi(\rho)\vert \left\vert \vert \grad \rho+\grad v\vert^3 - \vert \grad \rho\vert^3\right\vert\\
&\qquad \le C \|v\|_{C^0}(1 + \vert \grad v\vert + \vert \grad v\vert^2 + \vert \grad v\vert^3) + C (1+\vert \grad v\vert)^2\vert \grad v\vert
\end{align*}
which yields 
\[
C \|k\|_{C^0}  \int_{\an} \left\vert \varphi(\rho+v) \vert \grad \rho + \grad v\vert^3 - \varphi(\rho)\vert \grad \rho\vert^3\right\vert \,dy = O(\|k\|_{C^0}^2 + \|k\|_{C^0}\|v\|_{W^{1,p}} + \|v\|_{W^{1,p}}^2)
\]
upon integrating. Now consider the second integral.  Again we use the triangle inequality to get 
\begin{align*}
&\left\vert \varphi(\rho+v) \vert \grad \rho + \grad v\vert  k(\grad \rho + \grad v,\grad \rho+\grad v) - \varphi(\rho)\vert \grad \rho\vert k(\grad \rho,\grad \rho)\right\vert\\
&\qquad \le \vert \varphi(\rho+v)-\varphi(\rho)\vert \vert \grad \rho + \grad v\vert \vert k(\grad \rho + \grad v,\grad \rho+\grad v) \vert \\
&\qquad \qquad + \vert \varphi(\rho)\vert \vert \grad \rho + \grad v\vert - \vert \grad \rho\vert\vert \vert k(\grad \rho + \grad v,\grad \rho+\grad v)\vert \\
&\qquad \qquad + \vert \varphi(\rho)\vert \vert \grad \rho\vert \vert k(\grad \rho+\grad v,\grad \rho+\grad v)-k(\grad \rho,\grad \rho)\vert. 
\end{align*} 
The term on the second line is bounded by 
\[
C \|v\|_{C^0} \|k\|_{C^0} (1+\vert \grad v\vert)^3,
\]
and the term on the third line is bounded by 
\[
C \vert \grad v\vert \|k\|_{C^0} (1+\vert \grad v\vert)^2. 
\]
Finally, the term on the fourth line is bounded by 
\[
C \vert 2k(\grad \rho,\grad v) + k(\grad v,\grad v)\vert \le C \|k\|_{C^0} \vert \grad v\vert + C\|k\|_{C^0} \vert \grad v\vert^2. 
\]
Combining these estimates and integrating shows that 
\begin{align*}
&\int_{\an} \left\vert \varphi(\rho+v) \vert \grad \rho + \grad v\vert  k(\grad \rho + \grad v,\grad \rho+\grad v) - \varphi(\rho)\vert \grad \rho\vert k(\grad \rho,\grad \rho)\right\vert\, dy\\
&\qquad  = O(\|k\|_{C^0}^2 + \|k\|_{C^0}\|v\|_{W^{1,p}} + \|v\|_{W^{1,p}}^2). 
\end{align*} 
Thus we have $L_2(k;\rho+v) = L_2(k;\rho) + O(\|k\|_{C^0}^2 + \|k\|_{C^0}\|v\|_{W^{1,p}} + \|v\|_{W^{1,p}}^2)$, as desired.  Hence we have now demonstrated that $\mathcal D$ is differentiable with derivative $\mathcal D'_{(\delta,\rho)}(k,v) = L_1(v) + L_2(k;\rho)$ and a quadratic error estimate. 
\end{proof}

\section{The \texorpdfstring{$F$}{F} Function}

\label{Appendix-F}

In this appendix, we summarize some results about the $F$ function introduced by Agostiniani, Mazzieri, and Oronzio in \cite{agostiniani2024green}. Let $g$ be a smooth metric on $\R^3$ and let $u$ be a positive harmonic function on some region with closed, connected level sets.  Further assume that $u$ is scaled so that the flux over each level set of $u$ is $4\pi$.   

\begin{defn}
For regular values $t > 0$ of $u$, the $F$ function is defined by 
\[
F(t) = 4\pi t - t^2 \int_{\{u=\frac 1 t\}} H \vert \grad u\vert\, da + t^3 \int_{\{u=\frac 1 t\}} \vert \grad u\vert^2\, da,
\]
where all the geometric quantities are computed with respect to the $g$ metric. 
\end{defn}

\begin{rem}
The function $u$ above corresponds to the function $1 - u$ in the original notation of \cite{agostiniani2024green}. We hope this will not cause confusion. Also, our convention is that 
\[
H = -\div\left(\frac{\grad u}{\vert \grad u\vert}\right)
\]
so that $H$ is positive when $u = \vert x\vert^{-1}$ on Euclidean space. 
\end{rem} 

One has the following monotonicity theorem for the $F$ function \cite{agostiniani2024green}.  

\begin{prop}
\label{F-derivative}
The function $F$ is absolutely continuous with 
\[
F'(t) = 4\pi + \int_{\{u=\frac 1 t\}} -\frac{R^{\Sigma_t}}{2} + \frac{\vert \grad^{\Sigma_t} \vert \grad u\vert \vert^2}{\vert \grad u\vert^2} + \frac{R}{2} + \frac{\vert \mathring A\vert^2}{2} + \frac{3}{4}\left(\frac{2\vert \grad u\vert}{u}-H\right)^2\, da
\]
for almost every $t$. Here we have abbreviated $\Sigma_t = \{u = \frac 1 t\}$. 
\end{prop}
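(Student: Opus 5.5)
The plan is to reproduce the Agostiniani--Mazzieri--Oronzio computation in our normalization, where $u$ plays the role of $1-u$ in their notation. The ingredients are the first variation of $H$ and of $|\grad u|$ along the level-set flow, combined with the Bochner formula and the Gauss equation on $\Sigma_t$. Regularity comes first. Since $u$ is harmonic with closed, connected level sets, the critical set of $u$ has measure zero and its image is a closed null set of critical values. On any interval of regular values, $F$ is smooth, because the level sets vary smoothly. To get absolute continuity across critical values, I would write $F$ through the co-area formula as an integral over sublevel regions $\{1/t_2<u<1/t_1\}$, using the divergence theorem. Concretely, $\int_{\Sigma_t} H|\grad u|\,da$ and $\int_{\Sigma_t}|\grad u|^2\,da$ are boundary fluxes of the vector fields $\grad|\grad u|$ and $|\grad u|\grad u$, up to sign. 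Their divergences are $L^1_{\text{loc}}$ near critical points: $|\grad u|^{-1}|\nabla^2u|^2$ is integrable in dimension $3$, for example via Kato-type estimates, as in AMO. This shows $F(t_2)-F(t_1)=\int_{t_1}^{t_2}F'$.

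Next comes the derivative computation at regular values. Let $s=1/t$ and move $\Sigma$ with normal speed $1/|\grad u|$, so that $\frac{d}{dt}=-t^{-2}\frac{d}{ds}$. I would use the standard identities $\frac{d}{ds}\int_{\Sigma}|\grad u|^2\,da = -\int_\Sigma H|\grad u|\,da$ (with our sign convention for $H$), and the evolution $\partial_s H = \Delta_\Sigma(|\grad u|^{-1}) + (|A|^2+\mathrm{Ric}(\nu,\nu))|\grad u|^{-1}$ up to orientation sign. I would also use $H|\grad u| = \partial_\nu|\grad u|$, which holds because $u$ is harmonic. Integrating by parts on $\Sigma$ turns the $\Delta_\Sigma$ term into $\int|\grad^\Sigma|\grad u||^2/|\grad u|^2$. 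The Gauss equation $2\mathrm{Ric}(\nu,\nu)=R-R^\Sigma+H^2-|A|^2$, together with $|A|^2=|\mathring A|^2+H^2/2$, then gives the terms $R/2-R^\Sigma/2+|\mathring A|^2/2$. The flux normalization $\int_\Sigma|\grad u|=4\pi$ lets me absorb the constant $4\pi$ and complete the square. The $t$ and $t^2$ weights combine the leftover $H^2$, $H|\grad u|/u$ and $|\grad u|^2/u^2$ terms into $\frac34\big(\frac{2|\grad u|}{u}-H\big)^2$. I would check this carefully by the bookkeeping $t=1/u$ on $\Sigma_t$.

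The main obstacle is the absolute continuity across critical levels. The pointwise computation is routine once the signs are fixed. Connectedness of level sets is only needed later, to apply Gauss--Bonnet, $\int R^\Sigma\le 8\pi$. For the regularity issue, I would cite the analysis of \cite{agostiniani2024green} directly rather than reprove it: they show that $F$ is locally absolutely continuous and that the formula holds almost everywhere, via an approximation by $|\grad u|_\eps=\sqrt{|\grad u|^2+\eps}$. Translating their statement to our variable $u\leftrightarrow 1-u$ yields the proposition.
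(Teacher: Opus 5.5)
Your proposal is correct and follows essentially the paper's route. The paper gives no independent argument: it quotes absolute continuity and the derivative formula from \cite{agostiniani2024green} with $u$ in place of their $1-u$, which is exactly where you end up. Your sketched regular-value computation (Bochner, the Gauss equation, and $|A|^2=|\mathring A|^2+H^2/2$) also reproduces the stated formula.

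There are two small slips to fix if you write the computation out.

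First, in the variable $s=1/t$, with this sign convention for $H$, one has
\[
\frac{d}{ds}\int_{\{u=s\}}|\nabla u|^2\,da=+\int_{\{u=s\}}H|\nabla u|\,da.
\]
This is what gives the factor $-t^{-2}$ in Proposition~\ref{ac-gradient}. It also gives the cross term $-3t\int H|\nabla u|\,da$ that you need to complete the square; with your sign the square would not close.

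Second, the constant $4\pi$ comes directly from differentiating $4\pi t$. Completing the square uses only $u=1/t$ on $\Sigma_t$, so the flux normalization plays no role in this formula.
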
 

This has the following corollaries. 

\begin{corollary}
Assume that $g$ has non-negative scalar curvature.  Then $F(t)$ is a non-decreasing function of $t$. 
\end{corollary}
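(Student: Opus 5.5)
The corollary will follow by integrating the derivative formula in Proposition \ref{F-derivative}. The only term in the integrand that is not obviously signed is $-\frac{R^{\Sigma_t}}{2}$, so the plan is to control it with Gauss--Bonnet and use the hypothesis $R \ge 0$ for the ambient scalar curvature term.

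First, I will drop the manifestly non-negative terms. For almost every $t$, Proposition \ref{F-derivative} gives
\[
F'(t) \ge 4\pi - \frac 1 2 \int_{\Sigma_t} R^{\Sigma_t}\, da + \int_{\Sigma_t} \frac{R}{2}\, da,
\]
since the terms $\frac{\vert \grad^{\Sigma_t} \vert \grad u\vert\vert^2}{\vert \grad u\vert^2}$, $\frac{\vert \mathring A\vert^2}{2}$ and $\frac34\left(\frac{2\vert\grad u\vert}{u} - H\right)^2$ are all non-negative. The assumption $R \ge 0$ makes the last integral non-negative.

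Second, I will handle the intrinsic curvature term. For regular values $1/t$, the level set $\Sigma_t$ is a smooth closed surface, and by the standing assumption of this appendix it is connected. Since $R^{\Sigma_t} = 2K$, Gauss--Bonnet gives
\[
\frac 1 2 \int_{\Sigma_t} R^{\Sigma_t}\, da = \int_{\Sigma_t} K\, da = 2\pi\chi(\Sigma_t) \le 4\pi,
\]
because a connected closed orientable surface has Euler characteristic at most $2$. Hence $F'(t) \ge 0$ for almost every $t$. By Sard's theorem almost every value is regular, so this holds off a null set of $t$.

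Third, I will conclude monotonicity. Since $F$ is absolutely continuous (Proposition \ref{F-derivative}), we have $F(t_2) - F(t_1) = \int_{t_1}^{t_2} F'(t)\,dt \ge 0$.

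The main obstacle is justifying the use of Gauss--Bonnet on level sets. This requires the level sets to be connected and the integrand to be evaluated only at regular values. Both are covered: connectedness is built into the hypotheses here (and was verified in the proof of almost-monotonicity in Section \ref{section-monotonicity}), and the null set of critical values is harmless because $F$ is absolutely continuous.
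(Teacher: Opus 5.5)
Your proposal is correct and follows the paper's proof: you drop the non-negative terms in the formula for $F'(t)$ from Proposition \ref{F-derivative}, use $R\ge 0$, and bound $\frac12\int_{\Sigma_t}R^{\Sigma_t}\,da = 2\pi\chi(\Sigma_t)\le 4\pi$ by Gauss--Bonnet on the connected level sets. You then integrate using absolute continuity, which is exactly the argument the paper gives in one line.
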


\begin{proof}
This follows from the formula for $F'(t)$ together with the Gauss-Bonnet theorem since the level sets of $u$ are connected. 
\end{proof}

Finally, we note that the term involving the gradient squared is absolutely continuous on its own. 

\begin{prop}
\label{ac-gradient}
    The function $t\mapsto \int_{\{u=\frac 1 t\}} \vert \grad u\vert^2\, da$ is absolutely continuous with 
    \[
\frac{d}{dt} \int_{\{u=\frac 1 t\}} \vert \grad u\vert^2\, da = -t^{-2}\int_{\{u=\frac 1 t\}} H \vert \grad u\vert\, da
\]
    almost everywhere. 
\end{prop}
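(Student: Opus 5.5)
The statement to prove is Proposition~\ref{ac-gradient}: for smooth $g$ and a positive harmonic $u$ with closed connected level sets and flux $4\pi$, the map $G(t)=\int_{\{u=1/t\}}\vert\grad u\vert^2\,da$ is absolutely continuous and satisfies $G'(t)=-t^{-2}\int_{\{u=1/t\}}H\vert\grad u\vert\,da$ almost everywhere. I would first prove the identity between regular values using the divergence theorem, and then obtain absolute continuity from an integrability bound.

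Let $1/t_2<1/t_1$ with $t_1<t_2$, and let $U=\{1/t_2<u<1/t_1\}$. Consider the vector field $X=\vert\grad u\vert\grad u$. Since $\lap u=0$,
\[
\div X=\la\grad\vert\grad u\vert,\grad u\ra .
\]
With the convention $H=-\div(\grad u/\vert\grad u\vert)$, a direct computation on $\{\grad u\neq 0\}$ gives
\[
\la\grad\vert\grad u\vert,\grad u\ra = H\vert\grad u\vert^2 .
\]
To see this, expand $0=\lap u=\div(\vert\grad u\vert\nu)$ with $\nu=\grad u/\vert\grad u\vert$. This yields $\la\grad\vert\grad u\vert,\nu\ra=H\vert\grad u\vert$.

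On the level set $\{u=1/t\}$ the outward normal of $U$ on the outer boundary $\{u=1/t_2\}$ is $-\nu$. On the inner boundary $\{u=1/t_1\}$ it is $+\nu$, since $u$ decreases outward. Hence the boundary contributions are $\pm\int\vert\grad u\vert^2\,da$. The divergence theorem then gives
\[
G(t_1)-G(t_2)=\int_U H\vert\grad u\vert^2\,dv .
\]
By the coarea formula with $s=1/t$, so that $ds=t^{-2}\,dt$, the right side equals
\[
\int_{t_1}^{t_2} t^{-2}\int_{\{u=1/t\}}H\vert\grad u\vert\,da\,dt .
\]
Therefore
\[
G(t_2)-G(t_1)=-\int_{t_1}^{t_2}t^{-2}\int_{\{u=1/t\}}H\vert\grad u\vert\,da\,dt .
\]
This gives the claimed derivative formula as soon as the integrand is in $L^1$ and the identity is known for all $t_1,t_2$.

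The main obstacle is critical points of $u$, where $\nu$ and $H$ are undefined. Two facts handle it.

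First, the quantity $H\vert\grad u\vert^2=\la\grad\vert\grad u\vert,\grad u\ra$ is bounded by $\vert\nabla^2u\vert\,\vert\grad u\vert$. This is a locally bounded function, so $\div X$ is in $L^\infty_{\mathrm{loc}}$. In fact $X$ is Lipschitz, so the divergence theorem applies on $U$ with $\div X$ understood a.e. On the set $\{\grad u=0\}$ we have $\div X=0$ a.e., by the standard fact that the gradient of a Lipschitz function vanishes a.e. on its zero set.

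Second, the coarea reformulation only involves $\{\grad u\neq0\}$. Its integrand $t^{-2}\int H\vert\grad u\vert\,da$ lies in $L^1_{\mathrm{loc}}$ because $\int_U\vert H\vert\,\vert\grad u\vert^2\,dv<\infty$.

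It remains to handle the endpoints. $G$ is continuous in $t$: it equals $\int_{\{u>1/t\}}\div X$ up to a constant, again by the divergence theorem, using that the level sets are closed. So the identity extends from regular values, which have full measure by Sard's theorem, to all $t_1<t_2$. The identity then exhibits $G$ as the indefinite integral of an $L^1_{\mathrm{loc}}$ function, which gives both absolute continuity and the a.e.\ derivative formula.
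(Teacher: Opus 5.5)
Your computation between regular values is correct, and it is the standard argument; the paper states this proposition without proof and defers to \cite{agostiniani2024green}. The identity $\la\nabla|\nabla u|,\nu\ra=H|\nabla u|$, the orientation of the two boundary components of $U$, the Lipschitz regularity of $X=|\nabla u|\nabla u$, and the $L^1_{\mathrm{loc}}$ bound on $t\mapsto t^{-2}\int_{\{u=1/t\}}H|\nabla u|\,da$ from the coarea formula all check out. One slip: you wrote $ds=t^{-2}\,dt$ where it should be $ds=-t^{-2}\,dt$, but reversing the limits compensates, so the final formula is right.

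The step that does not go through as written is the extension to critical values. If $1/t_0$ is a critical value, then $\{u>1/t_0\}$ need not be a smooth or Lipschitz domain. The fact that $\{u=1/t_0\}$ is closed does not give a Gauss--Green formula whose boundary term is $\int_{\{u=1/t_0\}}|\nabla u|^2\,d\mathcal H^2$, and that identity is exactly what must be proved.

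What you have actually shown is narrower. The restriction of $G$ to regular values is uniformly continuous, so it agrees off a null set with an absolutely continuous function $\tilde G$. You still need $\tilde G(t_0)=G(t_0)$ at every critical $t_0$. Here is one elementary way to get it.
\begin{itemize}
\item \emph{Lower bound.} For a compact $K\subset\{u=1/t_0\}\setminus\{\nabla u=0\}$, nearby level sets are smooth graphs over $K$ by the implicit function theorem. Hence $\int_K|\nabla u|^2\,da\le\liminf_{t\to t_0}G(t)$, and exhausting gives $G(t_0)\le\tilde G(t_0)$.
\item \emph{Upper bound.} For regular $t$ near $t_0$, split $\{u=1/t\}$ into two parts.
\begin{itemize}
\item On the part where $|\nabla u|>\epsilon$: it lies in finitely many flow-box charts covering the compact set $\{u=1/t_0,\ |\nabla u|\ge\epsilon\}$. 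So its contribution has $\limsup$ at most $\int_{\{u=1/t_0\}}|\nabla u|^2\,da=G(t_0)$.
\item On the part where $|\nabla u|\le\epsilon$: its contribution is at most $\epsilon\int_{\{u=1/t\}}|\nabla u|\,da=4\pi\epsilon$, by the flux normalization.
\end{itemize}
Together these give $\tilde G(t_0)\le G(t_0)+4\pi\epsilon$ for every $\epsilon>0$.
\end{itemize}

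Alternatively, you could justify your Gauss--Green step through finite perimeter of $\{u>1/t_0\}$. That route needs structure results for harmonic functions: the critical set is $\mathcal H^2$-negligible and level sets have locally finite area. This is nontrivial input that you have not invoked.

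For the paper's own use of this proposition, in Proposition~\ref{DvsF}, $G$ only appears under $dt$-integrals, so working with $\tilde G$ would suffice there.
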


\section{Estimates for the Laplace Equation} 
\label{Appendix-Mapping}

The goal of this appendix is to prove estimates for the weak solution of $\lap w = \div(X)$ on $\R^3$ in various weighted spaces. The following discussion is largely based on \cite{bartnik1986mass} and  \cite{farwig1997weighted}. Define the weight 
\[
\sigma(x) = 1+\vert x\vert. 
\]
Following \cite{bartnik1986mass}, for a real number $\delta$, define the weighted $L^q$ and Sobolev norms 
\[
\|f\|_{L^q_{\delta}}^q = \int_{\R^3} \vert f\vert^q \sigma^{-q\delta-3}\, dx,\quad 
\|f\|_{W^{1,q}_\delta}^q = \|f\|_{L^q_{\delta}}^q + \|\grad f\|_{L^q_{\delta-1}}^q.
\]
Define the homogeneous Sobolev space 
    \[
    \dot W^{1,q}_\delta(\R^3) = \{u\in W^{1,1}_{\text{loc}}(\R^3): \|\grad u\|_{L^q_{\delta-1}} < \infty\}. 
    \]
    This becomes a separable Banach space after taking a quotient by the constant functions. 
    For a given $1 < q < \infty$, the function $
\sigma(x)^{-q(\delta-1) - 3}
$ is a Muckenhoupt $A_q$ weight for 
\[
-3 < -q(\delta-1) -3 < 3(q-1);
\] 
see \cite[Page 252]{farwig1997weighted}.
This is equivalent to 
\[
-2 < \delta < 1.
\]
We are going to need the following mapping properties of the Laplacian. 

\begin{rem}
    The restriction to the range $-2<\delta<1$ in the following propositions can intuitively be understood by the fact that the linear harmonic functions $x_i$ belong to $W^{1,q}_\delta$ for $\delta > 1$ and to the dual of $W^{1,q}_\delta$ for $\delta < -2$. 
\end{rem}

\begin{prop}
\label{weight-estimate-I}
    Fix some $-2 < \delta< 0$ and some $1 < q < \infty$. Assume that $X$ is a vector field in $L^q_{\delta-1}(\R^3)$.  Then there is a unique weak solution to \[
    \lap w = \div X, \quad w\in W^{1,q}_{\operatorname{loc}}(\R^3)
    \]
satisfying the estimate 
    \[
    \|w\|_{W^{1,q}_\delta} \le C \|X\|_{L^q_{\delta-1}}
    \]
    where $C$ does not depend on $X$. 
\end{prop}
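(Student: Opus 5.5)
We will write the solution explicitly as $w = \grad\Gamma * X$ componentwise, i.e.
\[
w(x) = -\int_{\R^3} \grad_y \Gamma(x-y)\cdot X(y)\,dy = \sum_j \partial_j \Gamma * X^j, \qquad \Gamma(x) = -\frac{1}{4\pi\vert x\vert},
\]
so that formally $\lap w = \div X$, and then estimate $\grad w$ and $w$ separately in the weighted norms. The gradient is the easier half. We have $\partial_i w = \sum_j \partial_i\partial_j \Gamma * X^j$ plus a constant multiple of $X^i$ (the delta part of the second derivative of $\Gamma$), so $\grad w = \mathcal R X$ where $\mathcal R$ is a matrix of Calder\'on--Zygmund operators (products of Riesz transforms). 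Since $\sigma^{-q(\delta-1)-3}$ is a Muckenhoupt $A_q$ weight exactly when $-2<\delta<1$, as recorded above, the weighted boundedness of Calder\'on--Zygmund operators gives
\[
\|\grad w\|_{L^q_{\delta-1}} \le C\|X\|_{L^q_{\delta-1}}.
\]
To make sense of the convolution we will first take $X\in C^\infty_c$, prove all estimates there, and then extend by density, since $C^\infty_c$ is dense in the weighted $L^q$ space.

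The main obstacle is the zeroth-order bound $\|w\|_{L^q_\delta}\le C\|X\|_{L^q_{\delta-1}}$. This is where $\delta<0$ enters: it ensures that $w$ decays and that no constant needs to be subtracted. We will use the pointwise bound
\[
\vert w(x)\vert \le C\int \frac{\vert X(y)\vert}{\vert x-y\vert^2}\,dy,
\]
and split the integral into three regions.
\begin{itemize}
\item[(i)] On $\vert y\vert<\vert x\vert/2$, the kernel is $\lesssim \vert x\vert^{-2}$, which gives a Hardy-type operator $\sigma(x)^{-2}\int_{\vert y\vert<\vert x\vert/2}\vert X\vert$.
\item[(ii)] On $\vert y\vert>2\vert x\vert$, the kernel is $\lesssim\vert y\vert^{-2}$, which gives the dual Hardy operator $\int_{\vert y\vert>2\vert x\vert}\vert X\vert\,\vert y\vert^{-2}\,dy$.
\item[(iii)] On the comparable region $\vert y\vert\sim\vert x\vert$, the operator is the local fractional integral $I_1$ restricted to dyadic annuli.
\end{itemize}
For (i) and (ii), the weighted Hardy inequalities hold exactly in the range $-2<\delta<0$. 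The endpoint $\delta=0$ fails for (i) and $\delta=-2$ fails for (ii), and we will check these exponent conditions by a direct computation with the power weights. For (iii), on each annulus $I_1$ maps $L^q$ into $L^q$ with norm $\lesssim R$, where $R$ is the annulus scale. This factor $R$ matches the shift from weight $\delta-1$ to $\delta$, so summing over dyadic annuli gives the bound. Near the origin, where $\sigma\sim1$, we use ordinary local estimates.

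It remains to prove uniqueness in $W^{1,q}_\delta$. If $w$ is a weak solution with $X=0$, then $w$ is harmonic on $\R^3$, hence smooth, and $w\in L^q_\delta$ with $\delta<0$. The mean value property over balls $B(x,R)$ together with H\"older's inequality in the weighted norm gives
\[
\vert w(x)\vert\le CR^{\delta}\|w\|_{L^q_\delta}\to0 \quad\text{as } R\to\infty,
\]
so $w\equiv0$. Existence for general $X$ then follows by approximating $X$ by $X_n\in C^\infty_c$. The solutions $w_n$ are Cauchy in $W^{1,q}_\delta$, and the weak equation passes to the limit because test functions have compact support.
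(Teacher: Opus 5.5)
Your argument is correct, but it takes a genuinely different route from the paper's. The paper does no harmonic analysis by hand. It quotes Farwig--Sohr \cite{farwig1997weighted} (Theorem 4.2(i)) for a solution in the homogeneous space $\dot W^{1,q}_\delta$ with $\|\grad w\|_{L^q_{\delta-1}}\le C\|X\|_{L^q_{\delta-1}}$, unique up to an additive constant. It then applies their weighted Poincar\'e inequality for the $A_q$ weight $\sigma^{-q\delta-3}$ (Corollary 3.7(i)): exactly one choice of constant puts $w$ in $L^q_\delta$, with $\|w\|_{L^q_\delta}\le C\|\grad w\|_{L^q_{\delta-1}}$.

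You instead write down the Newtonian potential and get the gradient bound from weighted Calder\'on--Zygmund theory, which rests on the same $A_q$ fact the paper uses. You then prove what is essentially an inhomogeneous Stein--Weiss inequality for $I_1$ by a dyadic Hardy decomposition, and get uniqueness from the mean value property.

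\emph{What each route buys.} Yours is self-contained and gives an explicit formula. It also shows why no constant needs to be subtracted when $\delta<0$: the potential converges absolutely and decays. The paper's route is shorter and, more importantly, is uniform with the case $0<\delta<1$ (Proposition \ref{weight-estimate-II}). There $\grad\Gamma * X$ need not converge at all, so one must work modulo constants and normalize using Bartnik's Poincar\'e inequality \cite{bartnik1986mass}. Your convolution formula would need a renormalized kernel in that range.

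\emph{A bookkeeping slip.} You have the two endpoints swapped. Let $a_j$ denote the $L^q_{\delta-1}$ norm of $X$ on the $j$-th dyadic annulus.
\begin{itemize}
\item The near-origin piece (i) contributes $\sum_{j<k}2^{-(k-j)(2+\delta)}a_j$ to the $L^q_\delta$ norm on the $k$-th annulus, so it needs $\delta>-2$. Indeed, a compactly supported $X$ produces $w\sim\vert x\vert^{-2}$, which lies in $L^q_\delta$ only when $\delta>-2$.
\item The far piece (ii) contributes $\sum_{j>k}2^{(j-k)\delta}a_j$, so it needs $\delta<0$.
\end{itemize}
Both conditions hold on $-2<\delta<0$, so the argument itself is unaffected.
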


\begin{proof}
    According to \cite[Theorem 4.2(i)]{farwig1997weighted}, there is a weak solution $w\in \dot W^{1,q}_\delta(\R^3)$ to $\lap w = \div(X)$. Moreover, $w$ is unique up to an additive constant $c$ and 
    \[
    \|\grad w\|_{L^q_{\delta-1}} \le C \|X\|_{L^q_{\delta-1}}. 
    \]
    Since $-2 < \delta < 0$, the weight $\sigma^{-q\delta - 3}$ belongs to $A_q$. Hence we can apply the Poincare-type inequality \cite[Corollary 3.7(i)]{farwig1997weighted} with $r = q$ to deduce that there is a unique choice of the constant $c$ for which $w\in L^q_\delta(\R^3)$ with an estimate 
    \[
    \|w\|_{L^q_\delta} \le C \|\grad w\|_{L^q_{\delta-1}}.
    \]
    This $w$ is as required. 
\end{proof}

To handle the case of $0 < \delta < 1$, we need to use Bartnik's Poincare inequality. 

\begin{prop}
Fix some $0 < \delta < 1$. Then for any $w$ we have an estimate 
\[
\inf_{c\in \R} \|w - c\|_{L^q_\delta} \le C \|\grad w\|_{L^q_{\delta-1}}.
\]
In fact, the estimate holds for $c = \frac{1}{\vert B_2\vert}\int_{B_2} w\, dx$.
\end{prop}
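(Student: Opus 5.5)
The statement to prove is a weighted Poincaré inequality for $0<\delta<1$:
\[
\inf_{c}\|w-c\|_{L^q_\delta}\le C\|\grad w\|_{L^q_{\delta-1}},
\]
and in fact with $c=\bar w_{B_2}$. The plan is to reduce to a dyadic decomposition of $\R^3$ and telescope averages outward from $B_2$. For $0<\delta<1$ the weight $\sigma^{-q\delta-3}$ makes constants integrable, while $\sigma^{-q(\delta-1)-3}$ on the gradient has the wrong sign for the argument used when $\delta<0$. Growth of $w$ of order $|x|^{\delta}$ is therefore allowed, and the estimate should come from integrating the gradient outward from the origin. This is the same mechanism as Bartnik's inequality \cite[Theorem 1.3]{bartnik1986mass}. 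The natural move is simply to cite Bartnik after translating his norms into the present conventions, but I will also sketch a direct proof.

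\textbf{Step 1: reduction to annuli.} Let $\an_k=B_{2^{k+1}}-B_{2^{k}}$ for $k\ge1$, and set $\an_0=B_2$. Write $w_k$ for the average of $w$ over $\an_k$. The usual Poincaré inequality on $\an_k$, rescaled from the unit annulus, gives
\[
\int_{\an_k}|w-w_k|^q\,dx\le C2^{kq}\int_{\an_k}|\grad w|^q\,dx.
\]
Multiplying by $2^{k(-q\delta-3)}\approx\sigma^{-q\delta-3}$ turns this into
\[
\|w-w_k\|^q_{L^q_\delta(\an_k)}\le C\|\grad w\|^q_{L^q_{\delta-1}(\an_k)},
\]
since $2^{kq}\cdot2^{k(-q\delta-3)}=2^{k(-q(\delta-1)-3)}$.

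\textbf{Step 2: control of the averages.} Apply Poincaré on $\an_k\cup\an_{k+1}$ to get
\[
|w_{k+1}-w_k|\le C2^{k}\Big(2^{-3k}\int_{\an_k\cup\an_{k+1}}|\grad w|^q\Big)^{1/q}=:C2^{k\delta}a_k,
\]
where $a_k^q\approx\|\grad w\|^q_{L^q_{\delta-1}(\an_k\cup\an_{k+1})}$. Telescoping from $w_0=\bar w_{B_2}$ gives
\[
|w_k-w_0|\le C\sum_{j<k}2^{j\delta}a_j .
\]

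\textbf{Step 3: summing the pieces.} The remaining task is to bound
\[
\sum_k 2^{-k\delta q}|w_k-w_0|^q\lesssim\sum_k\Big(\sum_{j<k}2^{(j-k)\delta}a_j\Big)^q .
\]
This is a discrete Hardy inequality. Because $\delta>0$, the kernel $2^{(j-k)\delta}$ for $j<k$ is summable in $k-j$, so Young's inequality for sequences gives the bound $C\sum_j a_j^q\le C\|\grad w\|^q_{L^q_{\delta-1}}$. Combining this with Step 1 via
\[
\|w-w_0\|_{L^q_\delta}^q\lesssim\sum_k\Big(\|w-w_k\|^q_{L^q_\delta(\an_k)}+2^{-k\delta q}|w_k-w_0|^q\Big)
\]
yields the claim with $c=\bar w_{B_2}$.

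I expect Step 3 to be the main obstacle. It is exactly where $\delta>0$ is needed: for $\delta\le0$ the geometric sum diverges in this direction, and one must instead telescope inward from infinity. I also need to check that the overlapping annuli in Step 2 only contribute each $\int_{\an_k}|\grad w|^q$ a bounded number of times.
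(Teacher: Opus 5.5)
Your proof is correct, but it takes a different route from the paper's.

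\textbf{The paper's route.} The paper does not discretize. With $c=\frac{1}{|B_2|}\int_{B_2}w\,dx$, it multiplies $w-c$ by $1-\eta$, where $\eta\equiv 1$ on $B_1$ and $\eta\equiv 0$ outside $B_2$. This cutoff step is what places the function in the setting of Bartnik's continuous Hardy-type inequality \cite[Theorem 1.3(i)]{bartnik1986mass}, which the paper applies with $|x|$-weights on the exterior region $\R^3-B_1$. After converting $|x|$ to $\sigma$, two terms remain: the cutoff error $\|w-c\|_{L^q(B_2)}$ and the contribution of $B_2$ itself. Both are absorbed by the ordinary Poincar\'e inequality on $B_2$. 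So simply citing Bartnik after translating norms is not quite enough; the cutoff is needed.

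\textbf{Your route.} You use only the rescaled unweighted Poincar\'e inequality on a fixed model annulus. You then telescope the dyadic averages outward from $w_0=\bar w_{B_2}$. You close with the discrete Hardy bound
\[
\Big\|\sum_{m\ge 1}2^{-m\delta}a_{k-m}\Big\|_{\ell^q}\le \frac{2^{-\delta}}{1-2^{-\delta}}\,\|a\|_{\ell^q},
\]
which follows from Minkowski/Young. In effect you reprove Bartnik's inequality in dyadic form.

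\textbf{Comparison.} The paper's argument is shorter given the citation. Yours is self-contained and works directly with $\sigma$-weights. It also isolates exactly where $\delta>0$ enters, namely $\sum_{m\ge1}2^{-m\delta}<\infty$. It would transfer verbatim to any setting with uniform Poincar\'e constants on dyadic annuli, such as an asymptotically flat manifold. Neither argument uses $\delta<1$.

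The bounded-overlap point you flag is immediate: each $\an_k$ lies in at most two of the sets $\an_j\cup\an_{j+1}$.
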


\begin{proof}
    Choose a smooth cutoff function $\eta$ such that $\eta\equiv 1$ on $B(0,1)$ and $\eta\equiv 0$ outside $B(0,2)$. Define 
    \[
    c = \frac{1}{\vert B_2\vert}\int_{B_2} w\, dx. 
    \]
    We can apply Bartnik's Poincare inequality \cite[Theorem 1.3(i)]{bartnik1986mass} to deduce that 
    \begin{align*} 
    &\int_{\R^3-B_1} \vert(1-\eta)(w-c)\vert^q \vert x\vert^{-q\delta-3}\, dx \\
    &\qquad \le \delta^{-1} \int_{\R^3-B_1} \vert \grad ((1-\eta)(w-c))\vert^q \vert x\vert^{-q(\delta-1)-3}\, dx. 
    \end{align*}
    This implies that
    \begin{align*} 
    &\int_{\R^3-B_1} \vert(1-\eta)(w-c)\vert^q \sigma^{-q\delta-3}\, dx &\\&\qquad \le C \int_{\R^3} \vert \grad ((1-\eta) (w-c))\vert^q \sigma^{-q(\delta-1)-3}\, dx\\
    &\qquad \le C \int_{\R^3} \vert \grad w\vert^q \sigma^{-q(\delta-1)-3}\, dx + C \int_{B_2} \vert w-c\vert^q \,dx.  
    \end{align*}
    for a constant $C >0$ that does not depend on $w$. Hence by the ordinary Poincare inequality on $B_2$, we deduce that 
    \[
    \int_{\R^3 - B_2} \vert w-c\vert^q \sigma^{-q\delta-3}\, dx \le C \|\grad w\|_{L^q_{\delta-1}(\R^3)}^q + C \|\grad w\|_{L^q(B_2)}^q \le C \|\grad w\|_{L^q_{\delta-1}(\R^3)}^q. 
    \]
    By the ordinary Poincare inequality again, we have 
    \[
    \int_{B_2} \vert w-c\vert^q \sigma^{-q\delta-3}\, dx \le C \int_{B_2} \vert w-c\vert^q\, dx \le C \|\grad w\|_{L^q(B_2)}^q \le C \|\grad w\|_{L^q_{\delta-1}(\R^3)}^q.
    \]
    Adding the two estimates, it follows that 
    \[
    \|w - c\|_{L^q_\delta} \le C \|\grad w\|_{L^q_{\delta-1}}
    \]
    for $c = \frac{1}{\vert B_2\vert}\int_{B_2} w \, dx$. This proves the proposition. 
\end{proof}

This implies the analogous existence result in the range $0 < \delta < 1$. 

\begin{prop}
\label{weight-estimate-II}
Fix some $0 < \delta < 1$ and some $1 < q < \infty$. Assume that $X$ is a vector field in $L^q_{\delta-1}(\R^3)$. Then there is a unique weak solution to 
\[
\lap w = \div X,\quad  w\in W^{1,q}_{\operatorname{loc}}(\R^3) 
\]
satisfying the normalization
\[
\int_{B_2} w\, dx =0. 
\]
Moreover, $w$ satisfies the estimate 
\[
\|w\|_{W^{1,q}_\delta} \le C \|X\|_{L^q_{\delta-1}}
\]
where $C$ does not depend on $X$. 
\end{prop}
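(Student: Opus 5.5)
The plan is to follow the proof of Proposition \ref{weight-estimate-I}, but with the normalization and Poincaré inequality for $0<\delta<1$ replaced by the one just proved.

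\textbf{Existence of a gradient solution.} For $0<\delta<1$ the exponent $-q(\delta-1)-3$ lies in $(-3,3(q-1))$, so $\sigma^{-q(\delta-1)-3}$ is a Muckenhoupt $A_q$ weight. I will therefore invoke \cite[Theorem 4.2(i)]{farwig1997weighted} exactly as in Proposition \ref{weight-estimate-I}. It should produce a weak solution $w\in \dot W^{1,q}_\delta(\R^3)$ of $\lap w=\div X$, unique up to additive constants in that homogeneous space, with
\[
\|\grad w\|_{L^q_{\delta-1}}\le C\|X\|_{L^q_{\delta-1}}.
\]
The main point to check is that the cited theorem covers the full weight range $-2<\delta<1$, including positive $\delta$, and not only the range used in Proposition \ref{weight-estimate-I}. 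Since the theorem is stated for general $A_q$ weights, I expect this to be fine. Local $W^{1,q}$ regularity is then automatic, because $\sigma$ is bounded above and below on compact sets.

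\textbf{Normalization and the zeroth-order estimate.} Replace $w$ by $w-\frac{1}{\vert B_2\vert}\int_{B_2}w\,dx$, so that $\int_{B_2}w\,dx=0$. The preceding proposition, Bartnik's Poincaré inequality with this particular constant $c$, then gives
\[
\|w\|_{L^q_\delta}\le C\|\grad w\|_{L^q_{\delta-1}}\le C\|X\|_{L^q_{\delta-1}}.
\]
Adding the gradient bound yields $\|w\|_{W^{1,q}_\delta}\le C\|X\|_{L^q_{\delta-1}}$.

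\textbf{Uniqueness.} Suppose $w_1,w_2$ are two solutions with the stated normalization and finite $W^{1,q}_\delta$ norm; this is the class in which uniqueness is asserted. Their difference $h$ is Euclidean harmonic on $\R^3$ with $\grad h\in L^q_{\delta-1}$. The uniqueness part of the Farwig–Sohr theorem then shows $h$ is constant. Alternatively, one can argue directly: the mean-value inequality applied to $\grad h$ on balls $B_R$ gives
\[
\vert\grad h(x)\vert\lesssim R^{\delta-1}\|\grad h\|_{L^q_{\delta-1}},
\]
which tends to $0$ as $R\to\infty$ because $\delta<1$. Hence $\grad h\equiv 0$. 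The normalization $\int_{B_2}h\,dx=0$ then forces $h=0$.

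I expect the main obstacle to be only the bookkeeping: confirming that the cited existence and uniqueness result applies for $0<\delta<1$, where constants lie in $L^q_\delta$. That is precisely why the normalization over $B_2$ is needed instead of a decay condition.
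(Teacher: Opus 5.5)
Your proposal is correct and follows the paper's proof essentially verbatim. Both use Farwig--Sohr's Theorem 4.2(i) to get a gradient solution in $\dot W^{1,q}_\delta$, then normalize by the $B_2$-average, then apply the preceding Bartnik-type Poincaré inequality for the zeroth-order bound. You also say explicitly that uniqueness must be read within the class of solutions with finite $W^{1,q}_\delta$ norm, since adding $x_i$ preserves both the equation and the normalization. That remark, together with your direct Liouville argument for $\grad h$, is a useful clarification of what the paper leaves implicit in ``unique up to an additive constant.''
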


\begin{proof}
     By  \cite[Theorem 4.2(i)]{farwig1997weighted} again, there is a weak solution $w\in \dot W^{1,q}_\delta(\R^3)$ to $\lap w = \div(X)$ satisfying the estimate 
    \[
    \|\grad w\|_{L^q_{\delta-1}}\le C \|X\|_{L^q_{\delta-1}}.
    \]
    Moreover, $w$ is unique up to an additive constant $c$.  There is a unique choice of $c$ for which $w$ satisfies 
    \[
    \int_{B_2} w\, dx = 0
    \]
    and for this choice of $c$ we have 
    \[
    \|w\|_{L^q_\delta} \le C \|\grad w\|_{L^q_{\delta-1}}
    \]
    by the previous proposition.  Combining these estimates, the result follows. 
\end{proof}

\section{Relation with Burkhardt-Guim's Global Mass}
\label{Appendix-eta}

In this appendix, we investigate the relationship between the harmonic mass and Burkhardt-Guim's global $C^0$ mass. As a first step, we determine the precise form of the function $\eta = \eta_\psi$ in Theorem \ref{theorem-coordinates2}. 

\begin{prop}
    Let $\psi\not\equiv 0$ be a smooth non-negative bump function compactly supported in $(0,1)$. Then the function $\eta = \eta_\psi$ in Theorem \ref{theorem-coordinates2} has the explicit form 
    \[
    \eta(s) = \frac{1}{2s^3}\int_{\frac s 2 - 1}^{s-1} \psi(t)\, dt.
    \]
    In particular, $\eta$ is non-negative. 
\end{prop}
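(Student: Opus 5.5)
The plan is a direct computation from the definition of $\eta$ in the proof of Theorem \ref{theorem-coordinates2}. There $\eta(s) = -\frac12 s^{-5}\varphi(s^{-1}) + \frac14 s^{-1}\chi'(s)$, where $\chi'(s) = s^{-2}\int_0^s t^2\Theta(t)\,dt$ and $\Theta(t) = -6t^{-5}\varphi(t^{-1}) - 2t^{-6}\varphi'(t^{-1})$. The strategy is to evaluate the inner integral in closed form. We then observe that the $\varphi(s^{-1})$ term cancels, and finally rewrite what is left in terms of $\psi$ using $\varphi(q) = \theta(1,q) = q^{-3}\big[\frac12\psi(\frac{1}{2q}-1) - \psi(\frac1q - 1)\big]$.

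\textbf{Step 1.} In $\int_0^s t^2\Theta(t)\,dt$ I substitute $q = t^{-1}$, exactly as in the computation showing $\chi'$ has compact support. This gives
\[
\int_0^s t^2\Theta(t)\,dt = \int_{1/s}^\infty \big(-6q\varphi(q) - 2q^2\varphi'(q)\big)\,dq = \int_{1/s}^\infty \Big(-2q\varphi(q) - \tfrac{d}{dq}\big(2q^2\varphi(q)\big)\Big)\,dq .
\]
Since $\varphi$ has compact support, the exact derivative contributes the boundary term $2s^{-2}\varphi(s^{-1})$. Hence
\[
\int_0^s t^2\Theta\,dt = -2\int_{1/s}^\infty q\varphi(q)\,dq + 2s^{-2}\varphi(s^{-1}).
\]

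\textbf{Step 2.} Next I plug this into the formula for $\eta$. The term $\frac14 s^{-3}\cdot 2s^{-2}\varphi(s^{-1}) = \frac12 s^{-5}\varphi(s^{-1})$ exactly cancels the first term of $\eta$. What remains is
\[
\eta(s) = -\tfrac12 s^{-3}\int_{1/s}^\infty q\varphi(q)\,dq .
\]

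\textbf{Step 3.} To evaluate the remaining integral I substitute $t = q^{-1}$, as in the earlier verification that $\int_0^\infty q\varphi(q)\,dq = 0$. This gives
\[
\int_{1/s}^\infty q\varphi(q)\,dq = \int_0^s \Big[\tfrac12\psi\big(\tfrac t2 - 1\big) - \psi(t-1)\Big]\,dt = \int_{-1}^{s/2-1}\psi - \int_{-1}^{s-1}\psi = -\int_{s/2-1}^{s-1}\psi(t)\,dt .
\]
This yields the claimed formula $\eta(s) = \frac{1}{2s^3}\int_{s/2-1}^{s-1}\psi(t)\,dt$. Non-negativity is then immediate, since $\psi \ge 0$ and $\frac s2 - 1 < s-1$ for $s>0$.

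The only delicate point is the bookkeeping of boundary terms and signs in Step 1, in particular confirming that the boundary term at $q=\infty$ vanishes. It does, because $\varphi$ is supported in $(\frac14,1)$. The rest is routine substitution, and no new analytic input is needed.
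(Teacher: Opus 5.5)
Your proposal is correct and is essentially the paper's computation. The paper introduces $f(t)=\tfrac12\psi(\tfrac t2-1)-\psi(t-1)$, shows $\Theta(s)=2s^{-1}f'(s)$, and integrates by parts in $t$, while you integrate by parts in $q=t^{-1}$. This produces the same boundary term cancelling $-\tfrac12 s^{-5}\varphi(s^{-1})$ and the same final change of variables, since $\int_{1/s}^\infty q\varphi(q)\,dq=\int_0^s f(t)\,dt$.
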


\begin{proof}
    Inspecting the proof of Theorem \ref{theorem-coordinates2}, we have 
    \[
    \eta(s) = -\frac 1 2 s^{-5} \varphi(s^{-1}) + \frac 1 4 s^{-1}\chi'(s)
    \]
    where 
    \begin{gather*}
        \varphi(s) = s^{-3}\left[\frac 1 2 \psi\left(\frac{1}{2s}-1\right)-\psi\left(\frac 1 s -1\right)\right],\\
        \chi'(s) = s^{-2}\left[\int_0^s t^2 \Theta(t)\, dt\right],\\
        \Theta(s) = -6s^{-5}\varphi\left(\frac 1 s\right) - 2s^{-6}\varphi'\left(\frac 1 s\right) . 
    \end{gather*}
    It is convenient to introduce another function 
    \[
    f(t) = \frac 1 2 \psi\left(\frac t 2 - 1\right) - \psi\left(t-1\right)
    \]
    so that 
    \[
\varphi\left(\frac 1 s\right) = s^3 f(s). 
    \]
    We compute that 
    \[
    \varphi'\left(\frac 1 s\right)=-s^2 \frac{d}{ds}\left[\varphi\left(\frac 1 s\right)\right] = -s^2\frac{d}{ds}[s^3f(s)] = -3s^4 f(s)- s^5f'(s).
    \]
    Therefore we have 
    \begin{align*}
    \Theta(s) &= -6s^{-2}f(s) -2s^{-6}(-3s^4f(s)-s^5f'(s))= 2 s^{-1} f'(s). 
\end{align*}
We then calculate that 
\[
\chi'(s) = 2s^{-2}\int_0^s tf'(t)\, dt = 2s^{-2}\left[ sf(s) - \int_0^sf(t)\,dt\right].
\]
Making these substitutions into the formula for $\eta$, we see that 
\[
\eta(s) = -\frac 1 2 s^{-3}\int_0^s f(t)\, dt. 
\]
Finally, by suitable changes of variables, we have 
\[
\int_0^s f(t)\, dt = \int_0^s \frac 1 2 \psi\left(\frac t 2 -1\right) -\psi(t-1)\, dt = -\int^{ s -1}_{\frac s 2-1}\psi(t)\, dt
\]
Thus we have 
\[
\eta(s) = \frac{1}{2s^3}\int_{\frac s 2 - 1}^{s-1} \psi(t)\, dt,
\]
as claimed. Since $\psi$ is non-negative, it immediately follows that $\eta$ is non-negative.  
\end{proof}

Next, we recall the definition of Burkhardt-Guim's family $\eta_{\theta}$. We will continue to work with our convention of using the interval $(1,4)$ instead of the interval $(0.9,1.1)$. By \cite[Lemma 4.1]{burkhardt2024adm}, for small $0 < \theta < \theta_0$ there is a function $\eta_\theta\colon (0,\infty) \times [0,\infty) \to \R$ satisfying 
\begin{equation}
\label{eta-ode}
\begin{cases}
    \bd_t \eta_\theta(\ell,t) = -\bd_{\ell \ell} \eta_\theta(\ell,t) - \frac 2 \ell \bd_\ell \eta_\theta(\ell,t)+ \frac 2 {\ell^2} \eta_\theta(\ell,t),\\
    \eta_\theta(\ell,\theta) = \eta(\ell). 
\end{cases}
\end{equation}
Fix some $0 < \alpha < 2\tau - 1$ and define $\eta^r(\ell) = \eta_{r^{-\alpha}}(\ell,0)$. Then Burkhardt-Guim's global mass is defined by
\[
M_{C^0}(g) = \lim_{r\to \infty} M_{C^0}(g,\text{Id},\eta^r,r). 
\]
We want to show that for fast decay $\tau > \frac 2 3$, we have 
\[
\lim_{r\to \infty} M_{C^0}(g,\text{Id},\eta^r,r) = \lim_{r\to \infty} M_{C^0}(g,\text{Id},\eta,r).
\]
Then by Theorem \ref{theorem-coordinates}, it follows that $M_{C^0}(g)$ coincides with the harmonic mass $m(g)$.  Since non-negative scalar curvature in the Ricci flow sense implies non-negative scalar curvature in the sense of approximations \cite[Lemma 7.2]{burkhardt2019pointwise}, this answers \cite[Question 2]{burkhardt2024adm} for metrics on $\R^3$ with fast decay $\tau > \frac 2 3$. 

\begin{prop} Assume that $\tau > \frac 2 3$. Then 
\[
\lim_{r\to \infty} M_{C^0}(g,\operatorname{Id},\eta^r,r) = \lim_{r\to \infty} M_{C^0}(g,\operatorname{Id},\eta,r).
\]
\end{prop}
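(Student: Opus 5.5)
The plan is to show that the two local masses differ by a quantity that tends to zero. The argument uses two ingredients: the linearity of $M_{C^0}(g,\operatorname{Id},\cdot,r)$ in the weight function, and a quantitative estimate on $\eta^r-\eta$ coming from the backward equation \eqref{eta-ode}.

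\textbf{Step 1: reduce to a bound on $\eta^r-\eta$.} Write $\zeta_r=\eta^r-\eta$. The bracketed expression in Theorem \ref{theorem-coordinates} is linear in $\eta$ and involves only $\eta$ and $\eta'$. With $|g_{ij}-\delta_{ij}|\le C|x|^{-\tau}$ on $\an(r)$, the estimate used in the proof of the $o(|x|^{-1})$ proposition gives
\[
|M_{C^0}(g,\operatorname{Id},\eta^r,r)-M_{C^0}(g,\operatorname{Id},\eta,r)|\le C r^{1-\tau}\big(\|\zeta_r\|_{C^0}+\|\zeta_r'\|_{C^0}\big),
\]
where the norms are taken on $[1,4]$, or on the shifted annulus used by Burkhardt-Guim. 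One caveat: $\eta^r$ need not be compactly supported in $(1,4)$, since the backward evolution spreads support. I will therefore either use the decay estimates from \cite[Lemma 4.1]{burkhardt2024adm} on a slightly larger annulus, or absorb the tails using the weighted bounds on $\eta_\theta$ from that lemma. It then suffices to show
\[
\|\zeta_r\|_{C^1}=O(r^{-\alpha}).
\]
Given this, the difference is $O(r^{1-\tau-\alpha})$, which tends to zero as soon as $\alpha>1-\tau$. Since $\alpha$ may be chosen anywhere in $(0,2\tau-1)$, a valid choice exists exactly when $1-\tau<2\tau-1$, that is, when $\tau>\tfrac23$. This explains the hypothesis. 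If the global mass is required to be independent of $\alpha$, I will also cite Burkhardt-Guim's result to that effect.

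\textbf{Step 2: the estimate $\|\eta_\theta(\cdot,0)-\eta\|_{C^1}=O(\theta)$.} The operator $\mathcal L\eta=\eta''+\frac2\ell\eta'-\frac2{\ell^2}\eta$ is the radial part of the Euclidean Laplacian acting on the $\ell=1$ spherical-harmonic mode. Equation \eqref{eta-ode} reads $\bd_t\eta_\theta=-\mathcal L\eta_\theta$ with data $\eta$ prescribed at time $t=\theta$, so $\eta_\theta(\cdot,0)$ is obtained by solving a forward heat equation for time $\theta$ starting from $\eta$. Hence
\[
\eta_\theta(\cdot,0)-\eta=\int_0^\theta \mathcal L\,\eta_\theta(\cdot,s)\,ds,
\]
and it suffices to have uniform $C^3$ bounds on $\eta_\theta(\cdot,s)$ for $s\in[0,\theta]$. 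These follow from standard parabolic (Schauder) estimates for the regular operator $\mathcal L$ on $\ell\in[1/2,8]$, together with the fact that $\eta$ is smooth and compactly supported. I expect \cite[Lemma 4.1]{burkhardt2024adm} to already contain bounds of this type, since the heat kernel there is explicit: it is the $\R^3$ heat kernel projected onto the first harmonic. Taking $\theta=r^{-\alpha}$ yields Step 1's requirement.

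\textbf{Main obstacle.} The delicate part is controlling $\eta^r$ outside the annulus. Because $\eta^r$ is not compactly supported, the local mass integrand has tails over all $\ell\in(0,\infty)$. These tails must contribute $o(1)$ after multiplication by the $r^{1-\tau}$ factor, including near $\ell\to0$, where the metric is only bounded rather than decaying. I will handle the region $\ell\ge 8$ and the region $\ell\le 1/2$ with Gaussian off-diagonal heat kernel bounds, which make $\zeta_r$ and its derivative there $O(e^{-c/\theta})=O(e^{-cr^{\alpha}})$. Near $\ell\to0$ I combine this with the regularity of the $\ell=1$ mode, under which solutions behave like $\ell$. This exponential decay is more than enough to beat any polynomial factor.
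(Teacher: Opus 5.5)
Your proposal is correct and follows the paper's proof. You bound the difference of the local masses by $Cr^{1-\tau}\|\eta^r-\eta\|_{C^1([1,4])}$, obtain $\|\eta^r-\eta\|_{C^1([1,4])}=O(r^{-\alpha})$ from the fact that $\eta_\theta(\cdot,0)$ is the forward evolution of $\eta$ for time $\theta$, and take $\alpha$ close to $2\tau-1$. For the $O(\theta)$ step, the paper gets smoothness up to $t=0$ by writing this evolution as $\bd_\ell\tilde u$ for a radial solution $u$ of the heat equation on $\R^3$, which is the same as your $\ell=1$-mode heat kernel picture.

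The tail analysis you call the main obstacle is not needed. The local mass is an integral over the annulus $\an(r)$, so only $\eta^r$ restricted to $[1,4]$ enters.
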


\begin{proof}
    Let $\an(r) = B(0,4r)-B(0,r)$. From the definition of Burkhardt-Guim's $C^0$ local mass \cite[Definition 2.1]{burkhardt2024adm}, it is easy to check that 
    \begin{align*}
        &\vert M_{C^0}(g,\text{Id},\eta^r,r)-M_{C^0}(g,\text{Id},\eta,r)\vert \le  C r \|\eta^r-\eta\|_{C^1([1,4])}  \|g_{ij}-\delta_{ij}\|_{L^\infty(\an(r))}.
    \end{align*}
    Hence, to prove the proposition, it suffices to show that $r^{1-\tau}\|\eta^r-\eta\|_{C^1([1,4])}\to 0$ as $r\to \infty$. It remains to estimate $\|\eta^r - \eta\|_{C^1([1,4])}$. 

    For convenience, define $f_\theta(\ell,t) = \eta_\theta(\ell,\theta-t)$.  
In fact, $f_\theta$ is then independent of $\theta$ and we denote it simply by $f$. It solves the forward parabolic equation 
\[
\begin{cases}
    \bd_t f(\ell,t) = \bd_{\ell \ell} f(\ell,t) + \frac 2 \ell \bd_\ell f(\ell,t)- \frac 2 {\ell^2} f(\ell,t),\\
    f(\ell,0) = \eta(\ell).
\end{cases}
    \]
    In fact, from the proof of \cite[Lemma 4.1]{burkhardt2024adm}, we see that $f(\ell,t) = \bd_\ell \tilde u(\ell,t)$ where $\tilde u(\vert x\vert,t) = u(x,t)$ and $u$ solves the heat equation on $\R^3$ with smooth radially symmetric initial data which is constant outside of a compact set. In particular, $f$ is smooth on $(0,\infty)\times [0,\theta_0]$. It now follows easily from the smoothness that there is a constant $C$ such that 
    \[
    \|f(\cdot,t)-f(\cdot,0)\|_{C^1([1,4])} \le Ct. 
    \]
    Thus we have 
    \[
    \|\eta_\theta(\cdot,0)-\eta(\cdot)\|_{C^1([1,4])} \le C\theta, 
    \]
    and it follows that 
    \[
    \|\eta^r - \eta\|_{C^1([1,4])} = \|\eta_{r^{-\alpha}}(\cdot,0)-\eta(\cdot)\|_{C^1([1,4])} \le C r^{-\alpha}. 
    \]
    Hence we obtain 
    \[
    r^{1-\tau} \|\eta^r-\eta\|_{C^1([1,4])} \le C r^{1-\tau - \alpha}. 
    \]
    Choosing $\alpha$ very close to $2\tau - 1$ and using $\tau > \frac 2 3$, we can arrange that $1 - \tau - \alpha < 0$ and hence this goes to $0$ as $r\to \infty$.
    \end{proof} 

    As explained above, when combined with the results of this paper, this has the following corollary. 

    \begin{corollary}
        Assume $g$ is a continuous asymptotically flat metric on $\R^3$ of decay rate $\tau > \frac 2 3$. If $g$ has non-negative scalar curvature in the sense of Ricci flow then 
        \[
        \lim_{r\to\infty} M_{C^0}(g,\operatorname{Id},\eta^r,r) \ge 0. 
        \]
    \end{corollary}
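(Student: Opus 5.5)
The corollary should follow by chaining three results already in the paper. First, Ricci-flow non-negativity gives approximability: by \cite[Lemma 7.2]{burkhardt2019pointwise}, a continuous metric with non-negative scalar curvature in the Ricci flow sense can be approximated uniformly on compact sets by smooth metrics with scalar curvature at least $-\eps_k$, where $\eps_k\to 0$. I will check that this produces, for each $r>0$, a sequence of metrics on $B(0,r)$ satisfying conditions (i) and (ii) of Definition \ref{defn-non-negative-scalar}. The natural candidates are the Ricci flow evolutions $g(t_k)$ with $t_k\to 0$, restricted to $B(0,r)$. Hence $g$ is a continuous asymptotically flat metric with non-negative scalar curvature in the sense of approximations.

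Second, the positive mass theorem applies. Theorem \ref{main-theorem}, which rests on monotonicity (Theorem \ref{theorem-monotonicity}) and on $\lim_{r\to0} rD(r)=0$, gives $m(g)\in[0,\infty]$. Theorem \ref{theorem-coordinates2} then identifies $m(g)$, up to the positive constant $3\pi\int_0^1\psi(s)(1+s)^{-2}\,ds$, with $\lim_{r\to\infty}$ of the bracketed coordinate expression. Here $\eta=\eta_\psi$, whose explicit form was computed at the start of this appendix and is non-negative. Up to positive normalization and the shift of annulus, that bracket is exactly $M_{C^0}(g,\operatorname{Id},\eta,r)$. So $\lim_{r\to\infty}M_{C^0}(g,\operatorname{Id},\eta,r)$ exists in $[0,\infty]$.

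Third, the preceding proposition, which uses $\tau>\frac23$, shows that the limit with the fixed $\eta$ equals the limit with the family $\eta^r$. This gives $\lim_{r\to\infty}M_{C^0}(g,\operatorname{Id},\eta^r,r)\ge 0$.

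The main point needing care is bookkeeping rather than new analysis, and it comes in two parts.

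\begin{itemize}
\item The $\eta$ used in Burkhardt-Guim's definition must be the particular $\eta_\psi$ from Theorem \ref{theorem-coordinates2}, possibly rescaled, so that the identification of normalizations is legitimate. If Burkhardt-Guim's mass is defined with her own $\eta$, I would invoke her result that $M_{C^0}(g)$ is independent of the choice of admissible $\eta$ \cite{burkhardt2024adm}, and then take $\eta=\eta_\psi$.
\item The preceding proposition bounds the difference by $Cr^{1-\tau-\alpha}$. This bound is valid even when the limits are $+\infty$, so the two limits agree in $[0,\infty]$ as well, and positivity still holds in that case.
\end{itemize}
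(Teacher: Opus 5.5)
Your proposal is correct and follows the paper's own argument. Burkhardt-Guim's Lemma 7.2 turns Ricci-flow non-negativity into non-negative scalar curvature in the sense of approximations. Theorems \ref{main-theorem} and \ref{theorem-coordinates2} then give $\lim_{r\to\infty}M_{C^0}(g,\operatorname{Id},\eta,r)\in[0,\infty]$ for the non-negative $\eta=\eta_\psi$. Finally, the preceding proposition (which needs $\tau>\frac23$) swaps $\eta$ for $\eta^r$ in the limit.

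One small imprecision: the natural smooth approximants come from the Ricci--DeTurck flow, which converges to $g$ in $C^0$ in fixed coordinates, not from the Ricci flow itself. Since you rely on the cited lemma rather than constructing them, this does not affect the argument.
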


\bibliographystyle{plain}
\bibliography{bibliography}

\end{document}